\theoremstyle{definition}
\newtheorem{theorem}{Theorem}[section]
\newtheorem{lemma}[theorem]{Lemma}
\let\kron\otimes
\let\vec\bm
\newcommand{\oned}{\rm 1D}
\newcommand{\low}{\rm L}
\newcommand{\high}{\rm H}
\newcommand{\diff}[1]{\mathop{}\!{\mathrm{d}#1}} 
\newcommand{\diag}[1]{{\rm diag}\LRp{#1}} 
\newcommand{\td}[2]{\frac{{\rm d}#1}{{\rm d}{ {#2}}}} 
\newcommand{\pd}[2]{\frac{\partial#1}{\partial#2}}
\newcommand{\nor}[1]{\left\| #1 \right\|} 
\newcommand{\LRp}[1]{\left( #1 \right)} 
\newcommand{\LRs}[1]{\left[ #1 \right]} 
\newcommand{\LRb}[1]{\left| #1 \right|} 
\newcommand{\LRc}[1]{\left\{ #1 \right\}} 
\newcommand{\LRl}[1]{\left. #1 \right|} 
\newcommand{\jump}[1] {\ensuremath{\llbracket#1\rrbracket}} 
\newcommand{\fnt}[1]{\bm{\mathsf{ #1}}}
\newcommand{\rzero}[1]{{\color{black}{#1}}}
\newcommand{\rone}[1]{{\color{black}{#1}}}
\newcommand{\rtwo}[1]{{\color{black}{#1}}}
\newtheorem{remark}{Remark}
\title{A positivity preserving strategy for entropy stable discontinuous Galerkin discretizations of the compressible Euler and Navier-Stokes equations} 
\author[1]{Yimin Lin}
\author[1]{Jesse Chan}
\address[1]{Department of Computational and Applied Mathematics, Rice University, 6100 Main St, Houston, TX, 77005}
\author[2]{Ignacio Tomas}
\address[2]{Sandia National Laboratories\footnote{Sandia National Laboratories is a multimission laboratory managed and operated by National Technology \& Engineering Solutions of Sandia, LLC, a wholly-owned subsidiary of Honeywell International Inc., for the U.S. Department of Energy’s National Nuclear Security Administration under contract DE-NA0003525. This document describes objective technical results and analysis. Any subjective views or opinions that might be expressed in the paper do not necessarily represent the views of the U.S. Department of Energy or the United States Government}, P.O. Box 5800, MS 1320, Albuquerque, NM 87185-1320}
\begin{document}

\begin{abstract}
    High-order entropy-stable discontinuous Galerkin methods for the compressible Euler and Navier-Stokes equations require the positivity of thermodynamic quantities in order to guarantee their well-posedness. In this work, we introduce a positivity limiting strategy for entropy-stable discontinuous Galerkin discretizations constructed by blending high order solutions with \rzero{a low order positivity-preserving discretization}. \rtwo{The proposed low order discretization is semi-discretely entropy stable}, and the proposed limiting strategy is positivity preserving for the compressible Euler and Navier-Stokes equations. Numerical experiments confirm the high order accuracy and robustness of the proposed strategy. 
\end{abstract}

\maketitle

\section{Introduction}

Computational fluid dynamics simulations increasingly require higher resolutions for a variety of applications \cite{pi2013cfd}. For certain flows, high order accurate numerical methods are more accurate per degree of freedom compared to low order methods, and provide one avenue towards high accuracy while retaining reasonable efficiency \cite{wang2013high}. This paper focuses on high order discontinuous Galerkin (DG) methods, which are suitable for convection dominated problems and admit relatively simple and efficient implementations due to the locality of many operations \cite{hesthaven2007nodal}. 

Unfortunately, high-order DG methods typically suffer from stability issues in the presence of under-resolved solution features. High-order entropy stable DG (ESDG) discretizations of the compressible Euler and Navier-Stokes equations provide one method of addressing such stability issues while retaining high order accuracy. ESDG discretizations satisfy a semi-discrete balance of entropy and remain robust even in the absence of additional stabilization, filtering, or artificial viscosity \cite{chan2018discretely,chan2019skew,chan2022entropy,carpenter2014entropy,gassner2013skew,gassner2016split,chen2017entropy}. However, while entropy stable schemes require the positivity of thermodynamic quantities such as density and pressure, such schemes do not enforce positivity over the course of a simulation. This paper focuses on techniques to enforce positivity while retaining beneficial properties of high order ESDG schemes.

\rtwo{Positivity-preserving limiters have been widely employed for several decades \cite{lohner2008applied, berthon2008invariant}, and a variety of limited numerical schemes have been applied both to the compressible Euler \cite{kuzmin2010failsafe, calgaro2013positivity} and compressible Navier-Stokes equations \cite{kuzmin2012flux}. While a comprehensive review of limiting strategies is outside of the scope of this paper, it is worth reviewing several relevant limiting techniques to provide context for the strategy introduced in this paper. }

\rtwo{One popular class of limiting strategies are algebraic flux-correction (AFC) schemes (see for example Kuzmin et al.\ \cite{kuzmin2012flux, lohmann2016synchronized, lohmann2017flux, kuzmin2020subcell}). The underlying low-order method is a generalization of the local Lax-Friedrichs (LLF) method to nodal finite element discretizations. Guermond and Popov \cite{guermond2016invariant} showed that this low order scheme ensures preservation of invariant domains if the artificial ``graph viscosity'' coefficient is chosen to be sufficiently large. When combined with the convex-limiting framework (a generalization of algebraic flux correction) this robust low order discretization can be used to construct limited solutions which are second-order accurate and invariant domain preserving (implying positivity) \cite{guermond2018second, guermond2019invariant}. This framework has also been extended to the compressible Navier-Stokes equations \cite{guermond2021second} using an operator splitting approach and appropriate discretization of the parabolic terms.}

For high order DG schemes, Zhang, Shu, and colleagues introduced simple and effective positivity-preserving scaling limiters for systems of conservation laws \cite{zhang2010maximum, zhang2011maximum}. These limiters have been extended to advection-diffusion problems and the compressible Navier-Stokes equations as well \cite{zhang2010positivity, zhang2017positivity, srinivasan2018positivity, upperman2021first}, and are often paired with additional shock capturing techniques, such as TVD limiting or artificial viscosity.  

\rtwo{The limiting strategy used in this paper essentially combines an AFC-type low order scheme and a Zhang-Shu scaling limiter on each element.} This limiting strategy is similar to the use of sub-cell finite volume limiting schemes for high order DG methods \cite{dumbser2014posteriori, sonntag2014shock, vilar2019posteriori, hennemann2021provably, rueda2021subcell}, and most closely resembles the limiting approaches introduced in \cite{hennemann2021provably, rueda2021subcell}. Our approach generalizes these approaches to the compressible Navier-Stokes equations using techniques from \cite{zhang2017positivity}.
We also extend the construction of sparse summation-by-parts operators on quadrilateral elements in \cite{pazner2021sparse} to triangular elements. 

\rtwo{We briefly note some ways in which the limiting strategy introduced in this paper differs from strategies introduced in recent works such as \cite{guermond2021second, pazner2021sparse, hajduk2021monolithic, DZANIC2022111501}. First, we treat the parabolic terms fully explicitly, while \cite{guermond2021second, DZANIC2022111501} treat the parabolic terms using an implicit-explicit operator splitting. As a result, the spatial resolution of the method in this paper is limited by a parabolic CFL condition. However, numerical experiments suggest that this approach can still produce meaningful results when the viscous regime is under-resolved. Secondly, while a local minimum entropy principle is imposed in \cite{guermond2021second, pazner2021sparse, hajduk2021monolithic, DZANIC2022111501}, we impose limiting on the density and pressure. The motivation for this is that the minimum entropy principle is consistent with the compressible Euler equations but not the compressible Navier-Stokes equations. 
}

The outline of the paper is as follows: Section \hyperref[sec:background]{2} reviews the compressible Euler and Navier-Stokes equations and the notations used in the paper. Section \hyperref[sec:ops]{3} presents the discrete operators in our proposed discretizations, and Section \rzero{\hyperref[sec:esdg]{4}} reviews high order nodal entropy stable discontinuous Galerkin discretizations. In Section \hyperref[sec:low]{5}, we introduce the low order positivity preserving discretization. In Section \hyperref[sec:poslim]{6} we present \rzero{the elementwise limiting strategy}. In Section \hyperref[sec:time]{7}, we introduce the time discretization used, and in Section \hyperref[sec:exp]{8}, we provide various numerical experiments in 1D and 2D to verify the convergence and robustness of the proposed limiting strategy. Finally we summarize our work in Section \hyperref[sec:con]{9}. 


\section{Background knowledge}
\label{sec:background}
In this work, we focus on the compressible Euler and Navier-Stokes equations in two space-dimensions. The theoretical contributions of this paper are straightforward to extend to three dimensions. 
\subsection{Governing equations}
The two-dimensional compressible Navier-Stokes equations in conservative form are given by:
\begin{equation}\label{eq:CNS}
    \pd{\vec{u}}{t} + \sum\limits_{i=1}^2\pd{\vec{f}^\text{I}_i}{x_i} = \sum\limits_{i=1}^2 \pd{\vec{f}^\text{V}_i}{x_i},
\end{equation}
where $\vec{u}, \vec{f}^\text{I}_i, \vec{f}^\text{V}_i$ denote the vector of conservative variables, inviscid fluxes, and viscous fluxes respectively. We follow \cite{chan2014dpg} and write the nondimensional compressible Navier-Stokes equation in 2D as:
\begin{equation}\label{eq:CNSnondim}
     \pd{}{t}\begin{bmatrix}\rho \\ \rho u \\ \rho v \\ E\end{bmatrix} + \pd{}{x}\begin{bmatrix}\rho u \\ \rho u^2 + p \\ \rho u v \\ (E+p)u \end{bmatrix} + \pd{}{y}\begin{bmatrix} \rho v \\ \rho v u \\ \rho v^2 + p \\ (E+p)v\end{bmatrix} = \frac{1}{\text{Re}}\pd{}{x}\begin{bmatrix}0 \\ \tau_{xx} \\ \tau_{yx} \\ \tau_{xx}u + \tau_{yx}v + \kappa \pd{T}{x}\end{bmatrix} + \frac{1}{\text{Re}}\pd{}{y}\begin{bmatrix}0 \\ \tau_{xy} \\ \tau_{yy} \\ \tau_{xy}u + \tau_{yy}v + \kappa \pd{T}{y}\end{bmatrix}
\end{equation}
Here, $\rho,u,v,E$ denote the density, velocity in the $x,y$ directions, and total mechanical energy respectively. In this work, we assume an ideal gas closure, such that the pressure $p$ and the temperature $T$ are given by the equations of state: 
\begin{equation*}
    p = \LRp{\gamma - 1}\rho e, \qquad e = c_v T,
\end{equation*}
where $e$ is the specific internal energy, and $c_v, \gamma, \kappa, \mu$ are specific heat capacity, ratio of specific heats, heat conductivity, and dynamic viscosity respectively. $\text{Re}$, the Reynolds number, and $\text{Pr}$, the Prandtl number, are dimensionless quantities. The relation between the total energy and the specific internal energy is given by:
\begin{align*}
    E = \rho e + \frac{1}{2}\rho \LRp{u^2+v^2}
\end{align*}

The nondimensional compressible Navier-Stokes equations are equivalent to the conservative form \eqref{eq:CNS} through a scaling of physical parameters \cite{chan2014dpg}. In this work, we assume all parameters to refer to their nondimensionalized quantities. The compressible Euler equations are a special case of the compressible Navier-Stokes equations with $\text{Re} \rightarrow \infty$. In other words, the compressible Euler equations describe compressible fluids with zero viscosity and thermal conductivity. 

The admissible set of the compressible Euler and Navier-Stokes equations is the set of conservative variables with positive density and specific internal energy. We can write this admissible set as the intersection of superlevel sets of concave functions:
\begin{equation}\label{eq:admissbleset}
    \mathcal{A} \coloneqq \LRc{\vec{u} = \LRp{\rho, \rho u, E} \  | \ \rho(\vec{u}) > 0, \rho e(\vec{u}) > 0} = \bigcap\limits_{\rho_0 > 0, e_0 > 0} \LRc{\vec{u} \ | \ \rho(\vec{u}) \geq \rho_0}  \cap \LRc{\vec{u} \ | \ \rho e(\vec{u}) \geq \rho_0 e_0}.
\end{equation}
Physically meaningful solutions to \eqref{eq:CNS} lie in this admissible set, which is a convex set for both sets of equations \cite{zhang2017positivity}. \rone{From \cite{nishida1968global, chueh1977positively, lions1996existence, guermond2014viscous}, solutions to the compressible Euler equations lie in the admissible set under appropriate regularity assumptions. While we are not aware of similar theoretical results for solutions of the compressible Navier-Stokes equations, we follow works such as \cite{svard2016convergent, guermond2021second} which assume that such solutions are consistent with positivity of thermodynamic quantities.} The focus of this work is to limit discretely entropy-stable discontinuous Galerkin methods so that the limited solution at each time step remains in $\mathcal{A}$.

\subsection{Entropy variables and the symmetrization of viscous fluxes}
Both compressible Euler and Navier-Stokes equations admit a mathematical entropy balance with respect to a convex scalar mathematical entropy
\begin{equation*}
    \eta(\vec{u}) = - \frac{\rho s}{\gamma-1}
\end{equation*}
where $s = \log\LRp{\frac{p}{\rho^\gamma}}$ denotes the physical entropy \cite{hughes1986new}. Entropy variables are then defined as the derivative of the mathematical entropy with respect to the conservative variables. The mappings between the entropy variables $\vec{v}$ and the conservative variables $\vec{u}$ are given by
\begin{align*}
    \vec{v}\LRp{\vec{u}} &= \begin{bmatrix}v_1 & v_2 & v_3 & v_4\end{bmatrix} = \begin{bmatrix}\frac{\rho e (\gamma +1 -s)- E}{\rho e} & \frac{u}{e} & \frac{v}{e} & -\frac{1}{e}\end{bmatrix} \\
    \vec{u}\LRp{\vec{v}} &= \begin{bmatrix}-(\rho e)v_4 & \rho e v_2 & \rho e v_3 & \rho e \LRp{1-\frac{v_2^2+v_3^2}{2v_4}}\end{bmatrix}.
\end{align*}
The internal energy and physical entropy can also be expressed in terms of entropy variables 
\begin{equation*}
    \rho e = \LRp{\frac{\gamma-1}{(-v_4)^\gamma}}^{1/(\gamma-1)} e^{-\frac{s}{\gamma-1}}, \qquad s = \gamma - v_1 + \frac{v_2^2+v_3^2}{2v_4}. 
\end{equation*}
It was shown in \cite{hughes1986new} that the entropy variables symmetrize the viscous fluxes in the following sense
\begin{equation}\label{eq:entropysymm}
    \pd{\vec{f}^\text{V}_1}{x} + \pd{\vec{f}^\text{V}_2}{y} = \nabla \cdot \LRp{\vec{K}\nabla \vec{v}} = \pd{}{x}\LRp{\vec{K}_{11} \pd{\vec{v}}{x} + \vec{K}_{12} \pd{\vec{v}}{y}} + \pd{}{y}\LRp{\vec{K}_{21} \pd{\vec{v}}{x} + \vec{K}_{22} \pd{\vec{v}}{y}}, 
\end{equation}
where $\vec{K}_{ij}$ are blocks of a symmetric positive \rone{semi-}definite matrix $\vec{K}$ 
\begin{equation*}
    \vec{K} = \begin{bmatrix} \vec{K}_{11} & \vec{K}_{12} \\ \vec{K}_{21} & \vec{K}_{22} \end{bmatrix} 
\end{equation*}

\subsection{On notation}

We follow the notation convention introduced in \cite{chan2022entropy}. Vector and matrix quantities are denoted using lower and upper case bold fonts respectively (for example, $\vec{A}$ and $\vec{u}$). Spatially discrete quantities are written in bold sans serif font (for example, $\fnt{x}$). For clarity, continuous real functions evaluated over spatially discrete quantities are taken to mean point-wise evaluations. For example,
\begin{align*}
    \fnt{x} = \begin{bmatrix}\vec{x}_1 \\ \vdots \\ \vec{x}_n\end{bmatrix}, \qquad u: \mathbb{R} \rightarrow \mathbb{R}, \qquad u(\fnt{x}) = \begin{bmatrix} u(\vec{x}_1) \\ \vdots \\ u(\vec{x}_n)\end{bmatrix}
\end{align*}
For systems of conservation laws, there are multiple scalar components. When $\fnt{A} \in \mathbb{R}^{m\times m}, \fnt{u}\in \mathbb{R}^{mn}$, we abuse notation and adopt the convention that $\fnt{A}\fnt{u}$ is the Kronecker product $(\fnt{A}\kron \fnt{I}_n) \fnt{u}$ \cite{chen2017entropy}.

In this paper, we will adopt the lexicographical ordering of nodes and basis functions, so that a multi-index is replaced by a lexicographical single index for clarity of notation. 
We will use a number subscript $\fnt{A}_1,\fnt{A}_2$ or a letter subscript $\fnt{A}_r$,$\fnt{A}_s$ interchangeably to indicate the coordinates of discrete operators. This work will present the theory on the reference element $\widehat{D}$ and ignore the involving geometric terms for clarity of notation. We refer readers to \cite{chan2018discretely, chan2019discretely, crean2017high} for the extension of high order ESDG schemes to mapped elements and curved meshes. The extension of positivity preserving schemes to curved meshes follows the approach in  \cite{pazner2021sparse}, \rone{and is expanded on in more detail in Appendix~\ref{app:curved}}.

\section{Discrete operators}
\label{sec:ops}

We denote the computational domain by $\Omega \subseteq \mathbb{R}^2 $. We discretize the domain using non-overlapping quadrilateral or triangular elements $D^k$. We assume for now that each physical element is the images of the reference element $\widehat{D}$ through an affine mapping $\vec{\Phi}^k(\vec{r},\vec{s})$, such that geometric change-of-variable factors are constant on each element. The extension to curvilinear meshes \rtwo{is briefly described in Appendix~\ref{app:curved}.}

\subsection{Multidimensional Summation-By-Parts operators}

The construction of both the high and low order numerical schemes in this work relies on summation-by-parts (SBP) operators \cite{svard2014review, fernandez2014review}. In this work, we focus on diagonal-norm SBP operators, which can be interpreted as differentiation matrices weighted by diagonal mass matrices. Each SBP operator is induced by an appropriate volume quadrature rule $\LRp{\vec{r},\vec{w}}$. We assume these quadrature rules contain identically distributed surface points on each face, and that these surface points correspond to a separate surface quadrature rule $\LRp{\vec{r}^f,\vec{w}^f}$. We denote the number of collocated nodes by $N_{\rm p}$ and the number of surface quadrature points by $N_{\rm p}^{\rm f}$. Moreover, we require positivity of both volume and surface quadrature weights and assume that the volume and surface quadrature rules are exact for polynomials of degree $2N-1$ and $2N$ respectively. 

Figure \ref{fig:SBPnodes} illustrates the SBP quadrature points on a tensor product and a simplicial reference element. A degree $N$ SBP quadrature on tensor product elements is simply the tensor product of $(N+1)$-point Gauss-Lobatto quadratures, and the surface quadrature rule on simplicial elements is the $(N+1)$-point Gauss-Lobatto rule. 
\begin{figure}[htp] 
    \centering
    \subfloat[SBP nodes on the reference tensor product element, $N = 4$]{%
        \includegraphics[width=0.2\textwidth]{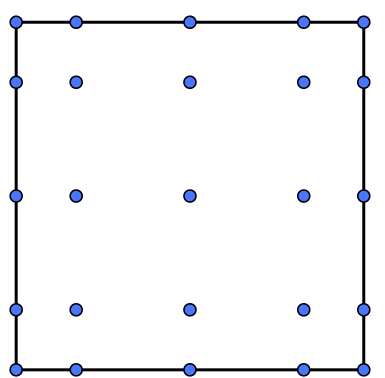}
        }%
    \hspace{2cm}
    \subfloat[SBP nodes on the reference simplex element, $N = 3$]{%
        \includegraphics[width=0.32\textwidth]{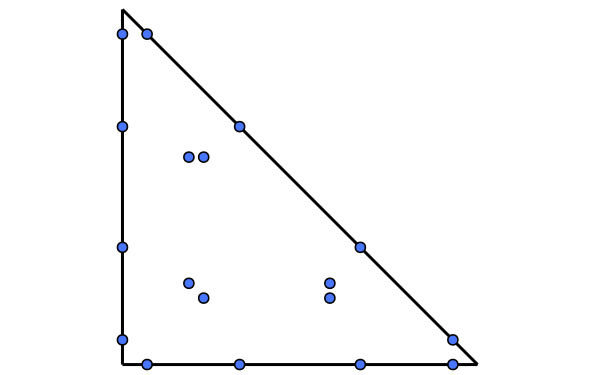}%
        }%
    \caption{SBP quadrature rules}\label{fig:SBPnodes}
\end{figure}

We can now introduce the relevant SBP matrix operators. First, we define the lumped mass matrix in terms of SBP quadrature weights:
\begin{equation*}
    \fnt{M} = \begin{bmatrix} \vec{w}_1 & & \\ & \ddots & \\ & & \vec{w}_{N_{\rm p}}\end{bmatrix}.
\end{equation*}
Next, we introduce SBP differentiation matrices by $\fnt{Q}_r, \fnt{Q}_s$ and nodal differentiation matrices $\fnt{D}_r = \fnt{M}^{-1}\fnt{Q}_r$,  $\fnt{D}_s = \fnt{M}^{-1}\fnt{Q}_s$. These differentiation matrices satisfy high order accuracy conditions: let $\fnt{u}$ denote the vector containing nodal values $u(\fnt{x}_i)$ for some degree $N$ polynomial $u(x)$. Then, $\fnt{D}_r, \fnt{D}_s$ satisfy
\begin{equation*}
     \LRp{\fnt{D}_r \fnt{u}}_{i} = \LRl{\pd{u}{r}}_{r = r_i}, \quad \LRp{\fnt{D}_s \fnt{u}}_{i} = \LRl{\pd{u}{s}}_{s = s_i}.
\end{equation*}
We note that these operators can be constructed directly using a Lagrange polynomial basis on quadrilateral elements. On triangular elements, we construct nodal SBP operators using Theorem 3.1 of \cite{wu2021high}.

A key property of SBP operators involves the relationship between the differentiation matrices and surface matrices. We first introduce $\fnt{E}$ as the extrapolation (or face extraction) matrix from the volume quadrature points to the surface quadrature points. Since we assume the SBP quadrature includes the surface quadrature points, the extrapolation matrix is a matrix of size $N_{\rm p}^{\rm f}\times N_{\rm p}$ with entries either $0$ or $1$. Next, we introduce boundary integration matrices of size $N_{\rm p}^{\rm f}\times N_{\rm p}^{\rm f}$
\begin{equation*}
    \fnt{B}_r = \begin{bmatrix} \vec{w}_1^{\rm f} \widehat{\vec{n}}_{r,1} & & \\ & \ddots & \\ & & \vec{w}_{N_{\rm p}^{\rm f}}\widehat{\vec{n}}_{r,N_{\rm p}^{\rm f}}\end{bmatrix},\qquad 
    \fnt{B}_s = \begin{bmatrix} \vec{w}_1^{\rm f} \widehat{\vec{n}}_{s,1} & & \\ & \ddots & \\ & & \vec{w}_{N_{\rm p}^{\rm f}}\widehat{\vec{n}}_{s,N_{\rm p}^{\rm f}}\end{bmatrix},
\end{equation*}
where $\widehat{\vec{n}}_r, \widehat{\vec{n}}_s$ are components of outward normal vectors of the reference element. Finally, we note that SBP operators $\fnt{Q}$ (e.g. the differentiation matrices weighted by the lumped mass matrix) satisfy the SBP property:
\begin{align}
    \fnt{Q}_r + \fnt{Q}_r^T = \fnt{E}^T \fnt{B}_r \fnt{E}, \qquad \fnt{Q}_s + \fnt{Q}_s^T = \fnt{E}^T \fnt{B}_s \fnt{E}.\label{eq:SBP}
\end{align}
The SBP property replicates integration by parts at a discrete level in the following way:
\begin{align}
    \fnt{u}^T(\fnt{Q}_r+\fnt{Q}_r^T)\fnt{v} = \fnt{u}^T \fnt{E}^T\fnt{B}_r\fnt{E} \fnt{v} \quad \Longleftrightarrow \quad \int_{\widehat{D}}{{u}\pd{{v}}{r}} + \int_{\widehat{D}}{\pd{{u}}{r}, {v}} = \int_{\partial \widehat{D}}{{u},{v} \widehat{{n}}_r}
\end{align}

\subsection{Sparse low order Summation-By-Parts operators}

Given a set of SBP quadrature points, we also wish to construct sparse low-order summation-by-parts operators $\fnt{Q}^{\low}$. Since we will construct a low-order scheme by applying an algebraic dissipation operator based on the sparsity pattern of our discrete operators, we seek operators which are sparse to avoid introducing unnecessary dissipation in the low-order algebraic scheme \rtwo{\hbox{\cite{kuzmin2020subcell,lohmann2017flux,hajduk2021monolithic,pazner2021sparse}}}. We require only that the operators satisfy an SBP property and a conservation property:
\begin{align}
    &\fnt{Q}_r^{\low} + \LRp{\fnt{Q}_r^{\low}}^T = \fnt{E}^T\fnt{B}_r \fnt{E},\qquad \fnt{Q}_r^{\low}\bm{1} = 0, \label{eq:SBPlowr}\\
    &\fnt{Q}_s^{\low} + \LRp{\fnt{Q}_s^{\low}}^T = \fnt{E}^T\fnt{B}_s \fnt{E},\qquad \fnt{Q}_s^{\low}\bm{1} = 0. \label{eq:SBPlows}
\end{align}

For tensor product elements, we follow \cite{pazner2021sparse} and construct sparse low order SBP operators are derived by integrating the piecewise linear basis $\vec{p}_i$ constructed on the LGL nodes
\begin{align*}
    \LRp{\fnt{Q}^{\low}_{\oned}}_{ij} = \int_{\widehat{D}} \vec{p}_i \pd{\vec{p}_j}{r} \diff{r}, \qquad \fnt{Q}^{\low}_{\oned} = \begin{bmatrix}-\frac{1}{2} & \frac{1}{2} & & \\-\frac{1}{2} & 0 & \frac{1}{2} & \\ & -\frac{1}{2} & 0 & \frac{1}{2} & \\ & & & \ddots\end{bmatrix}, \qquad \fnt{Q}^{\low}_r = \fnt{I}_{N_{\rm P}} \kron \fnt{Q}_{\oned}^{\low}, \qquad \fnt{Q}^{\low}_s = \fnt{Q}^{\low}_{\oned} \kron \fnt{I}_{N_{\rm P}}
\end{align*}

For simplicial elements, we follow \cite{wu2021entropy} and construct sparse operators based on a user-provided stencil (or a connectivity graph) built upon the SBP quadrature points. We motivate this procedure as follows. First, note that any SBP operator can be written as the sum of its skew-symmetric part and the boundary integration matrix:
\begin{align}
     &\fnt{Q}_r = \frac{\fnt{Q}_r - \fnt{Q}_r^T}{2} + \frac{1}{2}\fnt{E}^T \fnt{B}_r \fnt{E},\label{eq:SBPr}\\
    &\fnt{Q}_s = \frac{\fnt{Q}_s - \fnt{Q}_s^T}{2} +  \frac{1}{2}\fnt{E}^T \fnt{B}_s \fnt{E}.\label{eq:SBPs}   
\end{align}
Because the boundary integration matrices are fully determined by the surface quadrature rule, an SBP operator can be determined by specifying its skew-symmetric part. Thus, constructing a sparse SBP operator reduces to constructing a sparse skew-symmetric matrix which also satisfies the conservation properties (\ref{eq:SBPlowr}) and (\ref{eq:SBPlows}). 

To determine the sparsity pattern of the skew-symmetric part of the SBP matrix, we restrict the low order SBP operator to have the same sparsity pattern as the adjacency matrix of the graph. In this work, we define the adjacency matrix $\fnt{A}$ through a simple formula
\begin{align}
    \LRp{\fnt{A}}_{ij} = 
    \begin{cases}
    1 &\text{if } \nor{\vec{r}_i-\vec{r}_j}_2 \leq \alpha \max\LRc{\LRp{\frac{\vec{w}_i}{\pi}}^{\beta}, \LRp{\frac{\vec{w}_j}{\pi}}^\beta}\\
    0 &\text{otherwise}
    \end{cases}
\label{eq:adjacency}
\end{align}
where $\alpha,\beta \in \mathbb{R}$ are parameters. Note that when $\alpha = 1, \beta = \frac{1}{2}$, two quadrature points are considered ``adjacent'' if they both lie in the circle of area $\max\LRc{\vec{w}_i,\vec{w}_j}$ centered at either quadrature point. In one-space dimension this is equivalent to the notion of "three-point stencil': for each time step, the evolution of the solution at the current node depends only on its own value and the value of its immediate neighbors.

Finally, we can define the sparse low order SBP operator by assuming that nonzero entries of the skew-symmetric part $\fnt{S}_i = \frac{1}{2}\LRp{\fnt{Q}_i-\fnt{Q}_i^T}$ are of the form $\fnt{S}_{i,jk} = \vec{\psi}_{i,k} - \vec{\psi}_{i,j}$, where $\vec{\psi}$ is some ``potential'' vector \cite{trask2020conservative} and the subscript $i$ denotes one of the reference coordinates $r, s$. We compute the vector $\vec{\psi}_i$ by solving the constrained linear system:
\begin{align*}
    &\fnt{Q}_i^{\low} \bm{1} = 0 \\
    &\textrm{s.t.} \quad \fnt{S}_{i, jk} = \begin{cases}
    0 &\text{if } \fnt{L}_{jk} = 0\\
    \vec{\psi}_{i,k} - \vec{\psi}_{i,j} &\text{otherwise}
    \end{cases} .\\
    &\qquad \quad\vec{\psi}_i^T \bm{1} = 0
\end{align*}
where $\fnt{Q}_i^{\low} = \fnt{S}_i + \frac{1}{2} \fnt{E}^T \fnt{B}_i \fnt{E}$. 
It was observed in \cite{trask2020conservative} that this system reduces to
\begin{align*}
    \fnt{L} \vec{\psi} = \frac{1}{2} \fnt{E}^T \fnt{B}_i\fnt{E} \bm{1} \qquad\textrm{    s.t.    }\qquad \vec{\psi}_i^T \bm{1} = 0,
\end{align*}
where $\fnt{L}$ is the graph Laplacian
\[
\LRp{\fnt{L}}_{ij} = \begin{cases}
    \text{deg}\LRp{\vec{r}_i} &\text{if } i = j \\
    -1 &\text{if } \fnt{A}_{ij} \neq 0 \\
    0 &\text{otherwise}
    \end{cases}.
\]    
The constraint $\vec{\psi}_i^T \bm{1} = 0$ ensures the existence and uniqueness of the solution. We refer interested readers to \cite{wu2021entropy} for a more detailed description of this process.

\section{High order entropy-stable discontinuous Galerkin discretizations}
\label{sec:esdg}
In this section, we will review nodal entropy stable DG methods for the compressible Euler and Navier-Stokes equations. We first review entropy conservative numerical fluxes as introduced by Tadmor \cite{tadmor1987numerical,tadmor2003entropy}, which are fundamental to the construction of entropy stable DG schemes. Such fluxes are symmetric bivariate functions $\vec{f}_{S}(\vec{u}_L,\vec{u}_R)$ that are consistent with respect to a given flux $\vec{f}(\vec{u})$, i.e. $\vec{f}_{S}(\vec{u},\vec{u}) = \vec{f}(\vec{u})$. In addition, they satisfy an entropy conservation property 
\[
(\vec{v}_L-\vec{v}_R)^T \vec{f}_{S}(\vec{u}_L,\vec{u}_R) = \psi(\vec{u}_L)-\psi(\vec{u}_R),
\]
which relates the flux to the entropy variables and entropy potential $\psi(\vec{u})$. In this paper, we utilize numerical fluxes derived by Chandrashekar which are both entropy conservative and kinetic energy preserving \cite{chandrashekar2013kinetic}. 

The derivative of the inviscid flux $\vec{f}_i^{\rm I}$ can then be reformulated using the entropy conservative flux. The reformulation is commonly referred to as ``flux differencing'' and can be interpreted as a high-order subcell-based finite volume formulation. Gassner, Winters, Kopriva and Chan provide continuous interpretations of the technique \cite{gassner2018br1, chan2018discretely}
\begin{align*}
    \pd{\vec{f}_i^{\rm I}(\vec{u}(\vec{x}))}{x_i} = 2\LRl{ \pd{\vec{f}_{i,S}(\vec{u}(\vec{x}),\vec{u}(\vec{y}))}{x_i}}_{\vec{y}=\vec{x}}
\end{align*}
Then, we can discretize the variational form of the derivative of the inviscid flux using the high order weighted differentiation matrix and the the row sum of a Hadamard product
\begin{align}
    \int_{\widehat{D}} \pd{\vec{f}_k^{\rm I}(\vec{u})}{x_k} 
    \vv{\vec{l}}  \quad \xRightarrow{\rm Discretize} \quad 2\LRp{\fnt{Q}_k \circ \fnt{F}_{k}}\bm{1}, \quad \LRp{\fnt{F}_{k}}_{ij} = \vec{f}_{k,S}\LRp{\vec{u}_i,\vec{u}_j},  \label{eq:Fk}
\end{align}
where $\vv{\vec{l}}$ denotes the vector of Lagrange polynomial basis functions.

An entropy conservative DG formulation for the compressible Euler equation can then be written as follows:
\begin{align}\label{eq:ESDGeuler}
    \fnt{M}\rtwo{\td{\fnt{u}}{t}} + \sum\limits_{k=1}^2 2 \LRp{\fnt{Q}_k \circ \fnt{F}_{k}} \bm{1} + \sum\limits_{k=1}^2 \fnt{E}^T\fnt{B}_k\LRp{\fnt{f}_k^{\rm{I},*}-\fnt{f}_k^{\rm I}} = 0.
\end{align}
We can rewrite the formulation in a skew-symmetric form by using the SBP property
\begin{align}\label{eq:ESDGskeweuler}
        \fnt{M}\rtwo{\td{\fnt{u}}{t}} + \sum\limits_{k=1}^2 \LRp{\LRp{\fnt{Q}_k - \fnt{Q}_k^T} \circ \fnt{F}_{k}} \bm{1} + \sum\limits_{k=1}^2 \fnt{E}^T\fnt{B}_k\fnt{f}_k^{\rm{I},*} = 0,
\end{align}
where we denote the interface numerical flux by $\fnt{f}^{\rm{I},*}_k$. \rzero{For this paper, we use local Lax-Friedrichs interface fluxes in our high order ESDG formulation. When the maximum wave speed estimate is suitably chosen, the Lax-Friedrichs fluxes are entropy-stable, implying that (\ref{eq:ESDGskeweuler}) will also be entropy-stable. }

To discretize the symmetrized viscous fluxes, we first rewrite the viscous terms as a first-order system of partial differential equations using derivatives of the entropy variables and auxiliary variables $\bm{\sigma}_i$. Then we can derive an LDG-type formulation for the first-order system \cite{chan2022entropy}, 
\begin{alignat}{4}
    \vec{\Theta}_i &= \pd{\vec{v}}{x_i} &&\fnt{M}\fnt{\Theta}_i &&= \frac{\fnt{Q}_i - \fnt{Q}_i^T}{2}\fnt{v} + \fnt{E}^T\fnt{B}_i\rzero{\fnt{v}^+} \nonumber\\
    \vec{\sigma}_i &= \vec{K}_{i1}\vec{\Theta}_1 + \vec{K}_{i2}\vec{\Theta}_2 \quad \xRightarrow{\rm Discretize} \quad &&\fnt{\sigma}_i &&= \fnt{K}_{i1}\fnt{\Theta}_1 + \fnt{K}_{i2}\fnt{\Theta}_2 \label{eq:ESDGCNScont}\\
    \pd{\vec{f}^{\rm V}_1}{x_1} + \pd{\vec{f}^{\rm V}_2}{x_2} &= \pd{\vec{\sigma}_1}{x_1}+\pd{\vec{\sigma}_2}{x_2} &&\fnt{M} \fnt{H} &&= \sum\limits_{k=1}^2 \LRp{\LRp{\fnt{Q}_k - \fnt{Q}_k^T} \circ \fnt{F}^{\sigma}_k}\bm{1} + \sum\limits_{k=1}^2 \fnt{E}^T\fnt{B}_k \fnt{\sigma}^*_k \nonumber \\
    & &&\LRp{\fnt{F}^\sigma_k}_{ij} &&= \frac{\LRp{\fnt{\sigma}_k}_i+\LRp{\fnt{\sigma}_k}_j}{2},\nonumber \\
    & &&\LRp{\fnt{v}^*}_i &&= \frac{\vec{v}_i + \vec{v}_i^+}{2}, \quad \LRp{\fnt{\sigma}_k^*} = \frac{\LRp{\fnt{\sigma}_k}_i + \LRp{\fnt{\sigma}_k}_i^+}{2},\nonumber
\end{alignat}
where $\fnt{M}\fnt{H}$ is the discretization of $\int_{\widehat{D}} \LRp{\pd{\vec{\sigma}_1}{x_1}+\pd{\vec{\sigma}_2}{x_2}} \vv{\vec{l}}$. Note that we have written the product of the SBP operator and the flux vector in a flux differencing formulation involving the Hadamard product of the SBP operator and a central flux matrix \cite{chan2022entropy}. This formulation will become useful in Section~\ref{sec:low}.

Under appropriate choices of interface flux and interface penalization, formulations \eqref{eq:ESDGeuler}, \eqref{eq:ESDGskeweuler} and \eqref{eq:ESDGCNScont} satisfy semi-discrete entropy balances. For details of the proof, readers may refer to \cite{chan2022entropy}. For completeness, we state the semi-discrete entropy balance for the compressible Navier-Stokes equation for reference:

\begin{theorem}
    Assume continuity in time and positivity of the density and internal energy. If the domain is periodic, then
    \begin{equation*}
        \td{}{t}\int_{\Omega} \eta\LRp{\vec{u}} \approx \bm{1}^T \rtwo{\fnt{M}}\td{\eta\LRp{\fnt{u}}}{t} = \fnt{v}^T \fnt{M}\rtwo{\td{\fnt{u}}{t}} \leq -\sum\limits_{i,j=1}^2 \fnt{\Theta}_i^T \fnt{M} \fnt{K}_{ij} \fnt{\Theta}_j \leq 0,
    \end{equation*}
    which corresponds to the continuous entropy balance
    \begin{equation*}
        \int_\Omega \pd{\eta(\vec{u})}{t} = \int_{\Omega} \LRp{\pd{\vec{u}}{t}}^T \vec{v}\LRp{\vec{u}} \leq \int_\Omega \sum\limits_{i,j=1}^2 \LRp{\pd{\vec{v}}{x_i}}^T \LRp{\vec{K}_{ij} \pd{\vec{v}}{x_j}}
    \end{equation*}
\end{theorem}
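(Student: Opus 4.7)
The plan is to work through the chain of (in)equalities from left to right in four steps. Step one handles the first approximation $\td{}{t}\int_\Omega \eta(\vec{u}) \approx \bm{1}^T \fnt{M}\td{\eta(\fnt{u})}{t}$, which is simply the statement that the diagonal SBP quadrature approximates the exact integral (the approximation becomes an equality for polynomials of degree at most $2N-1$). Step two, the identity $\bm{1}^T \fnt{M}\td{\eta(\fnt{u})}{t} = \fnt{v}^T \fnt{M}\td{\fnt{u}}{t}$, follows from the chain rule applied pointwise: since $\vec{v} = \pd{\eta}{\vec{u}}$ and $\fnt{M}$ is diagonal, both sides reduce to $\sum_i \vec{w}_i \vec{v}(\vec{u}_i)^T \td{\vec{u}_i}{t}$. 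The positivity of density and internal energy is invoked here to guarantee that $\vec{v}(\vec{u})$ is well-defined.

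For step three, I would substitute the full ESDG residual, combining the inviscid formulation \eqref{eq:ESDGskeweuler} with the discretized viscous term $\fnt{M}\fnt{H}$ from \eqref{eq:ESDGCNScont}. Testing against $\fnt{v}^T$ splits the result into an inviscid volume term, an inviscid interface term, and a viscous contribution. For the inviscid volume term, Tadmor's entropy conservation property $(\vec{v}_L-\vec{v}_R)^T\vec{f}_{k,S}(\vec{u}_L,\vec{u}_R) = \psi_k(\vec{u}_L)-\psi_k(\vec{u}_R)$ combined with the skew-symmetry of $(\fnt{Q}_k-\fnt{Q}_k^T)$ collapses the Hadamard sum onto an entropy-potential telescoping sum $\bm{\psi}_k^T(\fnt{Q}_k-\fnt{Q}_k^T)\bm{1}$, which reduces to boundary contributions via the SBP property \eqref{eq:SBP}. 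Together with the inviscid surface term and the dissipative part of the local Lax-Friedrichs flux, this yields a non-positive contribution whose entropy-conservative part cancels globally under periodicity when the contributions from neighboring elements are added.

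Step four, which is the main technical obstacle, addresses the viscous contribution $\fnt{v}^T \fnt{M}\fnt{H}$. Because the flux matrix $\fnt{F}^\sigma_k$ is a central (symmetric) flux and $(\fnt{Q}_k - \fnt{Q}_k^T)$ is skew-symmetric, the Hadamard-form sum simplifies to $\fnt{v}^T(\fnt{Q}_k-\fnt{Q}_k^T)\fnt{\sigma}_k$, i.e.\ a standard matrix-vector product. Applying the LDG definition $\fnt{M}\fnt{\Theta}_i = \frac{\fnt{Q}_i-\fnt{Q}_i^T}{2}\fnt{v} + \fnt{E}^T\fnt{B}_i \fnt{v}^+$ and using the SBP property to move $\fnt{Q}_k^T$ onto $\fnt{v}$, this collapses to $-\sum_k \fnt{\sigma}_k^T \fnt{M}\fnt{\Theta}_k$ plus surface terms involving $\fnt{v}^*$ and $\fnt{\sigma}^*_k$; the latter cancel after summation over elements thanks to periodicity and the single-valuedness of the central numerical traces. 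Substituting $\fnt{\sigma}_k = \sum_j \fnt{K}_{kj}\fnt{\Theta}_j$ then produces the quadratic form $-\sum_{i,j=1}^2 \fnt{\Theta}_i^T \fnt{M}\fnt{K}_{ij}\fnt{\Theta}_j$, and positive semi-definiteness of the block matrix $\fnt{K}$ together with positivity of the weights in $\fnt{M}$ furnishes the final inequality $\leq 0$.

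The hard part is this last step: keeping careful track of how the LDG-style interface terms in the definition of $\fnt{\Theta}_k$ combine with the surface viscous fluxes $\fnt{\sigma}^*_k$ so that neighbor contributions cancel cleanly under periodicity. This relies on a discrete integration-by-parts argument driven by the SBP property applied across elements, and it is exactly the place where the symmetric nature of the central viscous flux pairs with the skew-symmetric volume operator to eliminate spurious inter-element contributions. Once this accounting is settled, the theorem follows directly from \eqref{eq:entropysymm} and the positive semi-definiteness of $\vec{K}$.
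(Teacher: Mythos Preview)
The paper does not actually supply its own proof of this theorem: immediately before the statement it says ``For details of the proof, readers may refer to \cite{chan2022entropy}. For completeness, we state the semi-discrete entropy balance for the compressible Navier-Stokes equation for reference.'' So there is no in-paper argument to compare against; the theorem is quoted from the cited reference.

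That said, your outline is the standard route taken in that reference and in related works, and it is essentially correct. One small technical wrinkle in Step~4: the reduction of the Hadamard term $\fnt{v}^T\bigl((\fnt{Q}_k-\fnt{Q}_k^T)\circ\fnt{F}_k^\sigma\bigr)\bm{1}$ to $\fnt{v}^T(\fnt{Q}_k-\fnt{Q}_k^T)\fnt{\sigma}_k$ is not exact as written, because $(\fnt{Q}_k-\fnt{Q}_k^T)\bm{1}=-\fnt{E}^T\fnt{B}_k\fnt{E}\bm{1}\neq 0$; the extra piece is a boundary contribution that must be grouped with the surface flux $\fnt{\sigma}_k^*$ terms. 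You already anticipate ``plus surface terms,'' so this is only a matter of bookkeeping, and your identification of the interface cancellation under periodicity as the delicate step is accurate.
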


This discrete entropy balance extends also to certain non-periodic boundary conditions (for example, adiabatic or reflective walls) \cite{chan2022entropy}. 


\section{A positivity preserving low order method for the compressible Navier-Stokes equations}
\label{sec:low}

The nonlinear entropy stability described in the previous section holds only when the density and internal energy are positive. In this section, we will introduce a sparse low-order positivity preserving discretizations. The formulation is based on a sparse low order DG discretization which, when augmented with an artificial dissipation term (usually referred to as the ``graph viscosity'' \cite{guermond2018second}), preserves the positivity of density and pressure. The sparse low-order positivity preserving DG discretization can be written in matrix form as \begin{gather}
    \fnt{M}\rtwo{\td{\fnt{u}}{t}} + \sum\limits_{k=1}^2 \LRp{\LRp{\fnt{Q}_k^{\low} - \LRp{\fnt{Q}_k^{\low}}^T} \circ \fnt{F}_k}\bm{1}  - \LRp{\fnt{\Lambda}\circ \fnt{D}}\bm{1} + \sum\limits_{k=1}^2 \fnt{E}^T\LRs{\fnt{B}_k \fnt{f}_k^* - \fnt{\lambda}_k\jump{\fnt{u}}} = 0 \label{eq:lowpp}
    \\
            \LRp{\fnt{F}_k}_{ij} = 
    \frac{1}{2}\LRp{\vec{f}_{k,i}^\text{I} + \rzero{\vec{f}_{k,j}^\text{I}} - \LRp{\vec{\sigma}_k}_i - \LRp{\vec{\sigma}_k}_j}
    , \quad
    \fnt{f}_k^* = 
    \frac{1}{2}\LRp{\vec{f}_{k,i}^\text{I} + \vec{f}_{k,i}^{\text{I},+} - \LRp{\fnt{\sigma}_k}_i - \LRp{\fnt{\sigma}_k}_i^+}\nonumber
    \\
    \fnt{\Lambda}_{ij} = \lambda_{ij}, \quad \LRp{\fnt{D}_{ij}} = \vec{u}_j -\vec{u}_i. \nonumber 
\end{gather}
Note that $\vec{\sigma} = 0$ recovers the compressible Euler equations. 

Formulation \rone{\eqref{eq:lowpp}} starts from the skew-symmetric form of the discontinuous Galerkin collocation spectral element discretizations \cite{gassner2013skew}. To construct the low order positivity-preserving scheme, we replace the high-order SBP operators $\fnt{Q}_k$ with the sparse low-order operators $\fnt{Q}_k^{\low}$ introduced in Section \ref{sec:ops}. As suggested in \rtwo{\hbox{\cite{kuzmin2020subcell,lohmann2017flux,hajduk2021monolithic,pazner2021sparse}}}, sparsifying the SBP operator prevents the accuracy of the proposed low order discretization from decreasing as the order of approximation increases \cite{pazner2021sparse}. 

The key to achieving positivity is the addition of the graph viscosity term $\LRp{\fnt{\Lambda}\circ \fnt{D}}\bm{1}$ and the penalization $\fnt{\lambda}_k\jump{\fnt{u}}$ at interfaces. The graph viscosity coefficients are chosen as
\begin{align}
    \lambda_{ij} &= \max\LRc{\beta\LRp{\vec{u}_i,\vec{\sigma}_i,\frac{\vec{n}_{ij}}{\nor{\vec{n}_{ij}}}}, \beta\LRp{\vec{u}_j,\vec{\sigma}_j,\frac{\vec{n}_{ij}}{\nor{\vec{n}_{ij}}}}}\nor{\vec{n}_{ij}} \nonumber\\
    \LRp{\fnt{\lambda}_k}_i &= \frac{\LRb{\LRp{\fnt{B}_k}_{ii}}}{2}
    \max\LRc{\beta\LRp{\vec{u}_i,\vec{\sigma}_i, \widehat{\fnt{n}}_i}, \beta\LRp{\vec{u}_i^+,\vec{\sigma}_i^+, \widehat{\fnt{n}}_i}}
     \qquad \fnt{n}_{ij} =\frac{1}{2}
    \begin{bmatrix}\LRp{\fnt{Q}_r^{\low} - \LRp{\fnt{Q}_r^{\low}}^T}_{ij} \\ \LRp{\fnt{Q}_s^{\low} - \LRp{\fnt{Q}_s^{\low}}^T}_{ij}\end{bmatrix}
    \label{eq:visccoeff}
\end{align}
We note that the graph viscosity coefficient $\lambda_{ij}$ is nonzero if and only if ${\LRp{\fnt{Q}_r^{\low} - \LRp{\fnt{Q}_r^{\low}}^T}}_{ij}$ or ${\LRp{\fnt{Q}_s^{\low} - \LRp{\fnt{Q}_s^{\low}}^T}}_{ij}$ is nonzero (equivalently, the graph viscosity term has the same sparsity pattern as the graph-Laplacian $\fnt{L}$, excluding the diagonal entries). 
Moreover, if the coefficient $\beta$ is defined appropriately, we can ensure positivity of the fully discrete version of (\ref{eq:lowpp}). In particular, we define 
\begin{align}
\beta\LRp{\vec{u},\vec{\sigma},\vec{n}} &= \epsilon_0 + \LRb{\vec{n}\cdot \vec{u}} + \frac{1}{2\rho^2 e}\LRp{\sqrt{\rho^2 \LRp{\vec{q}\cdot\vec{n}}^2 + 2\rho^2e\nor{\vec{n}\cdot \vec{\tau} - p \vec{n}}^2}+\rho \LRb{\vec{q}\cdot \vec{n}}},\label{eq:zhangbeta}\\
    \vec{n} \cdot \vec{\tau} &= \begin{bmatrix}\vec{n}_1 \LRp{\vec{\sigma}_1}_2 + \vec{n}_2 \LRp{\vec{\sigma}_1}_3 \\ \vec{n}_1 \LRp{\vec{\sigma}_2}_2 + \vec{u}_2 \LRp{\vec{\sigma}_2}_3\end{bmatrix}, \qquad \vec{q} = \begin{bmatrix}\vec{u}_1 \LRp{\vec{\sigma}_1}_2 + \vec{u}_2 \LRp{\vec{\sigma}_1}_3 - \LRp{\vec{\sigma}_1}_4 \\ \vec{u}_1 \LRp{\vec{\sigma}_2}_2 + \vec{u}_2 \LRp{\vec{\sigma}_2}_3 - \LRp{\vec{\sigma}_2}_4\end{bmatrix}\nonumber
\end{align} where $\epsilon_0$ is a positive number arbitrarily close to $0$\rzero{, and we assume the normal $\vec{n}$ is an unit vector}. 
This definition is motivated by the observation in \cite{zhang2017positivity} that positivity of the quantity 
\begin{equation}
\beta\vec{u} \pm \LRp{\vec{f}^{\rm I}\LRp{\vec{u}}-\vec{\sigma}}\cdot \vec{n}
\label{eq:zhang_pos}
\end{equation}
is equivalent to the following condition:
\begin{align}
\beta > \LRb{\vec{n}\cdot \vec{u}} + \frac{1}{2\rho^2 e}\LRp{\sqrt{\rho^2 \LRp{\vec{q}\cdot\vec{n}}^2 + 2\rho^2e\nor{\vec{n}\cdot \vec{\tau} - p \vec{n}}}+\rho \LRb{\vec{q}\cdot \vec{n}}}, \label{eq:charbeta}
\end{align}
where $\vec{n}$ is a unit normal vector. This observation can be used to show that, under forward Euler time-stepping and an appropriate CFL condition, the low order scheme preserves positivity of the density and pressure. 

Using the SBP properties \eqref{eq:SBPlowr} and \eqref{eq:SBPlows}, we can rewrite formulation \eqref{eq:lowpp} (under a forward Euler time discretization) for each index $i$ as 
\begin{align}
    \fnt{m}_i\frac{\fnt{u}_i^{{\rm L},n+1}- \fnt{u}_i}{\tau} &+ \sum\limits_{j \in \mathcal{I}(i)}\sum\limits_{k=1}^2 \LRp{\fnt{Q}_k^{\low} - \LRp{\fnt{Q}_k^{\low}}^T}_{ij}\frac{\vec{f}_k\LRp{\fnt{u}_j} \rzero{-\LRp{\fnt{\sigma}_k}_j}-\vec{f}_k(\fnt{u}_i)\rzero{+\LRp{\fnt{\sigma}_k}_i}}{2} - \lambda_{ij}\LRp{\fnt{u}_j + \fnt{u}_i} \nonumber\\
    &+ \sum\limits_{j \in \mathcal{B}(i)} \sum\limits_{k=1}^2 \LRp{\fnt{E}^T\fnt{B}_k\fnt{E}}_{ii} \frac{\rzero{\vec{f}_k\LRp{\fnt{u}_i^+}-\LRp{\fnt{\sigma}_k}_i^+ - \vec{f}_k\LRp{\fnt{u}_i}+\LRp{\fnt{\sigma}_k}_i}}{2} - \LRp{\fnt{\lambda}_k}_i\LRp{\fnt{u}_j + \fnt{u}_i} \nonumber \\
    &+\LRp{\sum\limits_{j \in \mathcal{I}(i)} 2\lambda_{ij} + \sum\limits_{j \in \mathcal{B}(i)} \sum\limits_{k=1}^2 2\LRp{\fnt{\lambda}_k}_i} \fnt{u}_i = 0, \label{eq:lpprewrite}
\end{align}
where we write $\fnt{u}^n_i$ as $\fnt{u}_i$ for simplicity of notation. We define $\mathcal{I}(i)$, the neighboring nodes of node $i$, as the set of indices $j$ where $\LRp{\fnt{Q}_r^{\low} - \LRp{\fnt{Q}_r^{\low}}^T}_{ij}$ or $\LRp{\fnt{Q}_s^{\low} - \LRp{\fnt{Q}_s^{\low}}^T}_{ij}$ is nonzero. $\mathcal{B}(i)$ is the set of indices for nodes exterior to the surface quadrature points on a given element, whose states are usually denoted by $\fnt{u}^+$. 

We can rearrange \eqref{eq:lpprewrite} to express the low order solution as a convex combination of the previous state $\fnt{u}^n$ and the intermediate states $\overline{\fnt{u}}$:
\begin{align}
    \fnt{u}^{{\rm L},n+1}_i = \LRp{1-\frac{2\tau\lambda_i}{\fnt{m}_i}}\fnt{u}_i  + \frac{2\tau\lambda_{ij}}{\fnt{m}_i}\sum\limits_{j \in \mathcal{I}(i)} \overline{\fnt{u}}_{\fnt{u}_i,\fnt{u}_j,\fnt{\sigma}_i,\fnt{\sigma}_j,\fnt{n}_{ij}} + \sum\limits_{k=1}^2 \frac{2\tau\LRp{\fnt{\lambda}_k}_i}{\fnt{m}_i} \sum\limits_{j \in \mathcal{B}(i)} \overline{\fnt{u}}_{\fnt{u}_i,\fnt{u}_i^+,\fnt{\sigma}_i,\fnt{\sigma}_i^+,\fnt{n}_i}, \label{eq:lowppintermediate}
\end{align}
where we define the intermediate states\rtwo{, introduced in the context of Godunov-type methods for hyperbolic systems \hbox{\cite{harten1983upstream}},} as 
\begin{align}
    \overline{\vec{u}}_{\vec{u}_L,\vec{u}_R,\vec{\sigma}_L,\vec{\sigma}_R,\vec{n}} &=
      \frac{1}{2}\LRp{\vec{u}_L+\vec{u}_R} - \frac{1}{2\beta_{L,R}}\rzero{\frac{\vec{n}}{\nor{\vec{n}}}}\cdot\LRp{\vec{f}^{\rm I}_R - \vec{\sigma}_R- \vec{f}^{\rm I}_L  +\vec{\sigma}_L}, \label{eq:intermediate} \\
      \qquad \beta_{L,R} &= \max\LRc{\beta\LRp{\vec{u}_L, \vec{\sigma}_L, \frac{\vec{n}}{\nor{\vec{n}} }}, \beta\LRp{\vec{u}_R, \vec{\sigma}_R, \frac{\vec{n}}{\nor{\vec{n}} }}} \label{eq:betaLR}
\end{align}
We can show the intermediate states are admissible with the graph viscosity coefficients defined in \eqref{eq:visccoeff}:
\begin{lemma}
For admissible states $\vec{u}_L \in \mathcal{A}$, $\vec{u}_R \in \mathcal{A}$, the intermediate states $\overline{\vec{u}}_{\vec{u}_L,\vec{u}_R,\vec{\sigma}_L,\vec{\sigma}_R,\vec{n}}$ are admissible. \label{lemma}
\end{lemma}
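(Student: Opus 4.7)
The plan is to show that $\overline{\vec{u}}$ can be written as a convex combination of two auxiliary states, each of which lies in the admissible set $\mathcal{A}$ by virtue of Zhang's characterization recorded in \eqref{eq:zhang_pos}--\eqref{eq:charbeta}. Convexity of $\mathcal{A}$ then delivers admissibility of $\overline{\vec{u}}$.

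First I would perform an algebraic rearrangement of \eqref{eq:intermediate}. Multiplying through by $2\beta_{L,R}$ and grouping ``left with left'' and ``right with right'' yields
\[
\overline{\vec{u}}_{\vec{u}_L,\vec{u}_R,\vec{\sigma}_L,\vec{\sigma}_R,\vec{n}} = \frac{1}{2}\cdot\frac{\vec{U}_L}{\beta_{L,R}} + \frac{1}{2}\cdot\frac{\vec{U}_R}{\beta_{L,R}},
\]
where, writing $\widehat{\vec{n}} = \vec{n}/\nor{\vec{n}}$,
\begin{align*}
\vec{U}_L &= \beta_{L,R}\vec{u}_L + \widehat{\vec{n}}\cdot\LRp{\vec{f}^{\rm I}_L-\vec{\sigma}_L}, \\
\vec{U}_R &= \beta_{L,R}\vec{u}_R - \widehat{\vec{n}}\cdot\LRp{\vec{f}^{\rm I}_R-\vec{\sigma}_R}.
\end{align*}
These are exactly the ``Zhang states'' appearing in \eqref{eq:zhang_pos}, with the coefficient $\beta$ taken to be the common value $\beta_{L,R}$.

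Next I would verify $\vec{U}_L,\vec{U}_R \in \mathcal{A}$. Zhang's observation states that $\beta\vec{u}\pm\LRp{\vec{f}^{\rm I}(\vec{u})-\vec{\sigma}}\cdot\widehat{\vec{n}}$ has positive density and internal energy precisely when $\beta$ exceeds the right-hand side of \eqref{eq:charbeta}, which depends only on $\vec{u}$, $\vec{\sigma}$, and $\widehat{\vec{n}}$, not on $\beta$ itself. By the definition \eqref{eq:betaLR} we have $\beta_{L,R}\ge \beta(\vec{u}_L,\vec{\sigma}_L,\widehat{\vec{n}})$, and the positive $\epsilon_0$ margin built into \eqref{eq:zhangbeta} ensures strict inequality against the threshold in \eqref{eq:charbeta}, so $\vec{U}_L$ is admissible. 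The analogous argument with the opposite sign gives $\vec{U}_R \in \mathcal{A}$.

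To close the proof I would invoke the cone property of $\mathcal{A}$: density is linear in the conservative variables, and a short calculation shows $\rho e(c\vec{u}) = c\,\rho e(\vec{u})$ for $c>0$, so scaling by $1/\beta_{L,R}>0$ preserves membership in $\mathcal{A}$. Hence $\vec{U}_L/\beta_{L,R}$ and $\vec{U}_R/\beta_{L,R}$ both lie in $\mathcal{A}$, and since $\mathcal{A}$ is convex (being an intersection of superlevel sets of concave functionals, cf.\ \eqref{eq:admissbleset}), the convex combination $\overline{\vec{u}}$ lies in $\mathcal{A}$. The main obstacle is essentially notational rather than analytical: one must be careful that the inequality threshold in \eqref{eq:charbeta} is independent of $\beta$, so that replacing the state-specific $\beta(\vec{u}_L,\vec{\sigma}_L,\widehat{\vec{n}})$ by the symmetrized maximum $\beta_{L,R}$ only strengthens the admissibility inequality for each Zhang state.
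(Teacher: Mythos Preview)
Your proof is correct and follows essentially the same route as the paper: both rewrite $\overline{\vec{u}}$ as $\frac{1}{2\beta_{L,R}}$ times the sum of two ``Zhang states'' of the form $\beta_{L,R}\vec{u}\pm(\vec{f}^{\rm I}(\vec{u})-\vec{\sigma})\cdot\widehat{\vec{n}}$ and invoke \eqref{eq:charbeta} together with $\beta_{L,R}\ge\beta(\vec{u}_L,\vec{\sigma}_L,\widehat{\vec{n}}),\beta(\vec{u}_R,\vec{\sigma}_R,\widehat{\vec{n}})$. Your version is simply more explicit than the paper's, spelling out the cone property of $\mathcal{A}$ and the convexity step that the paper leaves implicit.
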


\begin{proof}

We can rewrite the intermediate state as
\begin{align}
    \overline{\vec{u}}_{\vec{u}_L,\vec{u}_R,\vec{\sigma}_L,\vec{\sigma}_R,\vec{n}} &= \frac{1}{2\beta_{L,R}} \LRs{\beta_{L,R} \vec{u}_R+\LRp{\vec{f}^I_R - \vec{\sigma}_R}\cdot \rzero{\frac{\vec{n}}{\nor{\vec{n}}}}} + \frac{1}{2\beta_{L,R}} \LRs{\beta_{L,R} \vec{u}_L-\LRp{\vec{f}^I_L - \vec{\sigma}_L}\cdot \rzero{\frac{\vec{n}}{\nor{\vec{n}}}} }.
    \label{eq:intermediaterewrite}
\end{align}
By \eqref{eq:betaLR}, $\beta_{L,R} \geq \beta_L, \beta_{L,R} \geq \beta_R$, so the intermediate state $    \overline{\vec{u}}_{\vec{u}_L,\vec{u}_R,\vec{\sigma}_L,\vec{\sigma}_R,\vec{n}}$ is admissible by  \eqref{eq:charbeta} \cite{zhang2017positivity}.
\end{proof}

Then the conservation and positivity preserving properties of the formulation \eqref{eq:lowpp} directly follows from Lemma \ref{lemma}.

\begin{theorem}
    \label{thm:lowpos}
    Assume the domain is periodic. The formulation \eqref{eq:lowpp} is conservative 
    \begin{align}
        \bm{1}^T\fnt{M}\fnt{u}^{k+1} = \bm{1}^T\fnt{M}\fnt{u}^{k}. \qquad \label{eq:conservation}
    \end{align}
    In addition, if $\fnt{u}_i^n \in \mathcal{A}$ and a timestep condition is satisfied
    \begin{align}
        \tau \leq \frac{\fnt{m}_i}{2\lambda_i}, \qquad \lambda_i = \sum\limits_{j \in \mathcal{I}(i)} \lambda_{ij} + \sum\limits_{j \in \mathcal{B}(i)} \sum\limits_{k=1}^2 \LRp{\fnt{\lambda}_k}_i, \label{eq:CFL}
    \end{align}
    then the formulation is positivity preserving under forward Euler time-stepping:
    \begin{align}
        \fnt{u}^{{\low},n+1}_i \in \mathcal{A}.
    \end{align}
\end{theorem}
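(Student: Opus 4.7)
The plan is to prove conservation and positivity separately, since they draw on different properties of the formulation.

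For conservation, I would left-multiply \eqref{eq:lowpp} by $\bm{1}^T$ and show that every spatial term vanishes under periodicity. The volume flux term $\bm{1}^T\LRp{\LRp{\fnt{Q}_k^{\low}-\LRp{\fnt{Q}_k^{\low}}^T}\circ \fnt{F}_k}\bm{1}$ vanishes because $\fnt{F}_k$ is symmetric (by the symmetry of $\vec{f}_{k,S}$ and of the central viscous flux) while $\fnt{Q}_k^{\low}-\LRp{\fnt{Q}_k^{\low}}^T$ is skew-symmetric; the row-sum of a Hadamard product of a symmetric and a skew-symmetric matrix is zero. Similarly, $\bm{1}^T\LRp{\fnt{\Lambda}\circ \fnt{D}}\bm{1}=0$ because $\fnt{\Lambda}$ is symmetric and $\fnt{D}_{ij}=\vec{u}_j-\vec{u}_i$ is skew-symmetric. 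The surface contributions involving $\fnt{B}_k\fnt{f}_k^*$ and $\fnt{\lambda}_k\jump{\fnt{u}}$ telescope across element interfaces on a periodic mesh because $\fnt{f}_k^*$ and the jump penalty are single-valued on each interface while the outward normals from neighboring elements are opposite.

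For positivity, the key is the rewrite \eqref{eq:lowppintermediate} that already expresses the forward Euler update as
\begin{equation*}
\fnt{u}^{{\rm L},n+1}_i = \LRp{1-\frac{2\tau\lambda_i}{\fnt{m}_i}}\fnt{u}_i + \frac{2\tau}{\fnt{m}_i}\LRs{\sum_{j\in\mathcal{I}(i)} \lambda_{ij}\,\overline{\vec{u}}_{\fnt{u}_i,\fnt{u}_j,\fnt{\sigma}_i,\fnt{\sigma}_j,\fnt{n}_{ij}} + \sum_{k=1}^2\LRp{\fnt{\lambda}_k}_i \sum_{j\in\mathcal{B}(i)}\overline{\vec{u}}_{\fnt{u}_i,\fnt{u}_i^+,\fnt{\sigma}_i,\fnt{\sigma}_i^+,\fnt{n}_i}},
\end{equation*}
with $\lambda_i$ as defined in \eqref{eq:CFL}. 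I would verify that the coefficients form a convex combination: they sum to one by construction, and the CFL bound $\tau \leq \fnt{m}_i/(2\lambda_i)$ makes the coefficient of $\fnt{u}_i$ nonnegative, while the remaining coefficients are nonnegative because $\lambda_{ij}$ and $\LRp{\fnt{\lambda}_k}_i$ are nonnegative by \eqref{eq:visccoeff}. By Lemma~\ref{lemma}, each intermediate state $\overline{\vec{u}}$ lies in $\mathcal{A}$, and $\fnt{u}_i\in\mathcal{A}$ by hypothesis. Since $\mathcal{A}$ is convex, the convex combination $\fnt{u}^{{\rm L},n+1}_i$ also lies in $\mathcal{A}$.

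The only real obstacle is checking that the rewrite \eqref{eq:lowppintermediate} is consistent with \eqref{eq:lowpp}, i.e.\ that the algebra that produced it actually uses the SBP identities \eqref{eq:SBPlowr}--\eqref{eq:SBPlows} to absorb the row sums $\sum_j \LRp{\fnt{Q}_k^{\low}-\LRp{\fnt{Q}_k^{\low}}^T}_{ij}\vec{f}_k(\fnt{u}_i)$ and the analogous surface diagonal into a structure compatible with \eqref{eq:intermediate}. This bookkeeping has already been carried out in \eqref{eq:lpprewrite}, so the remaining work is essentially to cite it and invoke Lemma~\ref{lemma}. Conservation is then a short symmetry argument, and positivity reduces to verifying the CFL condition gives a convex combination of admissible states.
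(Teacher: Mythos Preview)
Your proposal is correct and follows essentially the same route as the paper: skew-symmetry of the Hadamard products for the volume and graph-viscosity terms plus cancellation of surface contributions under periodicity gives conservation, and positivity follows by citing the bar-state rewrite \eqref{eq:lowppintermediate}, Lemma~\ref{lemma}, and convexity of $\mathcal{A}$ under the CFL bound. One minor slip: in the low-order scheme the matrix $\fnt{F}_k$ is built from the \emph{central} inviscid flux $\frac{1}{2}\LRp{\vec{f}_{k,i}^{\rm I}+\vec{f}_{k,j}^{\rm I}}$, not the entropy-conservative flux $\vec{f}_{k,S}$, but this does not affect your symmetry argument.
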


\begin{proof}
To show conservation, we proceed algebraically and observe that
    $\LRp{\fnt{Q}_k^{\low}-\LRp{\fnt{Q}_k^{\low}}^T}\circ \fnt{F}_k$ and $\fnt{\Lambda}\circ \fnt{D}$ are skew-symmetric. Thus,
    \begin{align}
        \bm{1}^T \LRs{\sum\limits_{k=1}^2 \LRp{\LRp{\fnt{Q}_k^{\low} - \LRp{\fnt{Q}_k^{\low}}^T} \circ \fnt{F}_k}\bm{1}  - \LRp{\fnt{\Lambda}\circ \fnt{D}}\bm{1} } = 0. \label{eq:proofcons}
    \end{align}
    Additionally, assuming the domain is periodic yields 
    \begin{align}
        \bm{1}^T\fnt{E}^T\sum\limits_{k=1}^2\LRp{\fnt{B}_k \fnt{f}_k^* - \fnt{\lambda}_k\jump{\fnt{u}} + \fnt{B}_k^+ \fnt{f}_k^* - \fnt{\lambda}_k \jump{\fnt{u}^+}} = 0. \label{eq:proofcons2}
    \end{align}
    where we have used that $\fnt{B}_k = -\fnt{B}_k^+$ and that the jump changes sign contributions on a neighboring element across a face. The conservation property follows. 
    
    To show positivity, we note that \rzero{\eqref{eq:lowppintermediate}} implies that the low order update is an convex combination of admissible states if a timestep condition is satisfied: $\tau \leq \frac{\fnt{m}_i}{2\lambda_i}$. Since the admissible set $\mathcal{A}$ is convex, the low order update $\fnt{u}_i^{{\low},n+1}$ is admissible.
\end{proof}

\begin{remark}
\rone{
We note that the timestep condition \eqref{eq:CFL} is comparable to other conditions derived in the literature \cite{zhang2010positivity, zhang2017positivity, pazner2021sparse}, and scales as $O(h / N^2)$. However, it has been observed that this positivity-preserving time-step is around 2-3 times smaller than the maximum time-step for a high order DG method \cite{zhang2010positivity}. This has been addressed using heuristic approaches, e.g., using a less restrictive CFL and backtracking to the positivity-preserving CFL condition when bounds violations are detected. However, we have not utilized such approaches in this work.
}
\end{remark}

\begin{remark}
\rtwo{
We emphasize that we only consider explicit time-stepping in this work, and the time step size will be limited by a parabolic CFL condition when solving the compressible Navier-Stokes equations. 
Thus, while the time-step restriction \eqref{eq:CFL} implies positivity, it is not sufficient to imply stability of the time-stepping iteration \cite{zhang2017positivity}. 

While the proposed method is restricted by the parabolic CFL condition for very fine resolutions, it is applicable to the simulation convection-dominated flows at lower resolutions. The stable time-step for the compressible Euler equations scales as $O(h)$, while the stable time-step for the parabolic part of the compressible Navier-Stokes equations scales as $O(h^2 \text{Re})$, where $\text{Re}$ is the Reynolds number. Thus, if $h = O(1 / \text{Re})$, the parabolic CFL condition is not overly restrictive compared to the hyperbolic CFL condition.
}
\end{remark}

\section{An entropy stable and positivity preserving limiting strategy}
\label{sec:poslim}

\rzero{In this section, we will discuss procedures for blending the positivity-preserving low-order discretization with the high-order entropy-stable discretization. The limiting strategy we will focus on in this paper was first proposed in \cite{zhang2010maximum}, and we refer this limiting strategy as elementwise (Zhang-Shu type) limiting. Readers may refer to Appendix \ref{app:convexlimit} for another limiting strategy called convex limiting inspired by flux corrected transport \cite{boris1973flux}. 

Our objective is to construct a limited solution $\fnt{u}^{n+1}_i$ from low and high order solution updates $\fnt{u}^{{\low},n+1}_i, \fnt{u}^{{\high},n+1}_i$ such that $\fnt{u}^{n+1}_i$ lies in the generalized admissible set
\begin{align}
    \fnt{u}^{n+1}_i \in \mathcal{A}\LRp{\rho^{\min}_i, \rho e^{\min}_i} = \LRc{\fnt{u}_i\ | \ \rho\LRp{\fnt{u}_i} \geq \rho^{\min}_i > 0, \rho e \LRp{\fnt{u}_i} \geq \rho e^{\min}_i > 0}. \label{eq:genralizedadmisset}
\end{align}
Here, we extend the notion of admissible set \eqref{eq:admissbleset} to satisfy positive lower bounds on the density and specific internal energy. 
We emphasize that these lower bounds are defined per node and are time dependent.

To simplify notation, we write the low order and high-order approximations \eqref{eq:ESDGskeweuler}, \eqref{eq:ESDGCNScont} and \eqref{eq:lowpp} over node $i$ in the residual form:
\begin{align}
    \fnt{m}_i \frac{\fnt{u}^{{\low},n+1}_i-\fnt{u}_i}{\tau} + \fnt{r}^{\low}_i &= 0 \label{eq:lpprf}\\
    \fnt{m}_i \frac{\fnt{u}^{{\high},n+1}_i-\fnt{u}_i}{\tau} + \fnt{r}^{\high}_i &= 0, \label{eq:esdgrf}
\end{align}
where we define the low order and high-order residuals at node $i$ as
\begin{align}
    \fnt{r}_i^{\low} &= \sum\limits_{j\in \mathcal{I}\LRp{i}}\sum\limits_{k=1}^2 \frac{1}{2}\LRp{\fnt{Q}_k^{\low} - \LRp{\fnt{Q}_k^{\low}}^T}_{ij} \LRs{\vec{f}_k\LRp{\fnt{u}_i}+\vec{f}_k\LRp{\fnt{u}_j} - \LRp{\fnt{\sigma}_k}_i - \LRp{\fnt{\sigma}_k}_j} - \lambda_{ij}\LRp{\fnt{u}_j - \fnt{u}_i} \nonumber \\
    &+ \sum\limits_{j \in \mathcal{B}\LRp{i}}\sum\limits_{k=1}^2 \frac{1}{2}\LRp{\fnt{E}^T\fnt{B}_k\fnt{E}}_{ii} \LRs{\vec{f}_k\LRp{\fnt{u}_i} + \vec{f}_k\LRp{\fnt{u}_i^+} - \LRp{\fnt{\sigma}_k}_i - \LRp{\fnt{\sigma}_k}_i^+} - \LRp{\fnt{\lambda}_k}_i \LRp{\fnt{u}_i^+ - \fnt{u}_i} \label{rli} \\
    \fnt{r}_i^{\high} &= \sum\limits_{k=1}^2 \LRp{\fnt{Q}_k - \fnt{Q}_k^{T}}_{ij} \LRs{\vec{f}_{k,S}\LRp{\fnt{u}_i,\fnt{u}_j}- \frac{\LRp{\fnt{\sigma}_k}_i + \LRp{\fnt{\sigma}_k}_j}{2}}\nonumber\\ 
    &+  \sum\limits_{k=1}^2\LRp{\fnt{E}^T\fnt{B}_k\fnt{E}}_{ii} \LRs{\vec{f}_{k,S}\LRp{\fnt{u}_i, \fnt{u}_i^+} - \frac{\LRp{\fnt{\sigma}_k}_i + \LRp{\fnt{\sigma}_k}_i^+}{2}} - \frac{\vec{w}_i^{ f}\LRp{\fnt{\lambda}_{\max,k}}_i}{2} \LRp{\fnt{u}_i^+ - \fnt{u}_i}. \label{rhi}
\end{align}
The low and high order updates satisfy the relation
\begin{align}
    \fnt{m}_i \fnt{u}_i^{{\high},n+1} = \fnt{m}_i \fnt{u}_i^{{\low},n+1} + \tau \LRp{\fnt{r}_i^{\low} - \fnt{r}_i^{\high}} \label{eq:lowhighres} 
\end{align}
Following Zhang and Shu's previous work on positivity-preserving limiters for high order DG methods \cite{zhang2010maximum, zhang2010positivity}, the limited solution $u_i^{n+1}$ can be written as
\begin{align}
     \fnt{m}_i \fnt{u}_i^{n+1} = \fnt{m}_i \fnt{u}_i^{{\low},n+1} + \tau l^e\LRp{\fnt{r}_i^{\low} - \fnt{r}_i^{\high}}, \label{eq:limitedzhangshu} 
\end{align}
where $l^e \in [0, 1]$ is the limiting parameter. These limiting parameters are defined as the maximal value of the blending parameter $l$ such that nodal values of the solution satisfy the following constraints on each element $D^k$
\begin{align}
    l^e = \max\LRc{ l \in [0,1]\ :  \fnt{u}^{{\low},n+1}_i + l \frac{\tau}{\fnt{m}_i} \LRp{\fnt{r}_i^{\low}- \fnt{r}_i^{\high}} \in \mathcal{A}\LRp{\rho^{\min}_i, \rho e^{\min}_i} \text{ for all } i \in D^k}. \label{eq:zhangshul}
\end{align}
We refer to $l^e$ as the elementwise (Zhang-Shu type) limiting/blending parameters.

\subsection{Solving for the blending parameter}

We can solve for the value of the limiting/blending parameter $l^e$ explicitly. In particular, its value is the maximum over quadratic constraints of form:
\begin{align}
    \fnt{u}^{\low} + l \fnt{P} &\in \mathcal{A}\LRp{\rho^{\min}_i, \rho e^{\min}_i}.\label{eq:quadconstraint}
\end{align}
Solving the constraint for the internal energy reduces to solving a quadratic equation. We state the explicit formula of $l$ here and refer readers to \cite{guermond2019invariant} for details of the proof. Denote $\fnt{u}^{\low} = \LRp{\rho^{\low}, \vec{m}^{\low}, E^{\low}}$ and $\fnt{P} = \LRp{\rho^{\rm P}, \vec{m}^{\rm P}, E^{\rm P}}$; then, the blending parameter $l$ is given by
\begin{align}
    l^\rho &= \begin{cases}
    1 &\text{ if }\rho^{\low} + \rho^{\rm P} \geq 0 ,\\
    \max\LRp{\frac{-\rho^{\low}+\rho^{\min}}{\rho^{\rm P}}, 0} &\text{ otherwise},
    \end{cases} \label{eq:lparamrho}\\
    l^{\rho e} &= \begin{cases} 1 &\text{ if } \LRc{l \geq 0 \ | \  al^2 + bl + c = 0} = \varnothing, \\
    \min\LRc{l \geq 0 \ | \  al^2 + bl + c = 0} &\text{ otherwise,}
    \end{cases} \label{eq:lparamrhoe}\\
    l &= \min\LRp{l^\rho, l^{\rho e}},
 \label{eq:lparam}
\end{align}
where $a, b, c$ are defined via
\begin{gather*}
a = E^{\rm P}\rho^{\rm P} - \frac{1}{2}\vec{m}^{\rm P} \cdot \vec{m}^{\rm P}, \qquad 
b = E^{\low}\rho^{\rm P} + \rho^{\low} E^{\rm P} - \vec{m}^{\low} \cdot \vec{m}^{\rm P} - \rho^{\rm P} \rho e^{\min},\\
c = E^{\low}\rho^{\low} - \frac{1}{2}\vec{m}^{\low} \cdot \vec{m}^{\low} - \rho^{\rm L} \rho e^{\min}.
\end{gather*}

An admissible set of limiting parameters exists as the solution to \eqref{eq:zhangshul} if the low order solution satisfies $\fnt{u}^{\low}_i \in \mathcal{A}\LRp{\rho^{\min}_i, \rho e^{\min}_i}$. 
We consider a lower bound that depends on the low order solution $\fnt{u}^{\low}$ and a relaxation factor $\zeta$:
\begin{align}
    \rho^{\min}_i = \zeta\rho\LRp{\fnt{u}_i^{\low}},\qquad \rho e^{\min}_i = \zeta\rho e \LRp{\fnt{u}_i^{\low}}, \qquad \zeta \in (0,1].\label{eq:generalizedposbound}
\end{align}
We note that (\ref{eq:generalizedposbound}) enforces a stronger condition than the minimal positivity constraints 
\begin{align}
     \rho^{\min}_i = \epsilon_0 > 0,\qquad \rho e^{\min}_i = \epsilon_0 > 0,
     \label{eq:posbound}
\end{align}
where $\epsilon_0$ is a small threshold. Numerical experiments suggest that the stronger bounds (\ref{eq:generalizedposbound}) avoid various issues (e.g., the time-step size converging to zero) without compromising accuracy. 
}

\subsection{Entropy stability of the low order and limited scheme}

We can, in addition, show the low-order positivity preserving method in Section \ref{sec:low} is discretely entropy-stable if we modify the amount of dissipation $\alpha$:
\begin{theorem}
\label{thm:lowes}
Let $\lambda_{\max}(\bm{u}_L, \bm{u}_R, \bm{n})$ be an upper bound on the maximum wavespeed. If the graph viscosity coefficients in \eqref{eq:visccoeff} are defined as
    \begin{align}
         \lambda_{ij} = \max\LRc{\lambda_{\max}\LRp{\vec{u}_i,\vec{u}_j,\frac{\vec{n}_{ij}}{\nor{\vec{n}_{ij}}}}, \beta\LRp{\vec{u}_i,\vec{\sigma}_i,\frac{\vec{n}_{ij}}{\nor{\vec{n}_{ij}}}}, \beta\LRp{\vec{u}_j,\vec{\sigma}_j,\frac{\vec{n}_{ij}}{\nor{\vec{n}_{ij}}}}}\nor{\vec{n}_{ij}},  \label{eq:modifiedCNScoeff}
    \end{align}
    then the positivity preserving low order method \eqref{eq:lowpp} also satisfies a \rzero{semi-}discrete entropy balance \rtwo{over each element $D$}
    \begin{align}
        \rtwo{\td{}{t}\int_{D} \eta\LRp{\fnt{u}} \approx \fnt{v}^T\fnt{M}\td{\fnt{u}}{t}}  \leq \sum\limits_{k=1}^2 \LRs{\bm{1}^T\fnt{E}^T \fnt{B}_k \fnt{E} \vec{\psi}_k} - \fnt{v}^T \fnt{E}^T \fnt{W}_f \widehat{\fnt{f}}^* -\sum\limits_{i,j=1}^2 \fnt{\Theta}_i^T \fnt{M} \fnt{K}_{ij} \fnt{\Theta}_j,  \label{eq:entropyinequalityCNS}
    \end{align}
    where $\fnt{\Theta} = 0$ for the compressible Euler equation.
\end{theorem}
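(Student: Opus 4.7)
The plan is to left-multiply the low-order discretization \eqref{eq:lowpp} by the row vector of entropy variables $\fnt{v}^T = \vec{v}(\fnt{u})^T$ and to analyze each of the resulting contributions in turn. By definition of the entropy variables, the term $\fnt{v}^T \fnt{M}\,\td{\fnt{u}}{t}$ is exactly the lumped-mass quadrature approximation to $\td{}{t}\int_D \eta(\vec{u})$ that appears on the left of \eqref{eq:entropyinequalityCNS}. The remaining contributions split naturally into an inviscid volume part, a surface part, and a viscous part, and I would bound each against the corresponding term on the right-hand side.

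For the inviscid volume contribution, I would exploit the skew-symmetry of $\fnt{Q}_k^{\low}-(\fnt{Q}_k^{\low})^T$ together with the symmetry of the central-average flux matrix $\fnt{F}_k$ and the symmetry of $\lambda_{ij}$ to rewrite $\fnt{v}^T[(\fnt{Q}_k^{\low}-(\fnt{Q}_k^{\low})^T)\circ \fnt{F}_k]\bm{1} - \fnt{v}^T(\fnt{\Lambda}\circ\fnt{D})\bm{1}$ as a sum over node pairs $(i,j)$ of pairwise quadratic forms of the type
\[
\tfrac{1}{2}(\vec{v}_i-\vec{v}_j)^T\bigl[(\fnt{Q}_k^{\low}-(\fnt{Q}_k^{\low})^T)_{ij}\,\vec{f}_{k,\mathrm{cen}}(\vec{u}_i,\vec{u}_j) - \lambda_{ij}(\vec{u}_j-\vec{u}_i)\bigr].
\]
I would then invoke Tadmor's classical Lax--Friedrichs entropy-stability result: the arithmetic-average flux plus Lax--Friedrichs dissipation with coefficient at least the maximum wavespeed satisfies a pairwise bound of the form $(\vec{v}_i-\vec{v}_j)^T\bigl[\vec{f}_{k,\mathrm{cen}}-\lambda_{ij}\,\mathrm{jump}\bigr]$ bounded by the corresponding entropy-potential difference. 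The modified coefficient \eqref{eq:modifiedCNScoeff} guarantees precisely $\lambda_{ij}\geq \lambda_{\max}\nor{\vec{n}_{ij}}$. Since $\fnt{Q}_k^{\low}\bm{1}=0$, the aggregated entropy-potential differences telescope into the boundary term $\bm{1}^T\fnt{E}^T\fnt{B}_k\fnt{E}\vec{\psi}_k$ on the right of \eqref{eq:entropyinequalityCNS}.

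For the surface term $\fnt{E}^T[\fnt{B}_k\fnt{f}^*_k - \fnt{\lambda}_k\jump{\fnt{u}}]$, the same pairwise Lax--Friedrichs inequality, applied across the interface with $(\fnt{\lambda}_k)_i\geq \lambda_{\max}$, produces the $-\fnt{v}^T\fnt{E}^T\fnt{W}_f\widehat{\fnt{f}}^*$ contribution. For the viscous block, I would mirror the argument used in the ESDG stability proof: substitute $\fnt{\sigma}_k=\sum_j\fnt{K}_{kj}\fnt{\Theta}_j$, use the discrete definition of $\fnt{\Theta}$ together with the SBP property \eqref{eq:SBPlowr}--\eqref{eq:SBPlows} of the low-order operators to transfer derivatives onto $\fnt{v}$, and close the argument by positive semi-definiteness of $\vec{K}$ to obtain $-\sum_{i,j=1}^2\fnt{\Theta}_i^T\fnt{M}\fnt{K}_{ij}\fnt{\Theta}_j\leq 0$. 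When $\fnt{\sigma}\equiv 0$ (the Euler case), this block disappears entirely, matching the theorem statement.

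The hard part will be the second step: establishing the pairwise Tadmor Lax--Friedrichs inequality under the modified graph-viscosity coefficient. Because \eqref{eq:lowpp} uses the arithmetic-average flux rather than an entropy-conservative two-point flux, there is no automatic identity $(\vec{v}_i-\vec{v}_j)^T\vec{f}_S(\vec{u}_i,\vec{u}_j)=\psi_k(\vec{u}_i)-\psi_k(\vec{u}_j)$ to invoke, and the dissipation must precisely dominate $\lambda_{\max}$ so that the pairwise quadratic is non-positive modulo the entropy-potential jumps. A secondary technical point is that every telescoping step must be re-derived with the sparse SBP identities \eqref{eq:SBPlowr}--\eqref{eq:SBPlows} rather than the dense high-order ones, and the inviscid boundary terms, the interface penalty, and the viscous dissipation must be assembled carefully so that the right-hand side of \eqref{eq:entropyinequalityCNS} is recovered exactly.
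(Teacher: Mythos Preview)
Your approach is essentially the same as the paper's: test \eqref{eq:lowpp} with $\fnt{v}^T$, use skew-symmetry to reduce the volume term to a sum of pairwise contributions, apply Tadmor's pairwise entropy inequality for the Lax--Friedrichs flux (which is precisely what the condition $\lambda_{ij}\geq \lambda_{\max}\nor{\vec{n}_{ij}}$ in \eqref{eq:modifiedCNScoeff} is designed to guarantee), and then telescope via the SBP and conservation properties of $\fnt{Q}_k^{\low}$ to the boundary potential term. The only cosmetic difference is that the paper first repackages \eqref{eq:lowpp} into a compact subcell finite-volume form with a single numerical flux $\widehat{\fnt{F}}_{ij}$ combining the central average and the graph viscosity, whereas you keep them separate until the pairwise step; also, for the surface contribution the paper simply rearranges to obtain $-\fnt{v}^T\fnt{E}^T\fnt{W}_f\widehat{\fnt{f}}^*$ (no interface inequality is needed for the \emph{local} estimate \eqref{eq:entropyinequalityCNS}), so your invocation of a pairwise inequality there is unnecessary but harmless.
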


\begin{proof}
We begin by interpreting the low order scheme \rone{\eqref{eq:lowpp}} as a low order subcell-based finite volume method using local Lax-Friedrichs type fluxes. In particular, an equivalent form of the formulation \eqref{eq:lowpp} is 
\begin{align}
    &\fnt{M}\rtwo{\td{\fnt{u}}{t}} + 2\LRp{\nor{\fnt{n}_{ij}}\circ \widehat{\fnt{F}}} \bm{1} + \fnt{E}^T \fnt{W}_f\widehat{\fnt{f}}^* = 0, \label{eq:subcellFV}\\
    &\widehat{\fnt{F}}_{ij} = \widehat{\vec{f}}\LRp{\fnt{u}_i, \fnt{u}_j,\frac{\fnt{n}_{ij}}{\nor{\fnt{n}_{ij}}}}, \quad \widehat{\fnt{f}}^*_i = \widehat{\vec{f}}\LRp{\fnt{u}_i, \fnt{u}_i^+, \widehat{\fnt{n}}_i}, \nonumber
\end{align}
where $\nor{\fnt{n}_{ij}}$ denotes the matrix whose $\LRp{i,j}$ entry is $\nor{\fnt{n}_{ij}}$ defined in \eqref{eq:visccoeff}, \rtwo{and $\fnt{W}_f$ denotes the diagonal matrix of face quadrature weights}. We define the local Lax-Friedrichs \rzero{type} flux as
\begin{align}
    \widehat{\vec{f}}\LRp{\vec{u}_L,\vec{u}_R,\vec{n}} = \vec{n} \cdot \frac{\vec{f}\LRp{\vec{u}_L} + \vec{f}\LRp{\vec{u}_R}}{2} - \frac{\alpha\LRp{\vec{u}_L,\vec{u}_R,\vec{\sigma}_L,\vec{\sigma}_R,\vec{n}}}{2}\LRp{\vec{u}_R - \vec{u}_L} \label{eq:LLF}
\end{align}

We next test the formulation \eqref{eq:subcellFV} with entropy variables $\fnt{v}$ evluated at nodes. We can rewrite the volume contribution as \cite{chan2018discretely}
    \begin{align}
        -\fnt{v}^T \LRp{2\nor{\fnt{n}_{ij}} \circ \widehat{\fnt{F}}} \bm{1} &= -\fnt{v}^T \LRp{\nor{\fnt{n}_{ij}} \circ \widehat{\fnt{F}} }\bm{1} + \fnt{v}^T \LRp{\nor{\fnt{n}_{ij}} \circ \widehat{\fnt{F}}^T }\bm{1} \label{eq:proofES1}\\
        &= \sum\limits_{i,j} \nor{\fnt{n}_{ij}}\LRp{\fnt{v}_i - \fnt{v}_j}^T \widehat{\vec{f}}\LRp{\fnt{u}_i,\fnt{u}_j,\frac{\fnt{n}_{ij}}{\nor{\fnt{n}_{ij}}}} \label{eq:proofES2}\\
        &\leq \sum\limits_{i,j}\nor{\fnt{n}_{ij}} \frac{\fnt{n}_{ij}}{\nor{\fnt{n}_{ij}}} \cdot \LRp{\vec{\psi}_i-\vec{\psi}_j} \label{eq:proofES3}\\
        &= \sum\limits_{k=1}^2 \bm{1}^T \LRp{\frac{\fnt{E}^T\fnt{B}_k\fnt{E}}{2} - \fnt{Q}^T_k}\vec{\psi}_k - \vec{\psi}_k^T \LRp{\fnt{Q}_k - \frac{\fnt{E}^T\fnt{B}_k\fnt{E}}{2}} \bm{1} = \sum\limits_{k=1}^2 \LRs{\bm{1}^T\fnt{E}^T \fnt{B}_k \fnt{E} \vec{\psi}_k},\label{eq:proofES4}
    \end{align}
    where we used the skew-symmetry of $\widehat{\fnt{F}}$ and the SBP property of $\fnt{Q}$. From \eqref{eq:proofES2} to \eqref{eq:proofES3}, we have used the entropy stability of the Lax-Friedrichs flux:
    \begin{align}
        \LRp{\fnt{v}_i -\fnt{v}_j}^T \widehat{\vec{f}} \leq \vec{\psi}_i-\vec{\psi}_j. \label{eq:nodewiseES}
    \end{align}
    The surface contribution $- \fnt{v}^T \fnt{E}^T \fnt{W}_f \widehat{\fnt{f}}^*$ directly follows from rearrangement, and the viscous contribution $-\sum\limits_{i,j=1}^2 \fnt{\Theta}_i^T \fnt{M} \fnt{K}_{ij} \fnt{\Theta}_j $ follows from \cite{chan2022entropy}.
\end{proof}

We note that for the compressible Euler equations, the viscosity coefficient $\beta$ defined in \eqref{eq:zhangbeta} is never larger than the maximum wavespeed. Therefore, the modified viscosity coefficient $\alpha$ defined in \eqref{eq:modifiedCNScoeff} reduces to the standard maximum wavespeed estimate for compressible Euler. As a result of Theorem \ref{thm:limitedES}, \rzero{the limited solution satisfies a semi-discrete entropy balance.}

\begin{theorem}
    \label{thm:limitedES}
    Assume the domain is periodic. Then, the semi-discrete limited scheme is conservative and satisfies a \rzero{semi-}discrete entropy balance for the compressible Navier-Stokes equations,
    \begin{align}
        \rtwo{\td{}{t}\int_{D} \eta\LRp{\fnt{u}}} \leq -\sum\limits_{i,j=1}^2 \fnt{\Theta}_i^T \fnt{M} \fnt{K}_{ij} \fnt{\Theta}_j \label{eq:ESdisspCNS}
    \end{align}
\end{theorem}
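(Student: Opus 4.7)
The plan is to rewrite the semi-discrete limited scheme as a convex combination of the low-order and high-order semi-discrete updates, and then invoke the entropy inequalities already established for each. Starting from \eqref{eq:limitedzhangshu} and using $\fnt{m}_i \fnt{u}_i^{{\low},n+1} = \fnt{m}_i \fnt{u}_i - \tau \fnt{r}_i^{\low}$, the semi-discrete analogue of the limited scheme at node $i$ reads
\begin{equation*}
    \fnt{m}_i \td{\fnt{u}_i}{t} \;=\; -\LRp{1-l^e}\fnt{r}_i^{\low} - l^e \fnt{r}_i^{\high}, \qquad l^e \in [0,1].
\end{equation*}
Conservation then follows immediately: summing over all nodes of the periodic domain, $\bm{1}^T \fnt{r}^{\low} = 0$ by Theorem \ref{thm:lowpos} and $\bm{1}^T \fnt{r}^{\high} = 0$ by the skew-symmetric flux-differencing structure of the ESDG residual, so every convex combination is also conservative regardless of the per-element choice of $l^e$.

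For the entropy balance, I would test the semi-discrete equation above with the nodal entropy variables $\fnt{v}$ and sum over all elements. Theorem \ref{thm:lowes} gives a per-element entropy inequality for the low-order contribution $-\fnt{v}^T \fnt{r}^{\low}$, and the ESDG analysis of Section \ref{sec:esdg} (see also \cite{chan2022entropy}) gives an analogous bound for $-\fnt{v}^T \fnt{r}^{\high}$. Both bounds have the structure ``interface potential contributions $-$ viscous dissipation'', so the limited time derivative, being the convex combination $\LRp{1-l^e}\fnt{r}^{\low} + l^e \fnt{r}^{\high}$, is controlled by the corresponding convex combination of the two bounds. Summed over the periodic mesh, the shared LDG viscous coupling contributes the global viscous dissipation $-\sum\limits_{i,j=1}^2 \fnt{\Theta}_i^T \fnt{M} \fnt{K}_{ij} \fnt{\Theta}_j$, leaving only the interface terms to control.

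The main subtlety, and the step I expect to require the most care, is precisely this last item: the blending parameter $l^e$ is constant within each element but may change across an element interface, so on a face shared between $D^k$ and $D^{k'}$ the two side contributions are weighted asymmetrically by $l^{e,k}$ and $l^{e,k'}$. They therefore do not algebraically telescope into the clean Tadmor-type jump inequality $\LRp{\fnt{v}_L-\fnt{v}_R}^T \widehat{\vec{f}} \leq \vec{n} \cdot \LRp{\vec{\psi}_L - \vec{\psi}_R}$ available in the unblended case. The way around this is to exploit the fact that both the low- and high-order interface fluxes are Lax-Friedrichs type with dissipation coefficients at least as large as the entropy-stable threshold required for the pointwise bound \eqref{eq:nodewiseES}; any convex combination of such fluxes remains entropy-stable node-by-node at the face, so each face still contributes a non-positive term under the asymmetric weighting. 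Once this per-face entropy stability is established, periodicity eliminates the remaining boundary contributions and \eqref{eq:ESdisspCNS} follows.
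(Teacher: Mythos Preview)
Your overall strategy---write the limited residual as a convex combination and invoke the per-scheme entropy inequalities---is exactly the paper's argument. The paper's proof is two sentences and relies implicitly on a structural fact you have not used, which is also where your argument slips.

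The gap is in your conservation step, and it is the \emph{same} asymmetry issue you later flag for entropy. From $\bm{1}^T\fnt{r}^{\low}=0$ and $\bm{1}^T\fnt{r}^{\high}=0$ you cannot conclude $\sum_i\bigl[(1-l^{e(i)})\fnt{r}_i^{\low}+l^{e(i)}\fnt{r}_i^{\high}\bigr]=0$ when $l^{e}$ varies across elements; the weighted sum does not split into two global sums. Your proposed fix for the entropy side (a convex combination of entropy-stable LLF fluxes is entropy stable) does not repair this either, because with $l^{e,k}\neq l^{e,k'}$ the effective numerical flux seen from $D^k$ differs from the one seen from $D^{k'}$, so neither conservation nor the Tadmor jump condition $(\fnt{v}_L-\fnt{v}_R)^T\widehat{\vec{f}}\leq\vec{n}\cdot(\vec{\psi}_L-\vec{\psi}_R)$ is available in the usual form.

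The actual mechanism is simpler: in this paper the high-order interface flux is taken to be the \emph{same} local Lax--Friedrichs flux used by the low-order scheme (stated in Section~\ref{sec:esdg} and made explicit in \eqref{eq:interfacemod}). Hence $\fnt{r}_i^{\low}-\fnt{r}_i^{\high}$ contains only volume terms, and by skew-symmetry $\sum_{i\in D^k}(\fnt{r}_i^{\low}-\fnt{r}_i^{\high})=0$ on every element. Conservation then follows from $\sum_i\fnt{r}_i^{\low}=0$ together with $\sum_k l^{e,k}\sum_{i\in D^k}(\fnt{r}_i^{\low}-\fnt{r}_i^{\high})=0$. For the entropy balance, the boundary contribution in the per-element inequality of Theorem~\ref{thm:lowes} and in the ESDG cell inequality is identical, so the convex combination inherits the \emph{same} surface term regardless of $l^{e,k}$; summing over the periodic mesh then telescopes exactly as in the unblended case and yields \eqref{eq:ESdisspCNS}. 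Once you use this shared-interface-flux observation, no separate ``asymmetric face'' argument is needed.
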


\begin{proof}
    Over each element, the elementwise limited solution can be written as a linear combination of low and high order solutions:
    \begin{align}
        \fnt{u}_i^{n+1} = \LRp{1-l} \fnt{u}^{{\low},n+1} + l \fnt{u}^{{\high},n+1}, \label{eq:limitedlncomb}
    \end{align}
    where $l$ is the elementwise limiting parameter. The result follows from the fact that both high and low-order solutions \rzero{are conservative and} satisfy a \rzero{semi-}discrete entropy balance \cite{chen2017entropy,carpenter2014entropy}.
\end{proof}

\rtwo{We note that while the proposed positivity-preserving limiter is fully discrete, Theorem~\ref{thm:limitedES} is a semi-discrete result. This is because (to the authors knowledge) it is not possible to prove a fully discrete entropy inequality for the high order method for either forward Euler or Runge-Kutta time-steppers. However, we note that it is possible to enforce a fully discrete entropy inequality using space-time formulations \cite{friedrich2019entropy} or explicit relaxation Runge-Kutta methods \cite{ranocha2020relaxation}.}

\subsection{Incorporating shock capturing}

\rzero{
Finally, our limiting strategy can be straightforwardly adapted as a shock capturing strategy. We can replace the limiting parameter as a blending function $\xi$. This shock capturing approach can be understood as blending the high order approximation with the low order positivity preserving approximation we proposed. Then we can write the shock-captured solution as:
\begin{align}
    \fnt{u}^{n+1} = \fnt{u}^{{\low},n+1} + \tau \xi \LRp{\fnt{r}_i^{\low} - \fnt{r}_i^{\high}}. \label{eq:shockcapsol}
\end{align}
The blending function $\xi$ depends on Persson and Peraire's modal shock indicator \cite{persson2006sub}. In this work's numerical experiment, we utilize the same the blending function and shock capturing parameters as in Hennemann et al.\ \cite{hennemann2021provably}.

}

\section{Time discretization}\label{sec:time}

Until now, we have assumed a first order forward Euler time discretization. We extend to higher order in time using Strong Stability Preserving (SSP) explicit Runge-Kutta schemes. We present here the SSPRK(3,3) method for reference.
\begin{alignat*}{3}
    w^{\LRp{1}} &= u^n + \tau L(t^n, u^n), \qquad &&z^{\LRp{1}} = w^{\LRp{1}} + \tau L(t^n + \tau, w^{\LRp{1}}) \\
    w^{\LRp{2}} &= \frac{3}{4} u^n + \frac{1}{4} z^{\LRp{1}}, \qquad &&z^{\LRp{2}} = w^{\LRp{2}} + \tau L(t^n+\frac{1}{2}\tau, w^{\LRp{2}}), \\
    u^{n+1} &= \frac{1}{3} u^n + \frac{2}{3} z^{\LRp{2}}
\end{alignat*}
where $\tau$ is the timestep size, and $L(t,u)$ is the evaluation of the time derivative. The limiting framework we propose ensures the solution remains in the convex admissible set \eqref{eq:admissbleset} for a single forward Euler timestep. SSP schemes are convex combinations of first-order forward Euler steps, and since convex combinations of the solution remain in a convex set, SSPRK time-steppers preserve positivity \cite{shu1988efficient}. 

\section{Numerical experiments}
\label{sec:exp}
In this section, we present various numerical experiments to verify the convergence and robustness of the proposed limiting strategy \footnote{The codes used for the experiments are available at \\  https://github.com/yiminllin/ESDG-PosLimit/tree/main/examples/IDP}. All of the simulations advance in time using the third-order SSP Runge Kutta method introduced in Section \ref{sec:time}. The timestep size is determined from the timestep condition in \eqref{eq:CFL}: 
\begin{align}\label{eq:deltat}
    \Delta t = \text{CFL} \min_i \frac{\fnt{m}_i}{2\lambda_i},
\end{align}
where CFL is a user specified parameter. For all numerical experiments, \rzero{if not specified,} we set the positivity threshold as $\epsilon_0 = 10^{-14}$ in \eqref{eq:zhangbeta} \rzero{and \eqref{eq:posbound}}. 

For all numerical experiments on simplicial elements, we construct sparse low order SBP operators by building the adjacency matrix and graph Laplacian in equation \eqref{eq:adjacency} with $\beta = 1$ and $\alpha = 4, 2.5, 3.5, 3.5$ for polynomial degrees $N = 1,2,3,4$ respectively. We note that the selection of parameters are not optimal, and numerical results are not sensitive to the choice of the parameters. 

All entropy stable schemes in this work utilize the entropy conservative numerical flux introduced by Chandreshekar \cite{chandrashekar2013kinetic}. We evaluate the logarithmic mean with Ismail and Roe's numerically stable expansion \cite{ismail2009affordable}. 

All limiting strategies use the modified viscosity coefficients introduced in Theorem \ref{thm:lowes}. We estimate the maximum wavespeed associated with the 1D Riemann problems by the Davis estimate \cite{davis1988simplified}\footnote{\rone{We note that the Davis estimate does not provide a robust upper bound of the maximum wavespeed. Appendix B of \cite{guermond2016fast} describes an example where the Rusanov estimate underestimates the maximum wavespeed, which could potentially compromise the positivity-preservation and semi-discrete entropy stability of the low order discretization \eqref{eq:lowpp}. While we have not observed issues using Rusanov estimate in our numerical experiments, we hope to explore more robust estimates of the maximum wavespeed \cite{guermond2016fast} in future works.}}
\begin{align}
    \lambda_{\max} \LRp{\vec{u}_L,\vec{u}_R,\vec{n}} = \max\LRp{\left|\vec{u}_L \cdot \vec{n}\right| + \sqrt{\gamma \frac{p_L}{\rho_L}}, \left|\vec{u}_R \cdot \vec{n}\right| + \sqrt{\gamma \frac{p_R}{\rho_R}}}. \label{eq:maxwvspd}
\end{align}

\rzero{
In all numerical experiments, we study the generalized positivity bound proposed in \eqref{eq:generalizedposbound}. All reported values of $\zeta$ in the numerical experiments refer to the relaxation factor in the generalized positivity bound.

Finally, we note that to enforce invsicid slip wall boundary conditions, we utilize the exact solution of the Riemann problem derived by Vegt and Ven \cite{van2002slip}, which was shown to be entropy stable \cite{hindenlang2020stability}. We point out that this imposition of boundary condition is not provably positivity preserving, but it greatly improves the robustness near wall compared to other weak impositions of reflective wall boundary conditions \cite{chen2017entropy, chan2022entropy}.
}

\subsection{Convergence tests}

The high order accuracy of entropy stable DG discretizations has been demonstrated in various works \cite{carpenter2014entropy, chan2018discretely}, and the convergence of low order positivity preserving methods for the compressible Euler equations was explored in detail in \cite{guermond2016invariant, pazner2021sparse}. Thus, in this section, we focus on studying the behaviour of the elementwise (\rzero{Zhang-Shu} type) limiting on test cases where the entropy stable discretization fails due to the loss of positivity. 

Most test cases in this section admit analytical solutions, and we evaluate the relative $L^p$ errors in the conservative variables using quadrature:
\begin{align}
    \frac{\LRs{\fnt{1}^T\fnt{M}\LRp{\vec{\rho}^n -\vec{\rho}}^p}^{1/p}}{\LRs{\fnt{1}^T\fnt{M}\vec{\rho}^p}^{1/p}} +     \frac{\LRs{\fnt{1}^T\fnt{M}\LRp{\vec{\rho u}^n -\vec{\rho u}}^p}^{1/p}}{\LRs{\fnt{1}^T\fnt{M}\LRp{\vec{\rho u}}^p}^{1/p}} +     \frac{\LRs{\fnt{1}^T\fnt{M}\LRp{\vec{\rho v}^n -\vec{\rho v}}^p}^{1/p}}{\LRs{\fnt{1}^T\fnt{M}\LRp{\vec{\rho v}}^p}^{1/p}} +     \frac{\LRs{\fnt{1}^T\fnt{M}\LRp{\vec{E}^n -\vec{E}}^p}^{1/p}}{\LRs{\fnt{1}^T\fnt{M}\vec{E}^p}^{1/p}} , \label{eq:error}
\end{align}
where the numerical solutions and exact solutions evaluated at quadrature nodes are denoted by $\vec{u}^n$ and $\vec{u}$ respectively.

\subsubsection{Leblanc shocktube}
\label{sec:leblanc}

We first consider the Leblanc shocktube problem for the compressible Euler equations. This is a challenging Riemann problem and entropy stable discretizations without limiting fail due to negative density and pressure. The domain is $[0,1]$, and the initial condition is
\begin{align}
    \vec{u}_0 \LRp{x} = \begin{cases}
      \vec{u}_L, \quad &x < x_0 \\
      \vec{u}_R, \quad &\text{otherwise}
    \end{cases}, \quad \vec{u}_L = \begin{bmatrix}
    \rho_L \\
    u_L \\
    p_L \\
    \end{bmatrix} = \begin{bmatrix}
    1.0 \\
    0.0 \\
    \LRp{\gamma-1}0.1 \\
    \end{bmatrix},\quad \vec{u}_R = \begin{bmatrix}
    \rho_R \\
    u_R \\
    p_R \\
    \end{bmatrix} = \begin{bmatrix}
    10^{-3} \\
    0.0 \\
    \LRp{\gamma-1}10^{-10} \\
    \end{bmatrix}, \label{eq:leblanc}
\end{align}
where $x_0 = 0.33, \gamma=\frac{5}{3}$. We set the exterior values at endpoints of the domain $x= 0, 1$ to be $\vec{u}_L, \vec{u}_R$ to enforce inhomogeneous Dirichlet boundary condition. The test case has an exact solution of form \cite{guermond2016invariant}
\begin{alignat}{4}
    & &&\vec{u} \LRp{x}  = \begin{cases}
      \vec{u}_L, \qquad &\xi \leq -\frac{1}{3} \\
      \begin{pmatrix}
      \rho^{**} & v^{**} & p^{**}
      \end{pmatrix}
      &-\frac{1}{3} < \xi \leq \lambda_1\\   
      \begin{pmatrix}
      \rho^{*}_L & \ v^{*} & p^{*}
      \end{pmatrix}
      &\lambda_1 < \xi \leq v^*\\  
      \begin{pmatrix}
      \rho^{*}_R & \ v^{*} & p^{*}
      \end{pmatrix}
      &v^* < \xi \leq \lambda_3\\
      \vec{u}_R, \qquad &\lambda_3 < \xi
    \end{cases}, \label{eq:lablancinit}\\
    \rho^{**} &= \LRp{0.75-0.75\xi}^3,\quad &&v^{**} = 0.75\LRp{\frac{1}{3}+\xi},\quad &&p^{**} = \frac{1}{15}\LRp{0.75-0.75\xi}^5, \nonumber \\
    \rho_L^* &= 5.40793353493162\times 10^{-2}, \quad &&\rho_R^* = 3.99999806043000 \times 10^{-3} ,\quad &&p^* = 0.515577927650970\times 10^{-3}, \nonumber \\
    v^* &= 0.621838671391735,\quad
    &&\lambda_1 = 0.495784895188979, \quad &&\lambda_3 = 0.829118362533470,\nonumber
\end{alignat}
where $\xi = \frac{x-x_0}{t}$. We discretize the domain by uniform intervals, set $\text{CFL} = 0.5$, and run the simulations until $T = 2/3$. We calculate the $L^1$ errors of different strategies for polynomial degrees $N = 2, 5$ and meshes with $K$ uniform elements.  

\rzero{
We compare the $L^1$ error and the convergence rate of the low order solution and the solutions using limiting with $\zeta = 0.1$ and $0.5$. As Table \ref{tab:Leblancconv} shows, all strategies are first order accurate, which is optimal for this test case. Figure \ref{fig:Leblanc.1} compares two different limited solutions with $\zeta = 0.1$ and $600$ degrees of freedom. 

Figure \ref{fig:Leblanc.1} and Table \ref{tab:Leblancconv} suggest that higher order approximations appear to produce less oscillatory solutions and generates more accurate results compared with lower order solutions. However, Figure \ref{fig:Leblanclow} compares low order solutions for a mesh with $600$ degrees of freedom. The results are nearly identical for polynomial degrees $N = 2$ and $N = 5$. The results of Table \ref{tab:Leblanlow} verify that the quality of the low-order solutions using sparsified SBP operators does not degrade as we increase the polynomial order \cite{pazner2021sparse}.}\footnote{\rone{We note that the low order solution does exhibit a staircasing-like effect near element boundaries. This has been observed in other discretizations which utilize the ``bar state'' reformulation \eqref{eq:lpprewrite} (personal communication with Guermond, Popov, and Maier, March 2022). We speculate that this may have to do with the magnitudes of the diagonal mass matrix entries varying over a high order element, as they decrease near element boundaries.} 
}

\begin{table}[!htb]
\qquad \qquad
    \begin{subtable}{.5\linewidth}
      \centering
\begin{tabular}{|c|c|c|c|c|} 
 \hline
  & \multicolumn{2}{|c|}{Low order, $N=2$} & \multicolumn{2}{|c|}{Low order, $N=5$} \\
 \hline
 K & $L^1$ error & Rate & $L^1$ error & Rate \\  
 \hline
 50  & $2.115\times 10^{-1}$ &       & $1.705\times 10^{-1}$ &      \\ 
 100 & $1.664\times 10^{-1}$ & 0.35  & $1.116\times 10^{-1}$ & 0.61 \\
 200 & $1.117\times 10^{-1}$ & 0.57  & $7.382\times 10^{-2}$ & 0.60 \\
 400 & $7.275\times 10^{-2}$ & 0.62  & $4.627\times 10^{-2}$ & 0.67 \\
 800 & $4.610\times 10^{-2}$ & 0.66  & $2.868\times 10^{-2}$ & 0.69 \\ 
 \hline
\end{tabular}
        \caption{Low order method}
        \label{tab:Leblanlow}
    \end{subtable}%
    \newline
\begin{subtable}{.5\linewidth}
      \centering
      \rzero{
\begin{tabular}{|c|c|c|c|c|} 
 \hline
 & \multicolumn{2}{|c|}{Limited, $N=2$} & \multicolumn{2}{|c|}{Limited, $N=5$}\\
 \hline
 K   & $L^1$ error & Rate & $L^1$ error & Rate \\  
 \hline
 50  & $8.058\times 10^{-2}$ &      & $5.070\times 10^{-2}$ & \\ 
 100 & $3.506\times 10^{-2}$ & 1.20 & $1.236\times 10^{-2}$ & 2.04\\
 200 & $1.351\times 10^{-2}$ & 1.37 & $3.660\times 10^{-3}$ & 1.76\\
 400 & $6.193\times 10^{-3}$ & 1.13 & $1.227\times 10^{-3}$ & 1.58\\
 800 & $2.953\times 10^{-3}$ & 1.07 & $6.333\times 10^{-4}$ & 0.95\\ 
 \hline
\end{tabular}
        \caption{Elementwise (Zhang-Shu type) limiting with $\zeta = 0.1$}
        }
        \label{tab:Leblancli1}
    \end{subtable}%
    \begin{subtable}{.5\linewidth}
      \centering
      \rzero{
\begin{tabular}{|c|c|c|c|c|} 
 \hline
 & \multicolumn{2}{|c|}{Limited, $N=2$} & \multicolumn{2}{|c|}{Limited, $N=5$}\\
 \hline
 K   & $L^1$ error & Rate & $L^1$ error & Rate \\  
 \hline
 50  & $8.681\times 10^{-2}$ &      & $5.956\times 10^{-2}$ & \\ 
 100 & $3.658\times 10^{-2}$ & 1.25 & $1.436\times 10^{-2}$ & 2.05\\
 200 & $1.329\times 10^{-2}$ & 1.46 & $3.630\times10^{-3}$  & 1.98\\
 400 & $6.015\times 10^{-3}$ & 1.14 & $1.129\times 10^{-3}$ & 1.69\\
 800 & $2.910\times 10^{-3}$ & 1.05 & $5.889\times 10^{-4}$ & 0.94\\ 
 \hline
\end{tabular}
        \caption{Elementwise (Zhang-Shu type) limiting with $\zeta = 0.5$}
        }
        \label{tab:Leblancli2}
    \end{subtable}
    \caption{Leblanc shocktube convergence tables}
    \label{tab:Leblancconv}
\end{table}

\begin{figure}[!htb]
\centering
\begin{subfigure}{.5\textwidth}
  \centering
  \includegraphics[width=.9\linewidth]{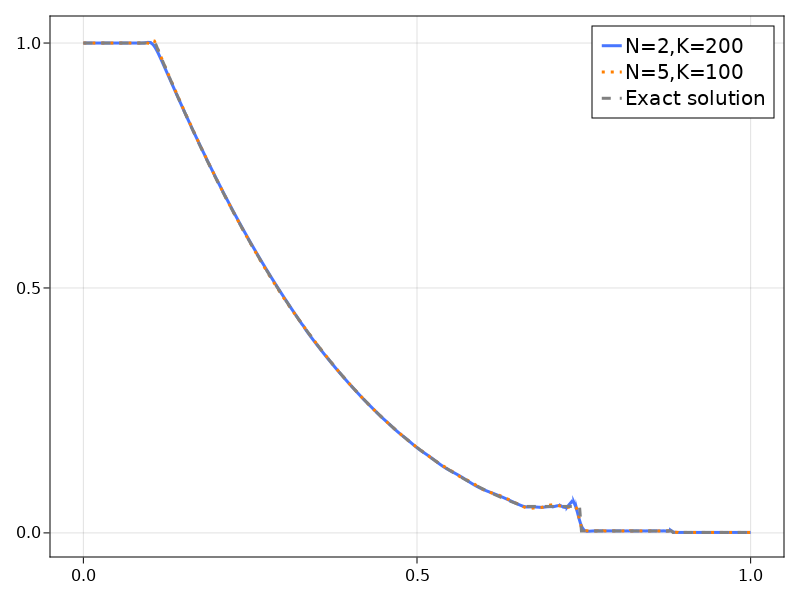}
  \caption{Solutions}
\end{subfigure}%
\begin{subfigure}{.5\textwidth}
  \centering
  \includegraphics[width=.9\linewidth]{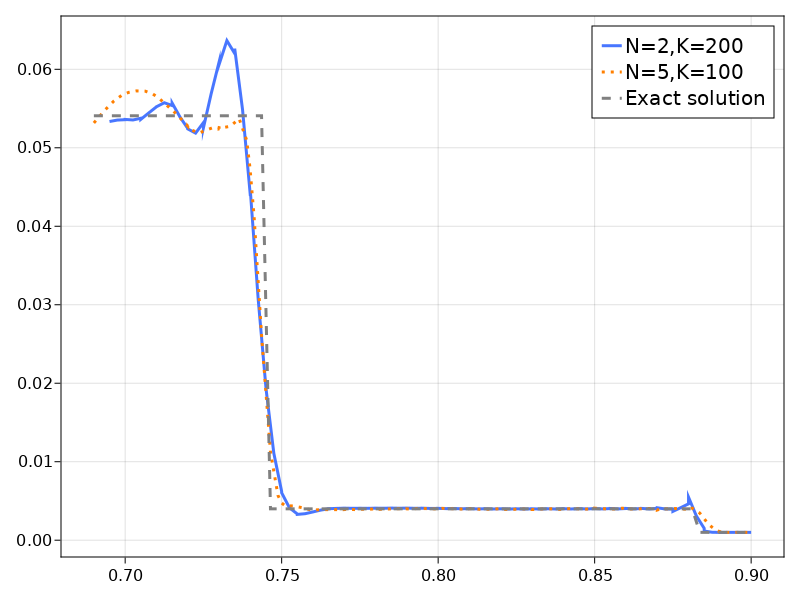}
  \caption{Zoom in view}
\end{subfigure}
\caption{Leblanc shocktube, Elementwise (Zhang-Shu type) limiting with $\zeta = 0.1$}
\label{fig:Leblanc.1}
\end{figure}

\begin{figure}[!htb]
\centering
\begin{subfigure}{.5\textwidth}
  \centering
  \includegraphics[width=.9\linewidth]{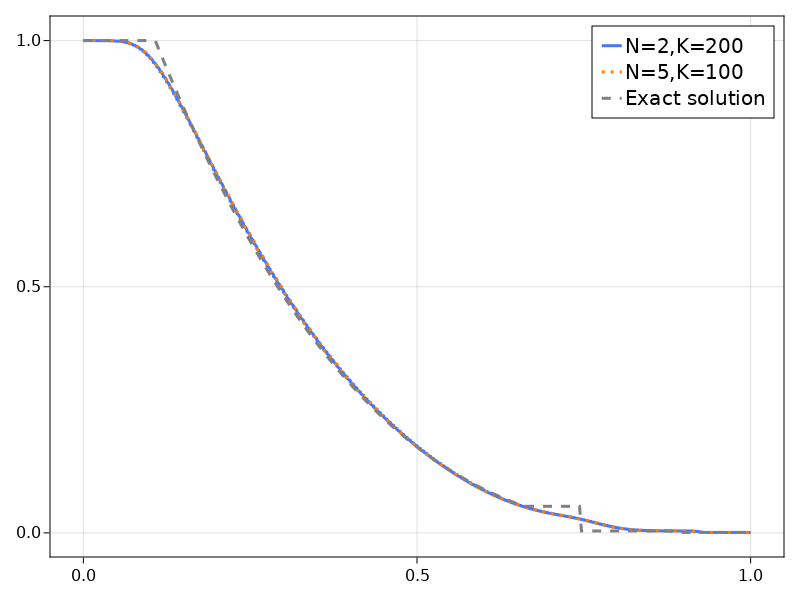}
  \caption{Solutions}
\end{subfigure}%
\begin{subfigure}{.5\textwidth}
  \centering
  \includegraphics[width=.9\linewidth]{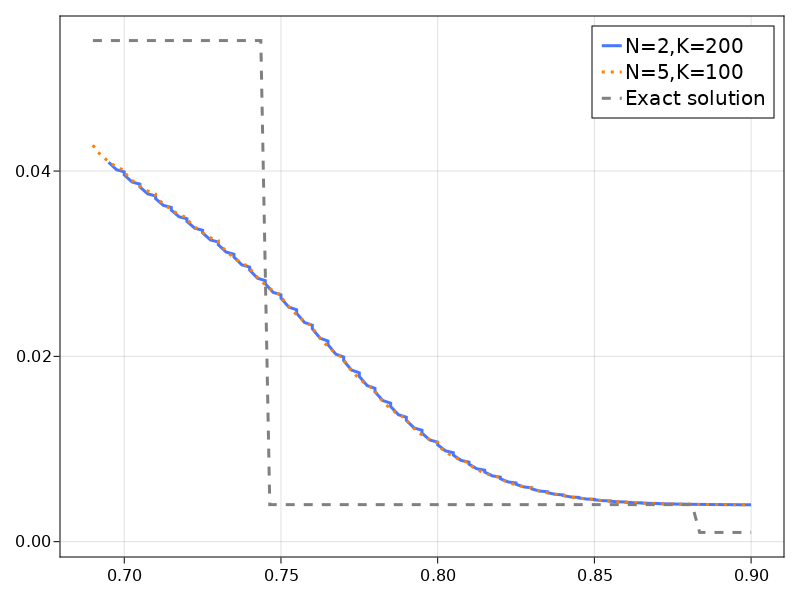}
  \caption{Zoom in view}
\end{subfigure}
\caption{Leblanc shocktube, low order solutions}
\label{fig:Leblanclow}
\end{figure}

\subsubsection{Viscous shockwave}
\label{sec:viscshockwave}

We next consider a viscous \rzero{shockwave} problem for the compressible Navier-Stokes equations \cite{guermond2021second}. The test case starts from a steady state solution of the compressible Navier-Stokes equation, and the solution is translated with constant velocity $u_\infty$. The domain is $[-1,1.5]$, and the analytical solution is defined as
\begin{align}
    \vec{u} \LRp{x,t} &= \begin{bmatrix}
        \rho\LRp{\xi} \\
        \rho\LRp{\xi}\LRp{u_\infty + u \LRp{\xi}}  \\
        \rho\LRp{\xi} \LRp{e \LRp{\xi} + \frac{1}{2} \LRp{u_\infty+u\LRp{\xi}}^2}
    \end{bmatrix}, \qquad \xi = x - u_\infty t, \nonumber \\
    \rho\LRp{x} &= \frac{m_0}{u\LRp{x}}, \qquad e\LRp{x} = \frac{1}{2\gamma} \LRp{\frac{\gamma+1}{\gamma-1} u_0^2 - u\LRp{x}^2}, \label{eq:viscshockwave}
\end{align}
where the velocity profile $u\LRp{x}$ is defined implicitly by the equation
\begin{align}
    x = \frac{2\kappa}{\LRp{\gamma+1}m_0} \LRs{\frac{u_L}{u_L-u_R} \log\LRp{\frac{u_L - u\LRp{x}}{u_L-u_0}} - \frac{u_R}{u_L-u_R} \log\LRp{\frac{u\LRp{x}-u_R}{u_0 - u_R}}}. \label{eq:viscshockwaveu}
\end{align}
Here, $u_L,u_R$ denote the velocity at $-\infty,\infty$ respectively. We assume the velocity at $\infty$ depends on the pre-shock Mach number $M_0$, $u_R = \frac{\gamma-1+2/M_0^2}{\gamma+1}$. We define $u_0 = \sqrt{u_L u_R}$ and $\text{Pr} = \frac{3}{4}$.

First, we verify the convergence of the low order positivity preserving discretizations. We set the parameters as follow: $\gamma = 1.4, \mu = 0.01, u_\infty = 0.2, u_L = 1.0, m_0 = 1.0, M_0 = 3$. We enforce inhomogeneous Dirichlet boundary condition on boundaries. We discretize the domain into uniform intervals, set $\text{CFL} = 0.5$, and run each simulation until $T = 1.0$. We compute the $L^1$ and $ L^2$ errors of the low order positivity-preserving scheme. As Table \ref{tab:viscshocktubeM03} shows, the low-order method is first order accurate for $N = 2,3,4$.

\begin{table}[!htb]
\begin{tabular}{|c|c|c|c|c|} 
 \hline
  \multicolumn{5}{|c|}{$N=2$} \\
 \hhline{|=|=|=|=|=|}
 K & $L^1$ error & Rate & $L^2$ error & Rate \\  
 \hline
 50  & $7.69\times 10^{-2}$ &        & $1.73\times 10^{-1}$ &        \\ 
 100 & $3.98\times 10^{-2}$ & $0.95$ & $1.05\times 10^{-1}$ & $0.72$ \\ 
 200 & $2.17\times 10^{-2}$ & $0.87$ & $6.34\times 10^{-2}$ & $0.73$ \\ 
 400 & $1.14\times 10^{-2}$ & $0.93$ & $3.62\times 10^{-2}$ & $0.81$ \\ 
 \hline
 \multicolumn{5}{|c|}{$N=3$} \\
 \hhline{|=|=|=|=|=|}
 50  & $5.97\times 10^{-2}$ &        & $1.42\times 10^{-1}$ &        \\
 100 & $3.32\times 10^{-2}$ & $0.84$ & $9.01\times 10^{-2}$ & $0.66$ \\
 200 & $1.70\times 10^{-2}$ & $0.96$ & $5.21\times 10^{-2}$ & $0.79$ \\
 400 & $8.70\times 10^{-3}$ & $0.97$ & $2.86\times 10^{-2}$ & $0.86$ \\
 \hline
 \multicolumn{5}{|c|}{$N=4$} \\
 \hhline{|=|=|=|=|=|}
  50 & $5.16\times 10^{-2}$ &        & $1.26\times 10^{-1}$ & \\  
 100 & $2.70\times 10^{-2}$ & $0.93$ & $7.70\times 10^{-2}$ & $0.71$\\
 200 & $1.39\times 10^{-2}$ & $0.96$ & $4.37\times 10^{-2}$ & $0.82$\\
 400 & $7.09\times 10^{-3}$ & $0.97$ & $2.36\times 10^{-2}$ & $0.89$\\
 \hline
\end{tabular}
  \caption{Viscous shockwave $M_0 = 3$, Low order solutions}%
\label{tab:viscshocktubeM03}
\end{table}


\rone{
Next we verify the convergence of the elementwise limited solutions. First, we set the parameters as before: $\gamma = 1.4, \mu = 0.01, u_\infty = 0.2, u_L = 1.0, m_0 = 1.0, M_0 = 3$. We discretize the domain using uniform intervals, set $\text{CFL} = 0.5$, and run the simulations until $T = 1$. For this case, we observe the limiting is activated only when the mesh is very coarse ($K = 10$), and we recover high order rates of convergence as we refine the mesh. Tables ~\ref{tab:1Dviscwave.1} and \ref{tab:1Dviscwave.5} show $O(h^N)$ to $O(h^{N+1/2})$ convergence rate when the relaxation factor is taken to be $\zeta = 0.1$ or $\zeta = 0.5$. 
}

\begin{table}[!htb]
\centering
\rone{
\begin{tabular}{|c|c|c |c|c|c|c|c|c|} 
 \hline
 & \multicolumn{2}{|c|}{$N=2$} & \multicolumn{2}{|c|}{$N=3$} & \multicolumn{2}{|c|}{$N=4$} & \multicolumn{2}{|c|}{$N=5$}\\
 \hline
 K & $L^2$ error & Rate & $L^2$ error & Rate & $L^2$ error & Rate & $L^2$ error & Rate \\  
 \hline
 10  & $1.089\times 10^{-1}$  &      & $6.245\times 10^{-2}$ &      & $6.281\times 10^{-2}$ &      & $4.378\times 10^{-2}$ & \\ 
 20  & $3.486\times 10^{-2}$  & 1.64 & $3.022\times 10^{-2}$ & 1.05 & $7.368\times 10^{-3}$ & 3.09 & $6.972\times 10^{-3}$ & 2.65\\
 40  & $1.066\times 10^{-2}$  & 1.71 & $3.847\times 10^{-3}$ & 2.97 & $1.370\times 10^{-3}$ & 2.43 & $9.533\times 10^{-4}$ & 2.87\\
 80  & $1.821\times 10^{-3}$  & 2.55 & $4.612\times 10^{-4}$ & 3.06 & $6.458\times 10^{-5}$ & 4.41 & $4.667\times 10^{-5}$ & 4.35\\
 160 & $2.604\times 10^{-4}$  & 2.81 & $2.873\times 10^{-5}$ & 4.00 & $2.579\times 10^{-6}$ & 4.65 & $6.652\times 10^{-7}$ & 6.13\\ 
 \hline
\end{tabular}
}
\caption{Viscous shockwave $M_0 = 3.0$, Elementwise (Zhang-Shu type) limiting with $\zeta = 0.1$}
\label{tab:1Dviscwave.1}
\end{table}

\begin{table}[!htb]
\centering
\rone{
\begin{tabular}{|c|c|c |c|c|c|c|c|c|} 
 \hline
 & \multicolumn{2}{|c|}{$N=2$} & \multicolumn{2}{|c|}{$N=3$} & \multicolumn{2}{|c|}{$N=4$} & \multicolumn{2}{|c|}{$N=5$}\\
 \hline
 K & $L^2$ error & Rate & $L^2$ error & Rate & $L^2$ error & Rate & $L^2$ error & Rate \\  
 \hline
 10  & $9.139\times 10^{-2}$  &      & $5.874\times 10^{-2}$ &      & $6.324\times 10^{-2}$ &      & $4.370\times 10^{-2}$ & \\ 
 20  & $3.481\times 10^{-2}$  & 1.39 & $3.022\times 10^{-2}$ & 0.96 & $7.368\times 10^{-3}$ & 3.10 & $6.972\times 10^{-3}$ & 2.65\\
 40  & $1.066\times 10^{-2}$  & 1.71 & $3.847\times 10^{-3}$ & 2.97 & $1.370\times 10^{-3}$ & 2.43 & $9.533\times 10^{-4}$ & 2.87\\
 80  & $1.821\times 10^{-3}$  & 2.55 & $4.612\times 10^{-4}$ & 3.06 & $6.458\times 10^{-5}$ & 4.41 & $4.667\times 10^{-5}$ & 4.35\\
 160 & $2.604\times 10^{-4}$  & 2.81 & $2.873\times 10^{-5}$ & 4.00 & $2.579\times 10^{-6}$ & 4.65 & $6.652\times 10^{-7}$ & 6.13\\ 
 \hline
\end{tabular}
}
\caption{Viscous shockwave $M_0 = 3.0$, Elementwise (Zhang-Shu type) limiting with $\zeta = 0.5$}
\label{tab:1Dviscwave.5}
\end{table}

\rzero{
Next, we set the viscous shock tube parameters as: $\gamma = 1.4, \mu = 0.001, u_\infty = 0.2, u_L = 1.0, m_0 = 1.0, M_0 = 20.0$ and enforce the inhomogeneous Dirichlet boundary condition on boundaries. Under this set of parameters, high-order entropy-stable discretizations will fail due to negative density and pressure on coarse meshes.

We compare the $L^1$ error and the convergence rate of the low-order solution and the solutions using elementwise (Zhang-Shu type) limiting with $\zeta = 0.1$ and $\zeta = 0.5$. As Table \ref{tab:viscwavelow} shows, the low order solution is first order accurate. Elementwise limited solutions yield between $O\LRp{h^N}$ and $O\LRp{h^{N+1/2}}$ convergence rates when the limiting is activated on coarse meshes. When $K = 1600$, the limited solutions are virtually identical to unlimited high order ESDG solutions, for which we have verified high order convergence in \cite{chan2022entropy}. Figure \ref{fig:viscwave.1} compares different elementwise limited solutions with $\zeta = 0.1$. As in the Leblanc shocktube test case, we observe higher-order approximations result in less oscillatory solutions. 
}

\begin{table}[!htb]
\qquad\qquad
    \begin{subtable}{.5\linewidth}
      \centering
\begin{tabular}{|c|c|c|c|c|} 
 \hline
 & \multicolumn{2}{|c|}{Low order, $N=2$} & \multicolumn{2}{|c|}{Low order, $N=3$}\\
 \hline
 K & $L^1$ error & Rate & $L^1$ error & Rate\\  
 \hline
 50  & $9.000\times 10^{-2}$ &      & $7.061\times 10^{-2}$ &      \\ 
 100 & $4.739\times 10^{-2}$ & 0.93 & $3.711\times 10^{-2}$ & 0.92 \\
 200 & $2.455\times 10^{-2}$ & 0.95 & $1.939\times 10^{-2}$ & 0.94 \\
 400 & $1.281\times 10^{-2}$ & 0.94 & $1.006\times 10^{-2}$ & 0.95 \\
 800 & $6.599\times 10^{-3}$ & 0.96 & $5.179\times 10^{-3}$ & 0.96 \\ 
1600 & $3.394\times 10^{-3}$ & 0.96 & $2.699\times 10^{-3}$ & 0.94 \\ 
 \hline
\end{tabular}
        \caption{Low order method}
        \label{tab:viscwavelow}
    \end{subtable}%
    \newline
    \rzero
    {
        \begin{subtable}{.5\linewidth}
      \centering
\begin{tabular}{|c|c|c|c|c|} 
 \hline
 & \multicolumn{2}{|c|}{Limited, $N=2$} & \multicolumn{2}{|c|}{Limited, $N=3$}\\
 \hline
 K & $L^1$ error & Rate & $L^1$ error & Rate\\  
 \hline
 50  & $4.753\times 10^{-2}$ &      & $3.272\times 10^{-2}$ & \\ 
 100 & $3.323\times 10^{-2}$ & 0.52 & $1.568\times 10^{-2}$ & 1.06\\
 200 & $1.349\times 10^{-2}$ & 1.30 & $6.788\times 10^{-3}$ & 1.21\\
 400 & $3.862\times 10^{-3}$ & 1.80 & $1.009\times 10^{-3}$ & 2.75\\
 800 & $5.768\times 10^{-4}$ & 2.74 & $1.163\times 10^{-4}$ & 3.12\\ 
1600 & $8.836\times 10^{-5}$ & 2.71 & $1.269\times 10^{-5}$ & 3.20\\ 
 \hline
\end{tabular}
        \caption{Elementwise (Zhang-Shu type) limiting with $\zeta = 0.1$}
        \label{tab:viscwavezeta1}
    \end{subtable}%
    \begin{subtable}{.5\linewidth}
      \centering
\begin{tabular}{|c|c|c|c|c|} 
 \hline
 & \multicolumn{2}{|c|}{Limited, $N=2$} & \multicolumn{2}{|c|}{Limited, $N=3$}\\
 \hline
 K & $L^1$ error & Rate & $L^1$ error & Rate\\  
 \hline
 50  & $4.209\times 10^{-2}$ &      & $3.987\times 10^{-2}$ & \\ 
 100 & $2.305\times 10^{-2}$ & 0.87 & $2.071\times 10^{-2}$ & 0.94\\
 200 & $9.858\times 10^{-2}$ & 1.23 & $6.749\times 10^{-3}$ & 1.62\\
 400 & $3.382\times 10^{-3}$ & 1.54 & $1.278\times 10^{-3}$ & 2.40\\
 800 & $5.765\times 10^{-4}$ & 2.55 & $1.163\times 10^{-4}$ & 3.45\\ 
1600 & $8.836\times 10^{-5}$ & 2.71 & $1.269\times 10^{-5}$ & 3.20\\ 
 \hline
\end{tabular}
        \caption{Elementwise (Zhang-Shu type) limiting with $\zeta = 0.5$}
        \label{tab:viscwavezeta5}
    \end{subtable}
    }
        \caption{Viscous shockwave $M_0 = 20.0$, convergence tables}
    \label{tab:viscwave}
\end{table}

\begin{figure}[!htb]
\centering
\begin{subfigure}{.5\textwidth}
  \centering
  \includegraphics[width=.9\linewidth]{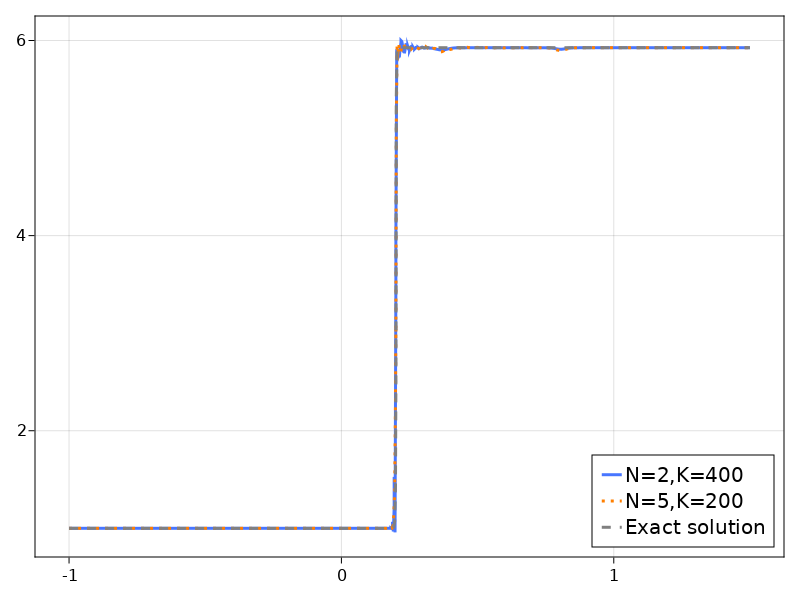}
  \caption{Solutions}
\end{subfigure}%
\begin{subfigure}{.5\textwidth}
  \centering
  \includegraphics[width=.9\linewidth]{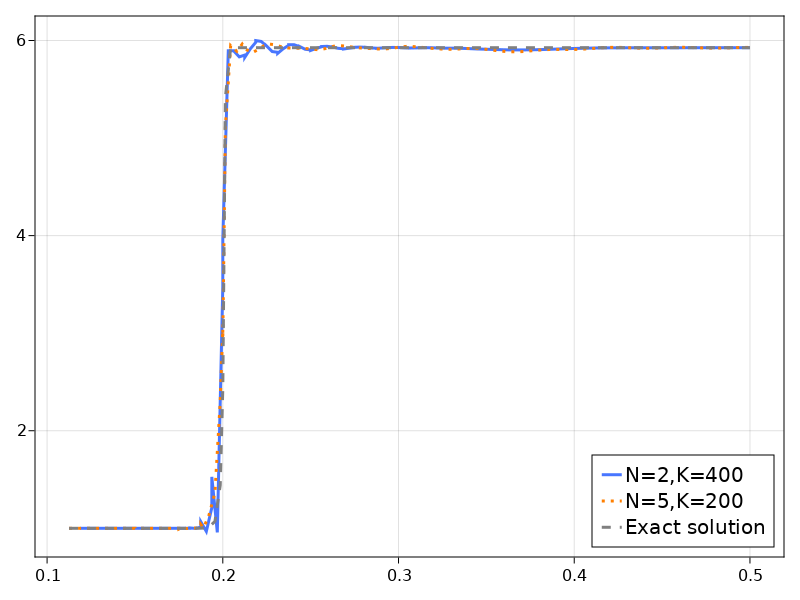}
  \caption{Zoom in view}
\end{subfigure}
\caption{Viscous shockwave, Elementwise (Zhang-Shu type) limiting with $\zeta = 0.1$}
\label{fig:viscwave.1}
\end{figure}


We have also performed convergence studies using the viscous shockwave for the 2D compressible Navier-Stokes equations. These results are included in Appendix~\ref{app:expCNSconv}.

\rone{
\subsection{Sine-shock interaction}
\label{sec:shuosher}

We now consider the sine-shock interaction problem \cite{shu1989efficient}. This problem illustrates the behaviour of the proposed limiting strategy for both smooth and non-smooth solution features. The initial condition is 
\begin{align}
    \vec{u}\LRp{x} = \begin{cases}
        \vec{u}_L, \quad &x < 4.0\\
        \vec{u}_R, \quad &\text{otherwise}
    \end{cases}, \quad \vec{u}_L = \begin{bmatrix}
    \rho_L \\
    u_L \\
    p_L \\
    \end{bmatrix} = \begin{bmatrix}
    3.857143 \\
    2.629369 \\
    10.3333 \\
    \end{bmatrix},\quad \vec{u}_R = \begin{bmatrix}
    \rho_R \\
    u_R \\
    p_R \\
    \end{bmatrix} = \begin{bmatrix}
    1+.2\sin\LRp{5x} \\
    0.0 \\
    1.0 \\
    \end{bmatrix}. \label{eq:shuosher}
\end{align}
We assign $\vec{u}_L$ as exterior value on the left boundary $x = -5$, and impose no boundary condition on the right boundary $x = 5$. 

We discretize the domain by $K = 64$ and $K = 128$ uniform intervals, and use polynomial degree $N = 3$ and ${\rm CFL} = 0.5$. We run each simulation until $T = 1.8$, , including a reference solution using a $5$-th order WENO scheme with 25000 cells \cite{shu2009high}. Each mesh is run with different relaxation factors $\zeta$. From Figure \ref{fig:shuosher64} and \ref{fig:shuosher128}, we observe all limited solutions are close to the reference solution. Because we only limit for positivity and not for monotonicity or a minimum entropy principle, we observe spurious oscillations near shocks. These oscillations are mitigated somewhat by using a larger relaxation factor.

\begin{figure}[!htb]
\centering
\begin{subfigure}{.5\textwidth}
  \centering
  \includegraphics[width=.9\linewidth]{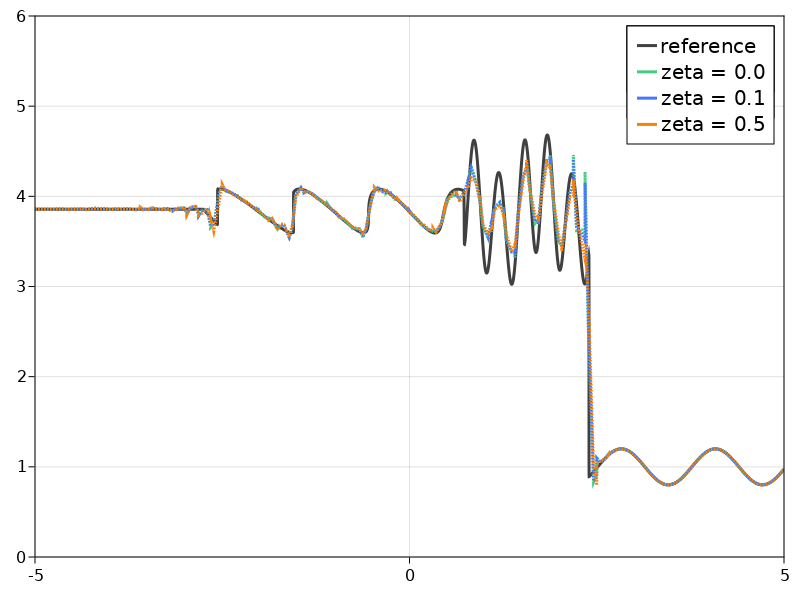}
  \caption{Solutions}
\end{subfigure}%
\begin{subfigure}{.5\textwidth}
  \centering
  \includegraphics[width=.9\linewidth]{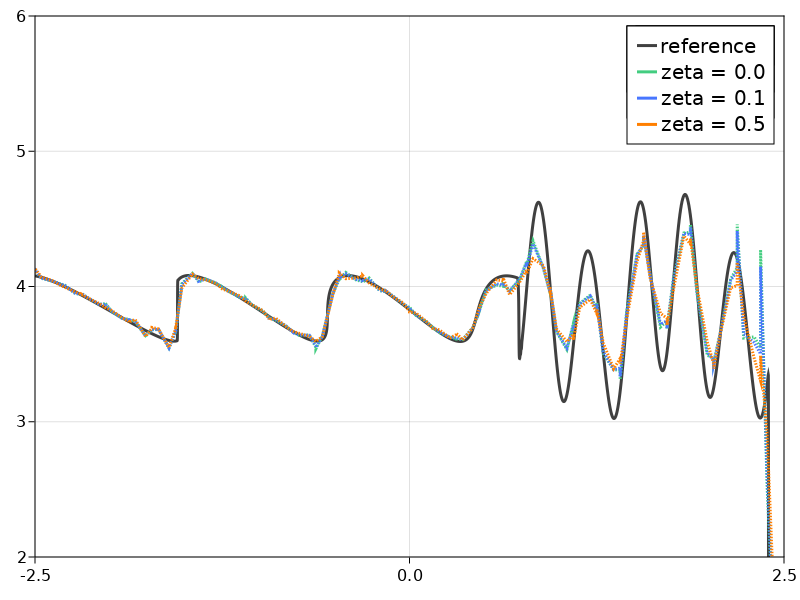}
  \caption{Zoom in view}
\end{subfigure}
\caption{Sine-shock interaction $N = 3, K = 64$}
\label{fig:shuosher64}
\end{figure}

\begin{figure}[!htb]
\centering
\begin{subfigure}{.5\textwidth}
  \centering
  \includegraphics[width=.9\linewidth]{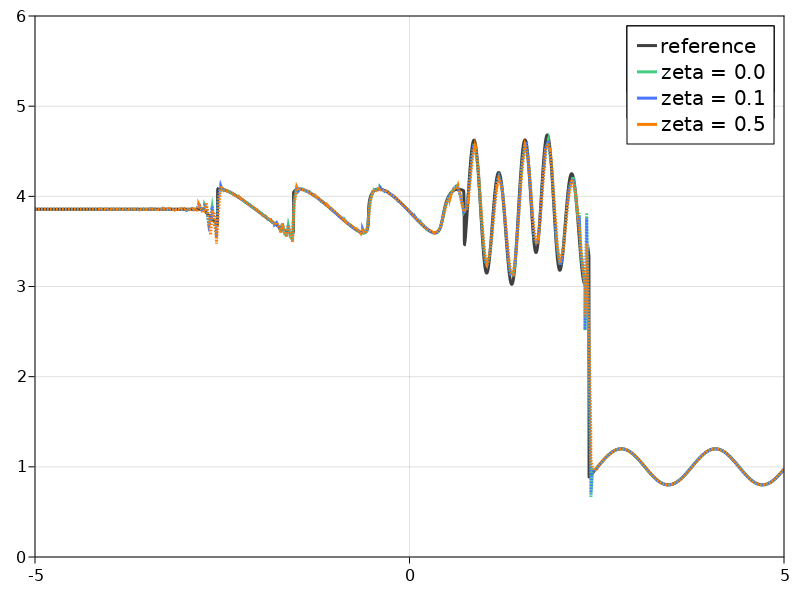}
  \caption{Solutions}
\end{subfigure}%
\begin{subfigure}{.5\textwidth}
  \centering
  \includegraphics[width=.9\linewidth]{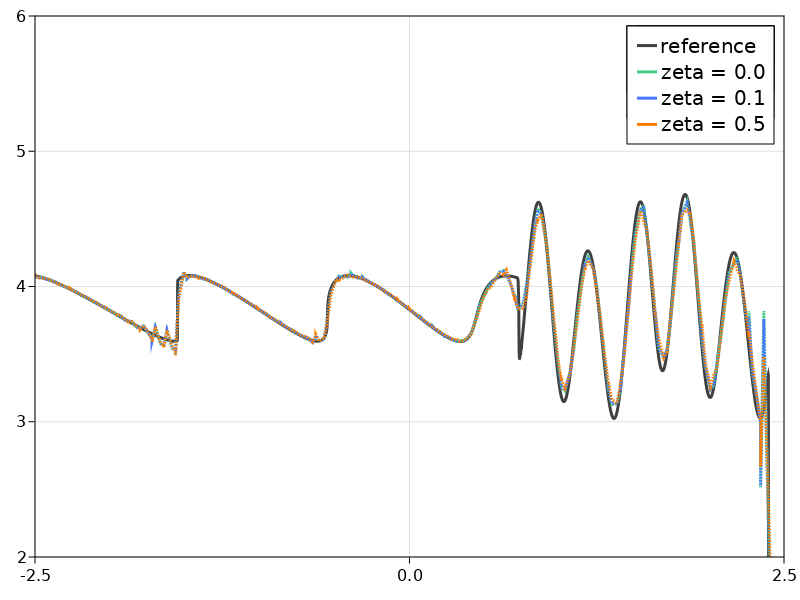}
  \caption{Zoom in view}
\end{subfigure}
\caption{Sine-shock interaction $N = 3, K = 128$}
\label{fig:shuosher128}
\end{figure}

\subsubsection{Isentropic vortex}
\label{sec:vortex}

We now test the convergence of the positivity-preserving limited scheme in 2D. We examine the convergence of the elementwise limited solutions in 2D using the isentropic vortex test case \cite{shu1988efficient}. The domain is $[0,20]\times[0,10]$, and the exact solution in primitive variables is defined as
\begin{align*}
    \vec{u} \LRp{x,t} = \begin{bmatrix}
        \rho\LRp{x,t} \\ u\LRp{x,t} \\ v\LRp{x,t} \\ p\LRp{x,t} 
    \end{bmatrix} = \begin{bmatrix}
    \LRs{1-\frac{1}{8\gamma \pi^2}\LRp{.5\LRp{\gamma-1}\LRp{\beta e^{1-r\LRp{x,t}^2}}^2}}^{1/\LRp{\gamma-1}} \\
    1 - \frac{\beta}{2\pi} e^{1-r\LRp{x,t}^2} \LRp{y-y_0} \\
    \frac{\beta}{2\pi} e^{1-r\LRp{x,t}^2} \LRp{y-y_0} \\
    \rho\LRp{x,t}^\gamma
    \end{bmatrix},\\
    r\LRp{x,t} = \sqrt{\LRp{x-x_0-t}^2 + \LRp{y-y_0}^2},
\end{align*}
where $\LRp{x_0,y_0} = \LRp{9.0,5.0}$ denotes the center of the vortex at time $t = 0$, and $\beta = 8.5$ denotes the strength of the vortex. Most numerical experiments in the literature choose the strength of the vortex $\beta = 5$, and the minimum density is $0.36$ \cite{chan2018discretely}. For $\beta = 8.5$, the maximum and minimum density in this test case are $1.0$ and $2.145 \times 10^{-3}$, respectively, and the unlimited entropy stable scheme fails due to large jumps in the density and pressure. 

\rzero{
We first test the convergence of the solutions on quadrilateral elements. We construct a uniform uniform quadrilateral mesh by discretizing the $x,y$ directions with $2K_{\text{1D}},K_{\text{1D}}$ uniform intervals. We enforce periodic boundary conditions on the domain and run the simulations until $T = 2.0$ with $\text{CFL} = 0.9$. Table \ref{tab:vortexquad1} and \ref{tab:vortexquad5} show that the elementwise limited solutions with $\zeta = 0.1$ and $\zeta= 0.5$ both have asymptotic convergence rates between $O(h^{N+1/2})$ and $O(h^{N+1})$. 
}

\begin{table}[!htb]
\centering
\rzero{
\begin{tabular}{|c|c|c |c|c|c|c|c|c|} 
 \hline
 & \multicolumn{2}{|c|}{$N=1$} & \multicolumn{2}{|c|}{$N=2$} & \multicolumn{2}{|c|}{$N=3$} & \multicolumn{2}{|c|}{$N=4$}\\
 \hline
 K & $L^2$ error & Rate & $L^2$ error & Rate & $L^2$ error & Rate & $L^2$ error & Rate \\  
 \hline
 2  & $2.134\times 10^{0}$  &      & $1.172\times 10^{0}$  &      & $1.511\times 10^{0}$  &      & $8.365\times 10^{-1}$ & \\ 
 4  & $1.410\times 10^{0}$  & 0.60 & $1.165\times 10^{0}$  & 0.01 & $5.718\times 10^{-1}$ & 1.40 & $3.359\times 10^{-1}$ & 1.32\\
 8  & $1.162\times 10^{0}$  & 0.28 & $4.603\times 10^{-1}$ & 1.34 & $1.609\times 10^{-1}$ & 1.83 & $9.425\times 10^{-2}$ & 1.83\\
 16 & $6.710\times 10^{-1}$ & 0.79 & $1.050\times 10^{-1}$ & 2.13 & $2.310\times 10^{-2}$ & 2.80 & $7.087\times 10^{-3}$ & 3.73\\
 32 & $3.004\times 10^{-1}$ & 1.16 & $1.727\times 10^{-2}$ & 2.61 & $2.477\times 10^{-3}$ & 3.22 & $1.915\times 10^{-4}$ & 5.21\\ 
 \hline
\end{tabular}
}
\caption{Isentropic vortex, quadrilateral mesh - Elementwise (Zhang-Shu type) limiting with $\zeta = 0.1$}
\label{tab:vortexquad1}
\end{table}

\begin{table}[!htb]
\centering
\rzero{
\begin{tabular}{|c|c|c |c|c|c|c|c|c|} 
 \hline
 & \multicolumn{2}{|c|}{$N=1$} & \multicolumn{2}{|c|}{$N=2$} & \multicolumn{2}{|c|}{$N=3$} & \multicolumn{2}{|c|}{$N=4$}\\
 \hline
 K & $L^2$ error & Rate & $L^2$ error & Rate & $L^2$ error & Rate & $L^2$ error & Rate \\  
 \hline
 2  & $2.134\times 10^{0}$  &      & $1.171\times 10^{0}$  &      & $1.443\times 10^{0}$  &      & $8.163\times 10^{-1}$ & \\ 
 4  & $1.310\times 10^{0}$  & 0.60 & $1.148\times 10^{0}$  & 0.03 & $5.958\times 10^{-1}$ & 1.28 & $4.073\times 10^{-1}$ & 1.00\\
 8  & $1.160\times 10^{0}$  & 0.28 & $4.865\times 10^{-1}$ & 1.24 & $1.905\times 10^{-1}$ & 1.64 & $8.987\times 10^{-2}$ & 2.18\\
 16 & $6.712\times 10^{-1}$ & 0.79 & $1.223\times 10^{-1}$ & 1.99 & $2.308\times 10^{-2}$ & 3.05 & $1.511\times 10^{-2}$ & 2.57\\
 32 & $3.009\times 10^{-1}$ & 1.16 & $1.706\times 10^{-2}$ & 2.84 & $2.393\times 10^{-3}$ & 3.27 & $1.915\times 10^{-4}$ & 6.30\\ 
 \hline
\end{tabular}
\caption{Isentropic vortex, quadrilateral mesh - Elementwise (Zhang-Shu type) limiting with $\zeta = 0.5$}
\label{tab:vortexquad5}
}
\end{table}

\rzero{
We next test convergence on simplicial elements. We construct each simplicial mesh by subdividing each element in a uniform quadrilateral mesh into two uniform triangles. We enforce periodic boundary conditions on the domain and run the simulations until $T = 2.0$ with $\text{CFL} = 0.5$. Table \ref{tab:vortextri1} and \ref{tab:vortextri5} show the optimal convergence rate between $O\LRp{h^{N+1/2}}$ to $O\LRp{h^{N+1}}$ on the simplicial mesh. 
}

\begin{table}[!htb]
\centering
\rzero{
\begin{tabular}{|c|c|c |c|c|c|c|c|c|} 
 \hline
 & \multicolumn{2}{|c|}{$N=1$} & \multicolumn{2}{|c|}{$N=2$} & \multicolumn{2}{|c|}{$N=3$} & \multicolumn{2}{|c|}{$N=4$}\\
 \hline
 K & $L^2$ error & Rate & $L^2$ error & Rate & $L^2$ error & Rate & $L^2$ error & Rate \\  
 \hline
 2  & $2.301\times 10^{0}$  &      & $1.004\times 10^{0}$  &      & $1.206\times 10^{0}$  &      & $7.700\times 10^{-1}$ & \\ 
 4  & $1.061\times 10^{0}$  & 1.12 & $7.873\times 10^{-1}$ & 0.35 & $4.773\times 10^{-1}$ & 1.33 & $3.998\times 10^{-1}$ & 0.95\\
 8  & $8.050\times 10^{-1}$ & 0.40 & $3.650\times 10^{-1}$ & 1.11 & $1.683\times 10^{-1}$ & 1.50 & $8.982\times 10^{-2}$ & 2.15\\
 16 & $4.445\times 10^{-1}$ & 0.86 & $8.736\times 10^{-2}$ & 2.06 & $2.842\times 10^{-2}$ & 2.57 & $9.365\times 10^{-3}$ & 3.26\\
 32 & $1.593\times 10^{-1}$ & 1.48 & $1.322\times 10^{-2}$ & 2.72 & $2.794\times 10^{-3}$ & 3.35 & $3.248\times 10^{-4}$ & 4.85\\ 
 \hline
\end{tabular}
\caption{Isentropic vortex, simplicial mesh - Elementwise (Zhang-Shu type) limiting with $\zeta = 0.1$}
\label{tab:vortextri1}
}
\end{table}

\begin{table}[!htb]
\centering
\rzero{
\begin{tabular}{|c|c|c |c|c|c|c|c|c|} 
 \hline
 & \multicolumn{2}{|c|}{$N=1$} & \multicolumn{2}{|c|}{$N=2$} & \multicolumn{2}{|c|}{$N=3$} & \multicolumn{2}{|c|}{$N=4$}\\
 \hline
 K & $L^2$ error & Rate & $L^2$ error & Rate & $L^2$ error & Rate & $L^2$ error & Rate \\  
 \hline
 2  & $2.297\times 10^{0}$  &      & $9.937\times 10^{-1}$ &      & $1.204\times 10^{0}$  &      & $7.391\times 10^{-1}$ & \\ 
 4  & $1.049\times 10^{0}$  & 1.13 & $7.887\times 10^{-1}$ & 0.33 & $5.034\times 10^{-1}$ & 1.26 & $4.059\times 10^{-1}$ & 0.86\\
 8  & $8.036\times 10^{-1}$ & 0.39 & $3.834\times 10^{-1}$ & 1.04 & $1.881\times 10^{-1}$ & 1.42 & $9.890\times 10^{-2}$ & 2.04\\
 16 & $4.434\times 10^{-1}$ & 0.86 & $8.993\times 10^{-2}$ & 2.09 & $2.944\times 10^{-2}$ & 2.68 & $1.578\times 10^{-2}$ & 2.65\\
 32 & $1.594\times 10^{-1}$ & 1.48 & $1.298\times 10^{-2}$ & 2.79 & $2.606\times 10^{-3}$ & 3.50 & $4.258\times 10^{-4}$ & 5.21\\ 
 \hline
\end{tabular}
\caption{Isentropic vortex, simplicial mesh - Elementwise (Zhang-Shu type) limiting with $\zeta = 0.5$}
\label{tab:vortextri5}
}
\end{table}

\subsection{Sedov blast wave}

We next run the Sedov blast wave problem \cite{fryxell2000flash} to test the proposed limiting strategy for the compressible Euler equations on both quadrilateral and simplicial meshes. The problem involves a large region of near-zero density and pressure, and is often used to evaluate the behaviour of numerical methods in near-vacuum regions. The domain is $\LRs{-1.5, 1.5}^2$, and the initial condition in primitive variables is \cite{fryxell2000flash}
\begin{align}
    \vec{u}\LRp{x,y} = \begin{cases}
        \vec{u}_{\rm int}, \quad &r < r_{0}\\
        \vec{u}_{\rm amb}, \quad &\text{otherwise}
    \end{cases}, \quad \vec{u}_I = \begin{bmatrix}
    \rho_{\rm int} \\
    u_{\rm int} \\
    v_{\rm int} \\
    p_{\rm int} \\
    \end{bmatrix} = \begin{bmatrix}
    1.0 \\
    0.0 \\
    0.0 \\
    \frac{\LRp{\gamma-1}E_0}{\pi r_{0}^2} \\
    \end{bmatrix},\quad \vec{u}_0 = \begin{bmatrix}
    \rho_{\rm amb} \\
    u_{\rm amb} \\
    v_{\rm amb} \\
    p_{\rm amb} \\
    \end{bmatrix} = \begin{bmatrix}
    1.0 \\
    0.0 \\
    0.0 \\
    10^{-5} \\
    \end{bmatrix}, \label{eq:sedov}
\end{align}
where we define $r = \sqrt{x^2 + y^2}$ and set $\gamma = 1.4, E_0 = 1.0, r_0 = 4h$, where $h$ is the mesh size. We discretize the domain with uniform quadrilateral and simplicial meshes as described in \ref{sec:vortex}, and we define the mesh size in both cases by $h = \frac{3}{K_{\rm 1D}}$. Periodic boundary conditions are enforced. 

We approximate the solution until final time $T=1$ with degree $N = 3$ polynomials. We use three different limiting configurations: 
\begin{enumerate}
\item $\zeta = 0.1$ without shock capturing,
\item $\zeta = 0.1$ with shock capturing, and
\item $\zeta = 0.5$ without shock capturing.
\end{enumerate}
We plot the density and its $10$ logarithmically spaced contours. We truncate the color range to $\LRs{0.01, 6}$ for clearer visualization. Figures \ref{fig:sedovquad} and \ref{fig:sedovtri} shows the results on quadrilateral and triangular meshes, respectively. The simulations remain robust in all configurations. Without shock capturing, when $\zeta = 0.1$, the simulations manifest spurious oscillations on both types of meshes. On the other hand, when shock capturing is activated, spurious oscillations are reduced on both meshes. Increasing the relaxation factor $\zeta$ to $0.5$ also suppresses the oscillations to some extent.

\begin{figure}[!htb]
\centering
\begin{subfigure}{.35\textwidth}
  \centering
  \includegraphics[width=.7\linewidth]{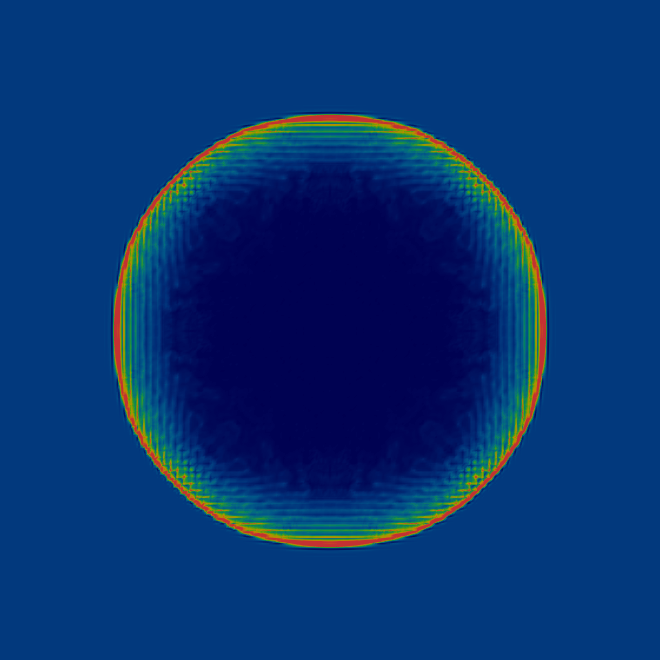}
\end{subfigure}%
\hspace{-1.5cm}
\begin{subfigure}{.35\textwidth}
  \centering
  \includegraphics[width=.7\linewidth]{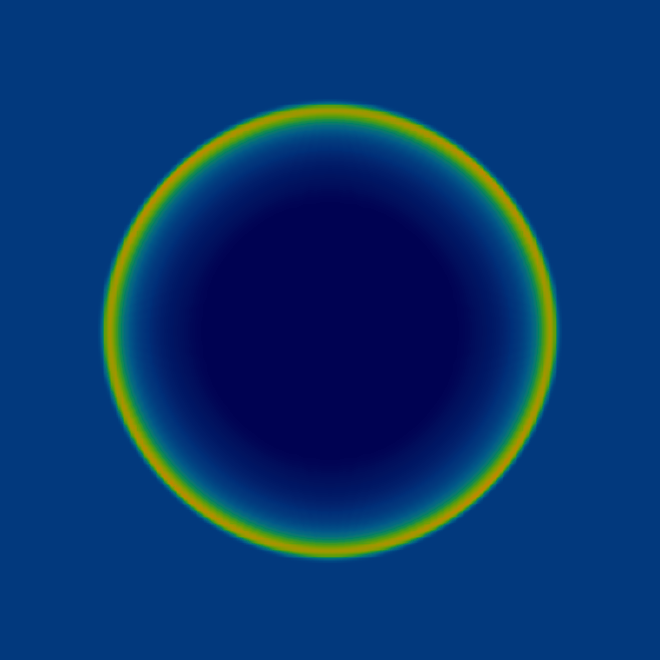}
\end{subfigure}
\hspace{-1.5cm}
\begin{subfigure}{.35\textwidth}
  \centering
  \includegraphics[width=.7\linewidth]{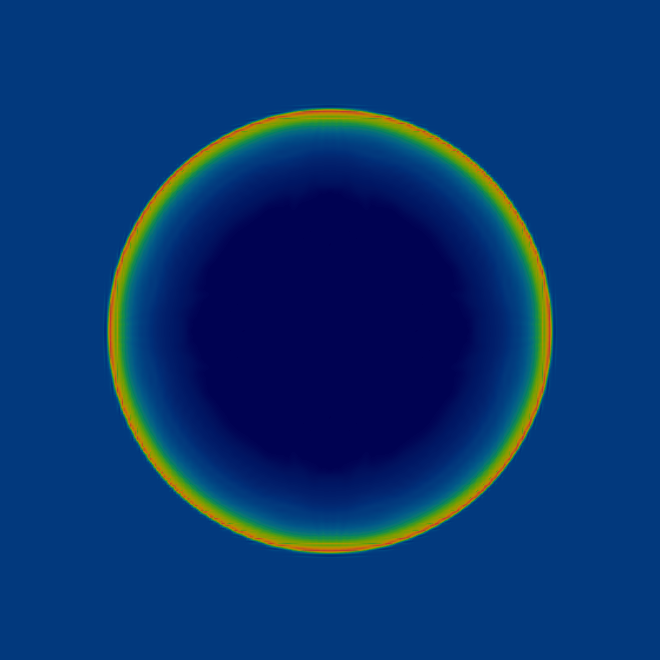}
\end{subfigure}
\newline
\begin{subfigure}{.35\textwidth}
  \centering
  \hspace{-0.7cm}
  \includegraphics[width=.7\linewidth]{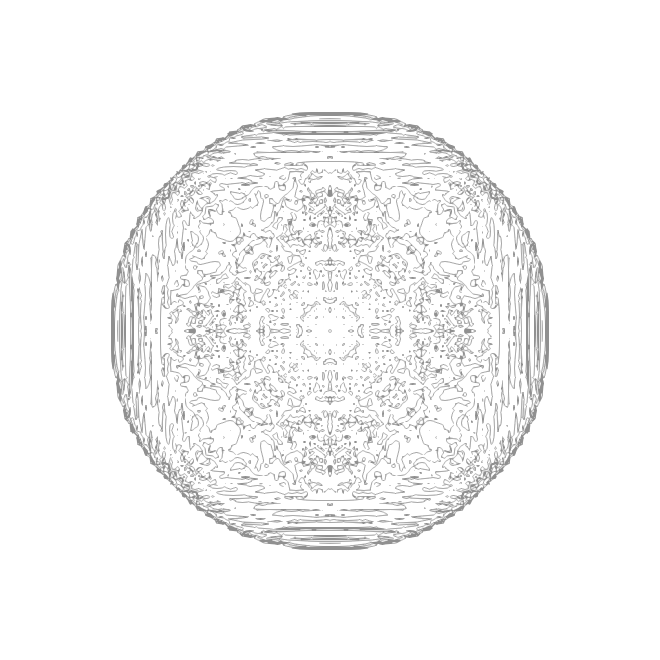}
  \caption{$\zeta = 0.1$, without shock capturing}
\end{subfigure}%
\hspace{-1.5cm}
\begin{subfigure}{.35\textwidth}
  \centering
  \hspace{-0.7cm}
  \includegraphics[width=.7\linewidth]{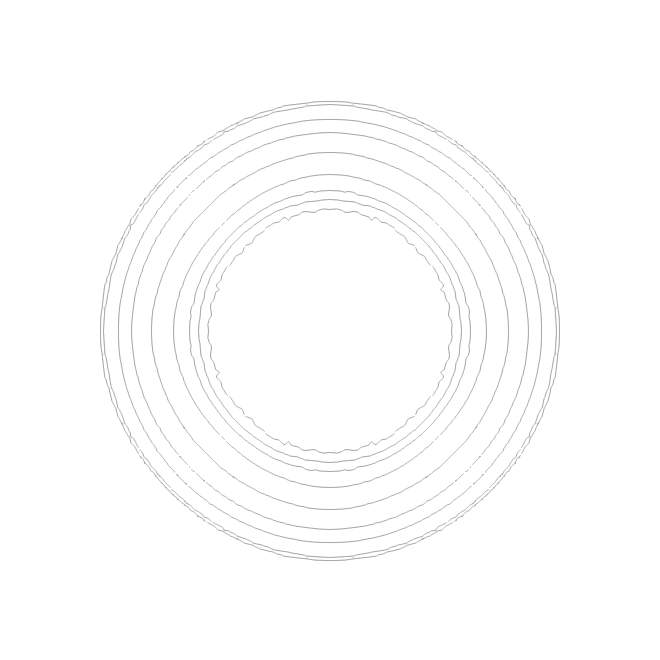}
  \caption{$\zeta = 0.1$, with shock capturing}
\end{subfigure}
\hspace{-1.5cm}
\begin{subfigure}{.35\textwidth}
  \centering
  \hspace{-0.7cm}
  \includegraphics[width=.7\linewidth]{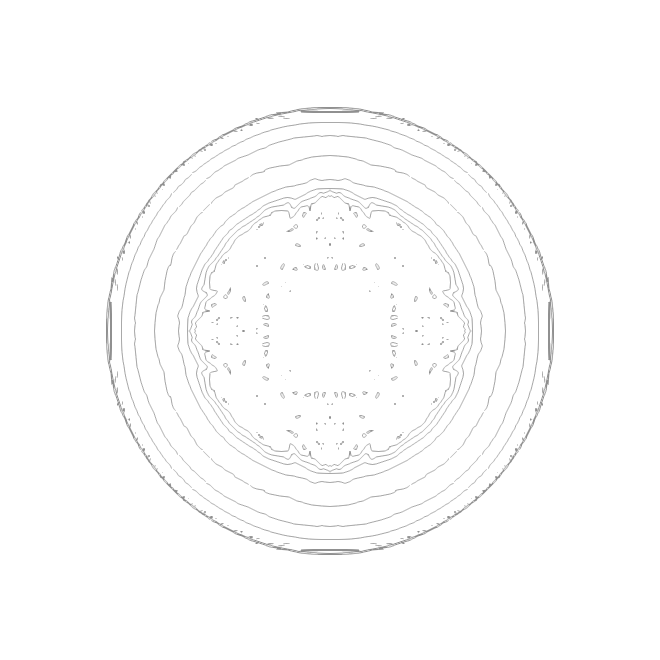}
  \caption{$\zeta = 0.5$, without shock capturing}
\end{subfigure}
\caption{Sedov blast wave, quadrilateral mesh, $N = 3, K_{\rm 1D} = 100$}
\label{fig:sedovquad}
\end{figure}

\begin{figure}[!htb]
\centering
\begin{subfigure}{.35\textwidth}
  \centering
  \includegraphics[width=.7\linewidth]{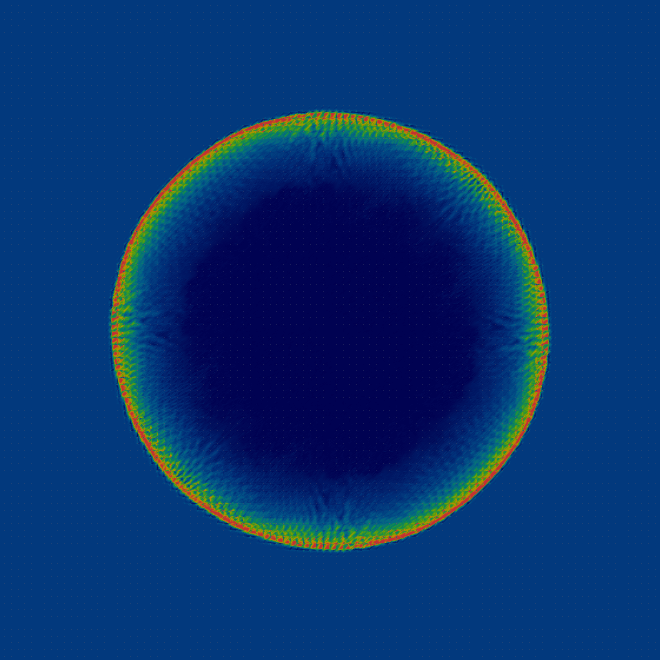}
\end{subfigure}%
\hspace{-1.5cm}
\begin{subfigure}{.35\textwidth}
  \centering
  \includegraphics[width=.7\linewidth]{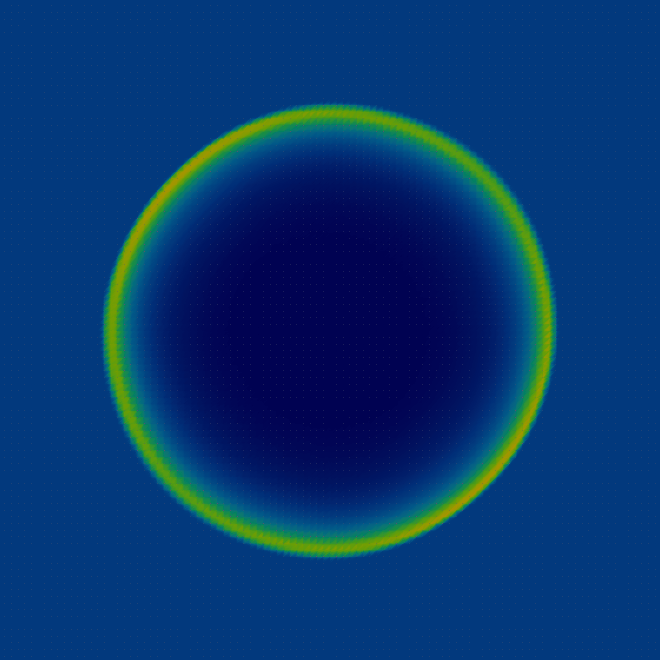}
\end{subfigure}
\hspace{-1.5cm}
\begin{subfigure}{.35\textwidth}
  \centering
  \includegraphics[width=.7\linewidth]{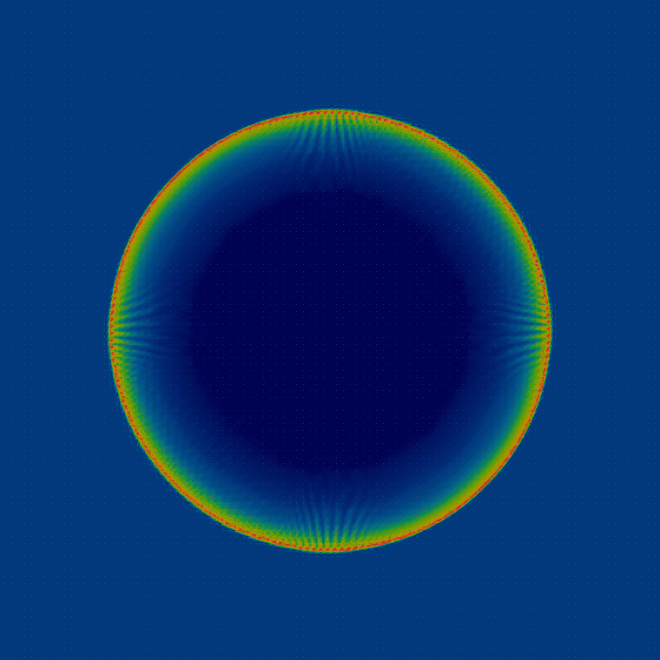}
\end{subfigure}
\newline
\begin{subfigure}{.35\textwidth}
  \centering
  \hspace{-0.7cm}
  \includegraphics[width=.7\linewidth]{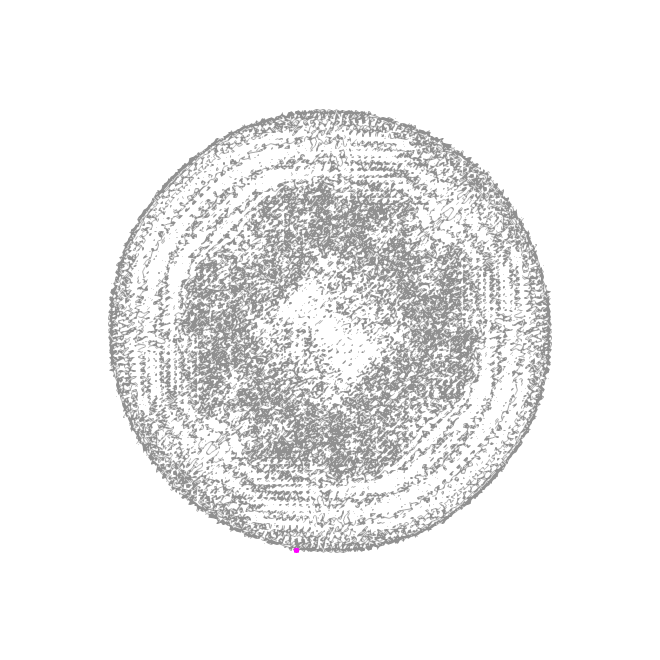}
  \caption{$\zeta = 0.1$, without shock capturing}
\end{subfigure}%
\hspace{-1.5cm}
\begin{subfigure}{.35\textwidth}
  \centering
  \hspace{-0.7cm}
  \includegraphics[width=.7\linewidth]{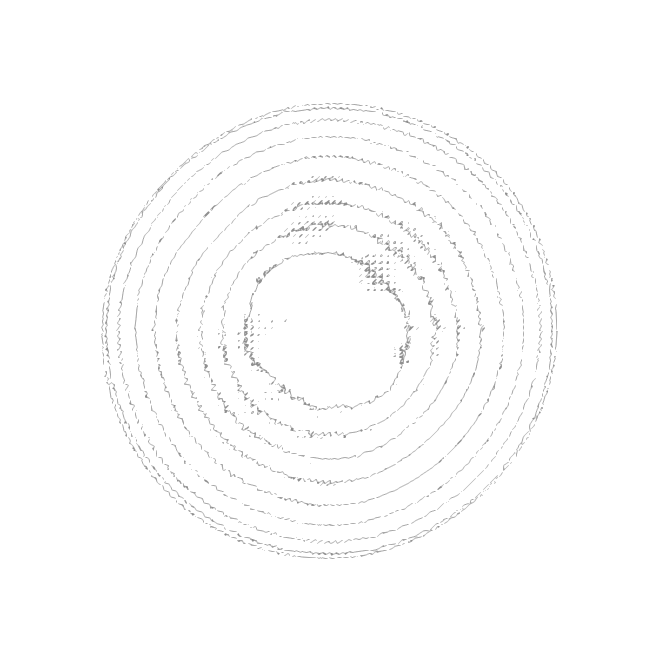}
  \caption{$\zeta = 0.1$, with shock capturing}
\end{subfigure}
\hspace{-1.5cm}
\begin{subfigure}{.35\textwidth}
  \centering
  \hspace{-0.7cm}
  \includegraphics[width=.7\linewidth]{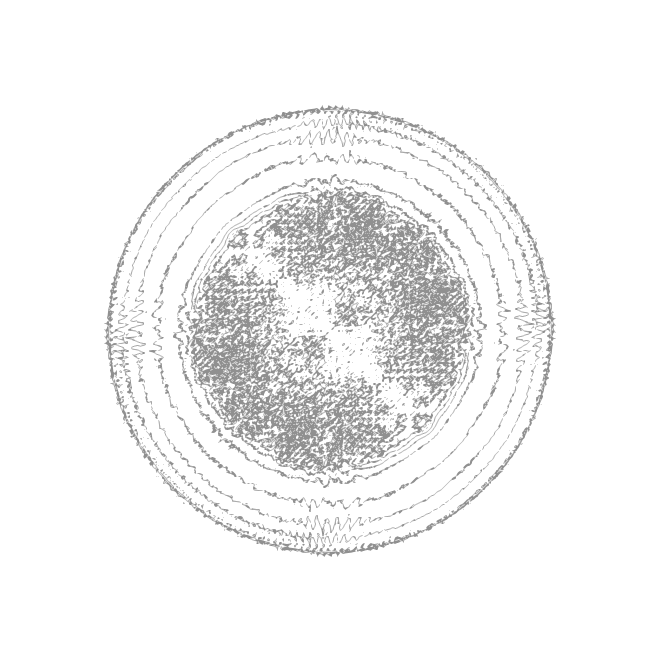}
  \caption{$\zeta = 0.5$, without shock capturing}
\end{subfigure}
\caption{Sedov blast wave, simplicial mesh, $N = 3, K_{\rm 1D} = 100$}
\label{fig:sedovtri}
\end{figure}

}

\subsection{Double Mach reflection}
\label{sec:dmr}

Next, we run the Double Mach Reflection \cite{woodward1984numerical} for the compressible Euler equations. The domain is $[0,3.5]\times[0,1]$, and the initial condition in primitive variables is
\begin{align}
    \vec{u}\LRp{x,y} = \begin{cases}
        \vec{u}_L, \quad &\xi > 0\\
        \vec{u}_R, \quad &\text{otherwise}
    \end{cases}, \quad \vec{u}_L = \begin{bmatrix}
    \rho_L \\
    u_L \\
    v_L \\
    p_L \\
    \end{bmatrix} = \begin{bmatrix}
    8.0 \\
    8.25\cos\LRp{\pi/6} \\
    -8.25\sin\LRp{\pi/6} \\
    116.5 \\
    \end{bmatrix},\quad \vec{u}_R = \begin{bmatrix}
    \rho_R \\
    u_R \\
    v_R \\
    p_R \\
    \end{bmatrix} = \begin{bmatrix}
    1.4 \\
    0.0 \\
    0.0 \\
    1.0 \\
    \end{bmatrix}, \label{eq:dmr}
\end{align}
where $\xi = y - \sqrt{3}x+\frac{\sqrt{3}}{6}$. We enforce reflective wall boundary conditions on $[\frac{1}{6},3.5]\times 0$ and assign $\vec{u}_L,\vec{u}_R$ as exterior values,
\begin{align}
    \vec{u}^+ \LRp{x,y} = \begin{cases}
    \vec{u}_L, \quad \text{if } x \in [0,\frac{1}{6}]\times \LRc{0} \ \bigcup \  \LRc{0}\times [0,1] \ \bigcup \  [0,s\LRp{t}]\times \LRc{1} \\
    \vec{u}_R, \quad \text{if } x\in [s(t),3.5]\times \LRc{1} \ \bigcup \  \LRc{3.5}\times [0,1]
    \end{cases}, \quad s\LRp{t} = \frac{1+\sqrt{3}/6}{\sqrt{3}} + \frac{10}{\cos\LRp{\pi/6}}t. \label{eq:dmrbc}
\end{align}

We discretize the domain by a uniform quadrilateral mesh with $875\times 250$ elements, with polynomial degree $N=3$. We run the simulation until $T = 0.2$ with \rzero{$\text{Re} = \infty$}. We truncate the color range to $[1,24]$ for clearer visualization, and we use $20$ contours linearly spaced between the interval $[1,24]$. 




\rzero{
Figure \ref{fig:dmr0.1woshock} shows results using Elementwise (Zhang-Shu type) limiting using the generalized positivity bound with $\zeta =0.1$. In the presence of strong shocks, the simulations remain robust. In addition, the simulations resolve fine scale vorticular behavior, which suggests that the proposed limiting strategy does not introduce excessive numerical dissipation. However, we observe numerical artifacts near shocks which perturb the vortices when $\zeta = 0.1$. The numerical oscillations can be either suppressed with additional shock capturing as Figure \ref{fig:dmr0.1wshock} shown, or increase the relaxation factor $\zeta = 0.5$ as Figure \ref{fig:dmr0.5woshock} shown. We observe in both cases the spurious oscillations near the shock are suppressed, and fine scale features are still well resolved with extra dissipation introduced. 
}

\begin{figure}[H]
  \centering
  \includegraphics[width=.8\linewidth]{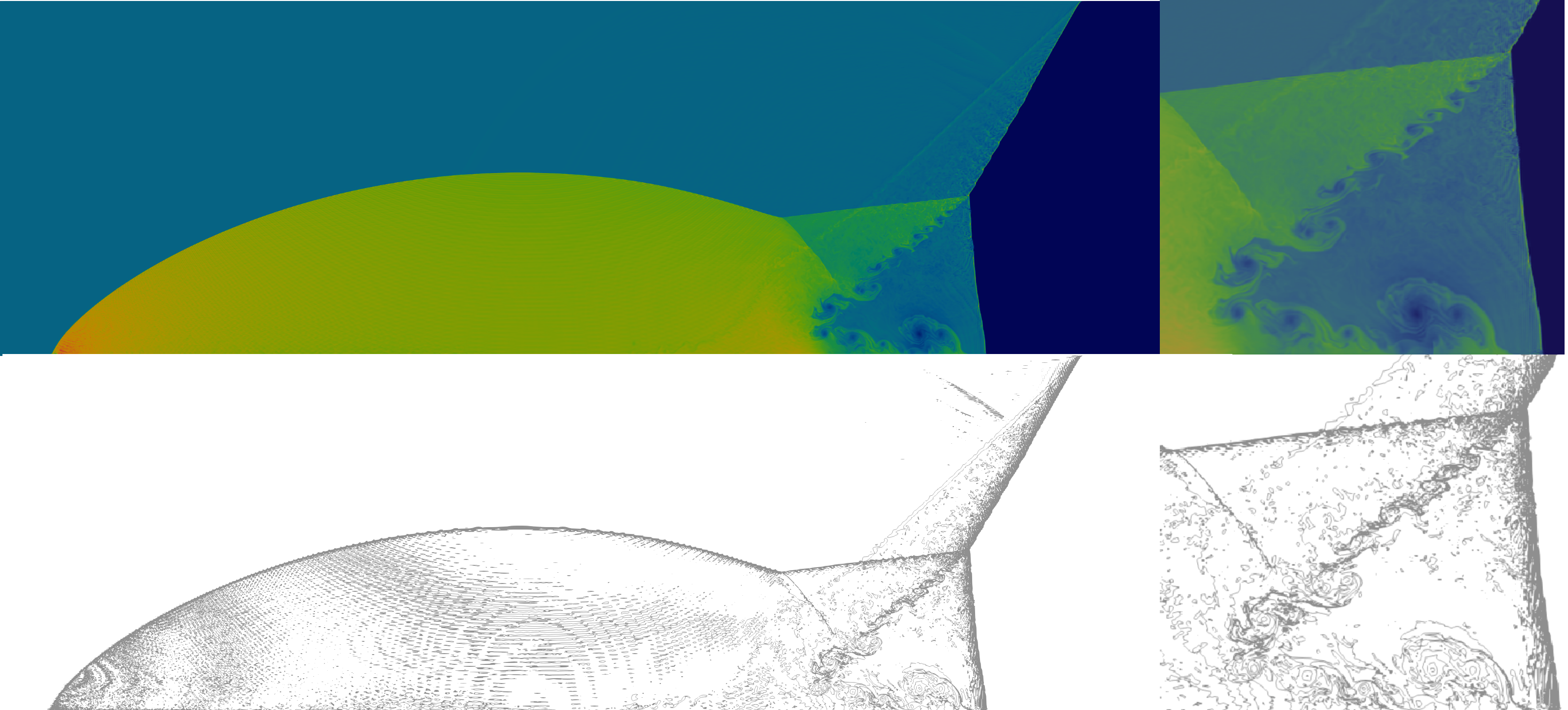}
  \caption{Double Mach Reflection, elementwise (Zhang-Shu type) limiting with $\zeta = 0.1$ (without shock capturing)}
  \label{fig:dmr0.1woshock}
\end{figure}

\begin{figure}[H]
  \centering
  \includegraphics[width=.8\linewidth]{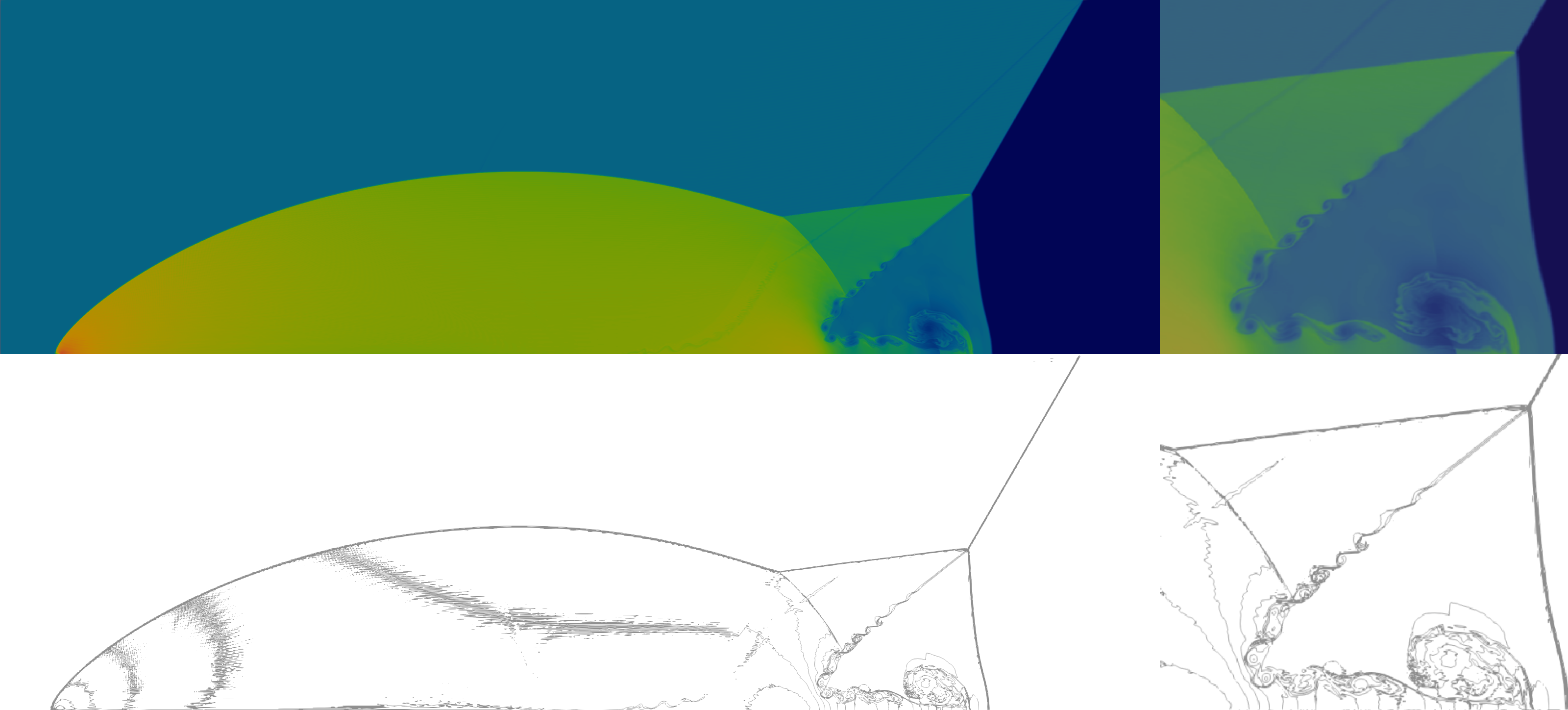}
  \caption{Double Mach Reflection, elementwise (Zhang-Shu type) limiting with $\zeta = 0.1$ (with shock capturing)}
  \label{fig:dmr0.1wshock}
\end{figure}

\begin{figure}[H]
  \centering
  \includegraphics[width=.8\linewidth]{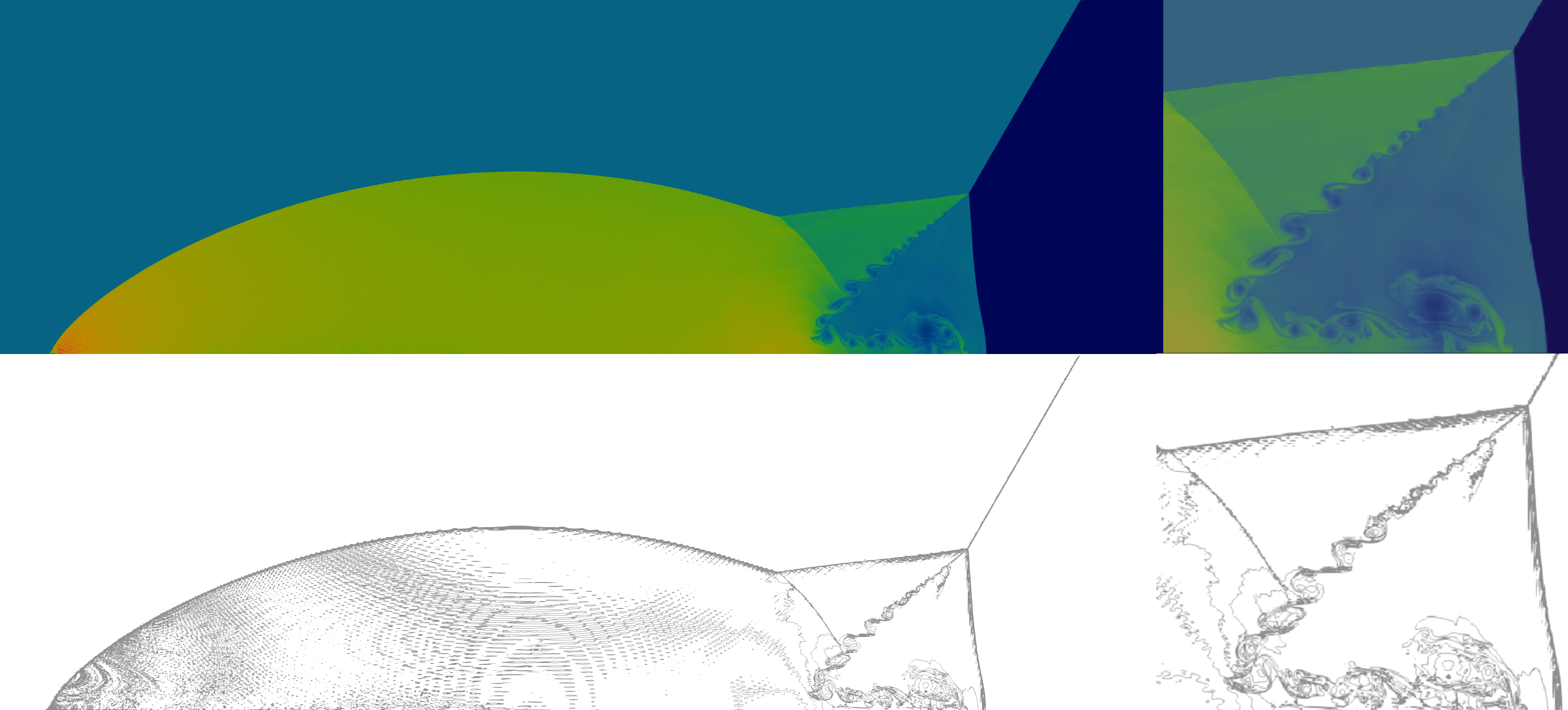}
  \caption{Double Mach Reflection, elementwise (Zhang-Shu type) limiting with $\zeta = 0.5$ (without shock capturing)}
  \label{fig:dmr0.5woshock}
\end{figure}

\rzero{
\subsection{Daru-Tenaud shocktube}

We conclude by examining the 2D shocktube problem proposed by Daru and Tenaud \cite{daru2000evaluation, daru2009numerical} for the compressible Navier-Stokes equation. This test case involves complex interactions between viscous shocks, contact waves, and viscous boundary layers. The domain is $[0,1]\times[0,0.5]$, and the initial condition is
\begin{align}
    \vec{u}\LRp{x,y} = \begin{cases}
        \vec{u}_L, \quad &x > 0.5\\
        \vec{u}_R, \quad &\text{otherwise}
    \end{cases}, \quad \vec{u}_L = \begin{bmatrix}
    \rho_L \\
    u_L \\
    v_L \\
    p_L \\
    \end{bmatrix} = \begin{bmatrix}
    120.0 \\
    0.0 \\
    0.0 \\
    \frac{120.0}{\gamma} \\
    \end{bmatrix},\quad \vec{u}_R = \begin{bmatrix}
    \rho_R \\
    u_R \\
    v_R \\
    p_R \\
    \end{bmatrix} = \begin{bmatrix}
    1.2 \\
    0.0 \\
    0.0 \\
    \frac{1.2}{\gamma} \\
    \end{bmatrix}, \label{eq:darushocktubeinit}
\end{align}
where $\gamma = 1.4, Pr = 0.73$. We enforce reflective wall boundary conditions on $[0,1]\times 0.5$ and adiabatic no-slip wall boundary condition on $[0,1] \times 0 \ \bigcup\  0 \times [0,0.5] \ \bigcup\  1 \times [0, 0.5]$ using previous works by Chan \cite{chan2022entropy} and Hindenlang et al. \cite{hindenlang2020stability}.

We discretize the domain using an uniform quadrilateral mesh with $2 K_{\rm 1D} \times K_{\rm 1D}$ elements. We run the simulation until $T = 1.0$ and $CFL = 0.5$. The solution is visualized using a numerical Schlieren plot, which visualizes gradients of the density field by plotting the quantity $\rho^{\rm schl}$\cite{guermond2018second}:
\begin{align}
    \rho^{\rm schl} = \exp\LRp{-10\frac{g - g_{\min}}{g_{\max}-g_{\min}}},\qquad g = \nor{\nabla\rho},\qquad g_{\min} = \min\limits_{x \in \Omega} g\LRp{x}, \qquad g_{\max} = \max\limits_{x\in\Omega} g\LRp{x}\label{eq:schlieren}
\end{align}

Figure \ref{fig:daru} shows the solution when $Re = 1000$ at $T = 1$ for various mesh resolutions and polynomial degrees. In this convergence study, we consider the generalized positivity bound with $\zeta = 0.1$ without introducing any extra dissipation or shock capturing. All simulations remained robust and did not crash, and we observe the convergence of the general flow structures. 

Three configurations are worth analyzing in more detail: $\LRp{N = 1, K_{\rm 1D} = 300}$, $\LRp{N = 2, K_{\rm 1D} = 200}$, and $\LRp{N = 1, K_{\rm 1D} = 150}$. All three configurations have the same number of degrees of freedom ($720000$). We notice that for higher degree polynomials $N = 2, 3$, there are noticeably more oscillations near the shock compared with an $N=1$ approximation. These oscillations appear to stem from the shock-shock interaction. 

Higher order schemes ($N=2, 3$) also produce qualitatively different flow structures near the bottom wall boundary compared to the second order approximation  with $N = 1$. The higher order solutions are qualitatively more similar to the fine-grid reference solution in \cite{guermond2022implementation}. However, we note that the limited scheme does not appear to converge uniformly towards this reference solution. For example, while the $N=3$ solutions for grid resolutions $K_{\rm 1D} = 100, 300$ both resemble the reference solution in \cite{guermond2022implementation}, grid resolution $K_{\rm 1D} = 200$ displays qualitatively different flow features near the $x = [0.45, 0.65]$ bottom wall boundary. 

\begin{landscape}
\begin{figure}
  \centering
  \raisebox{60pt}{\parbox[b]{.1\textwidth}{$N = 1$}}%
  \hspace{-0.8cm}\subfloat[][]{\includegraphics[width=.3\textwidth]{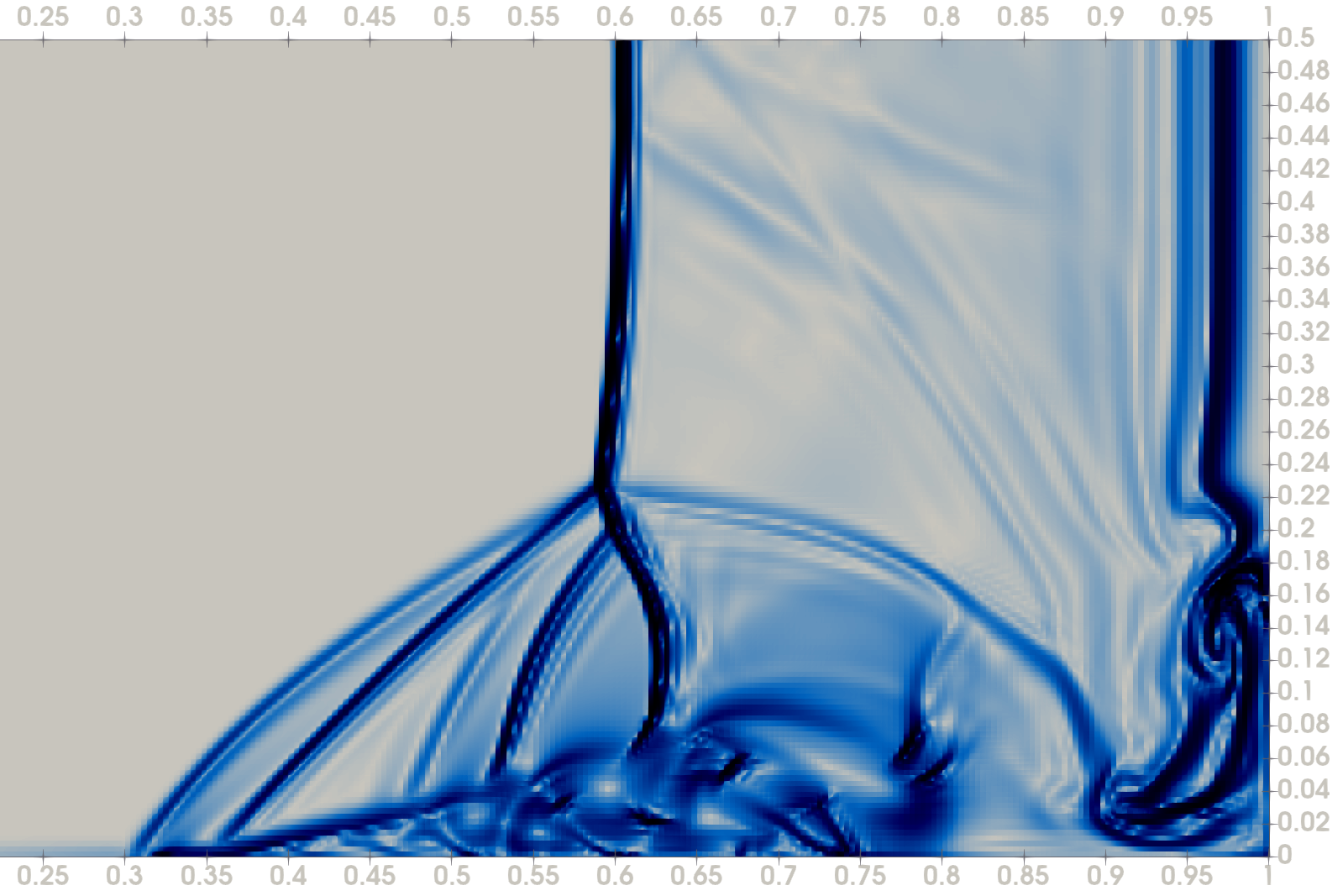}}\hfill
  \subfloat[][]{\includegraphics[width=.3\textwidth]{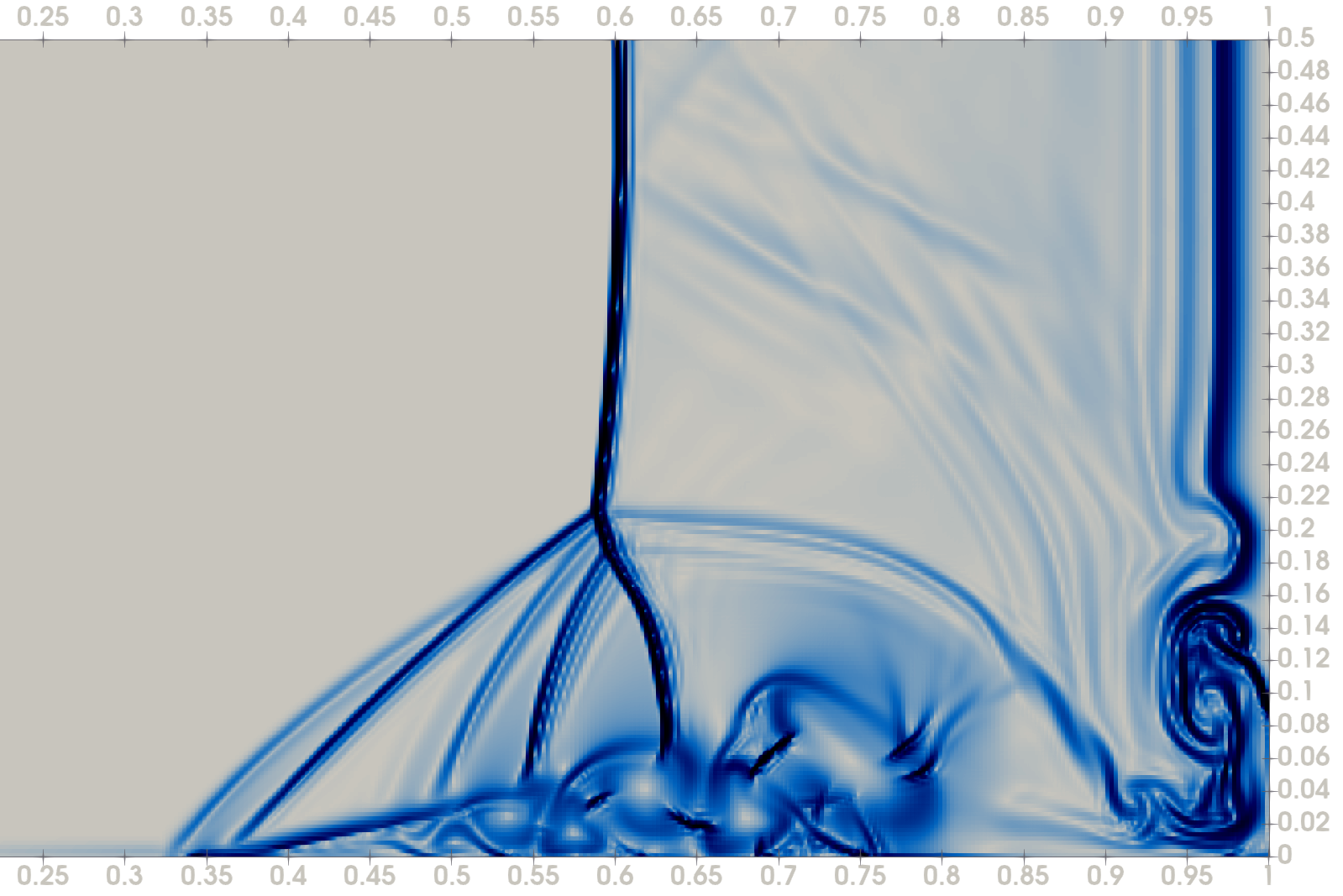}}\hfill
  \subfloat[][]{\includegraphics[width=.3\textwidth]{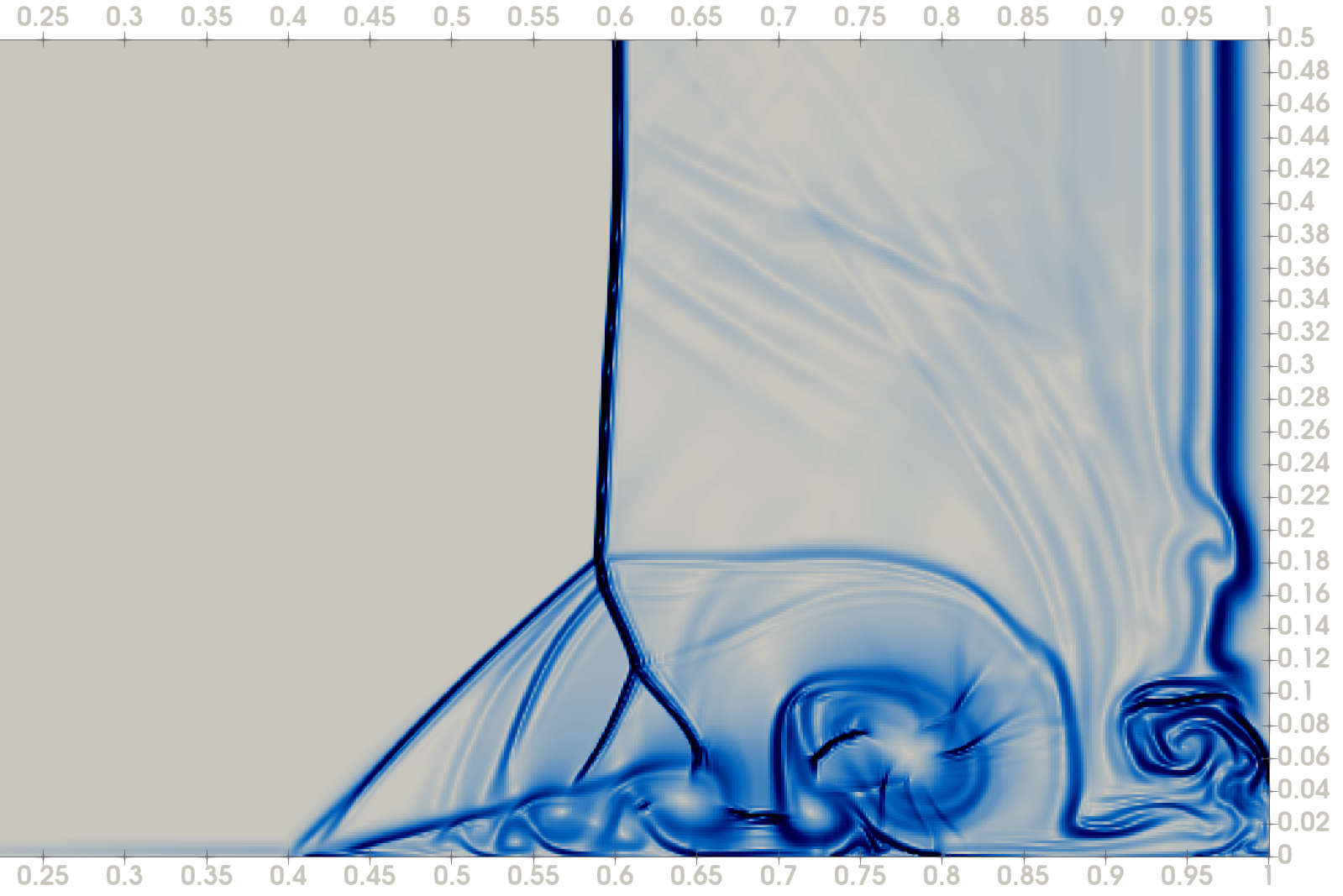}}\par
  \raisebox{60pt}{\parbox[b]{.1\textwidth}{$N = 2$}}%
  \hspace{-0.8cm}\subfloat[][]{\includegraphics[width=.3\textwidth]{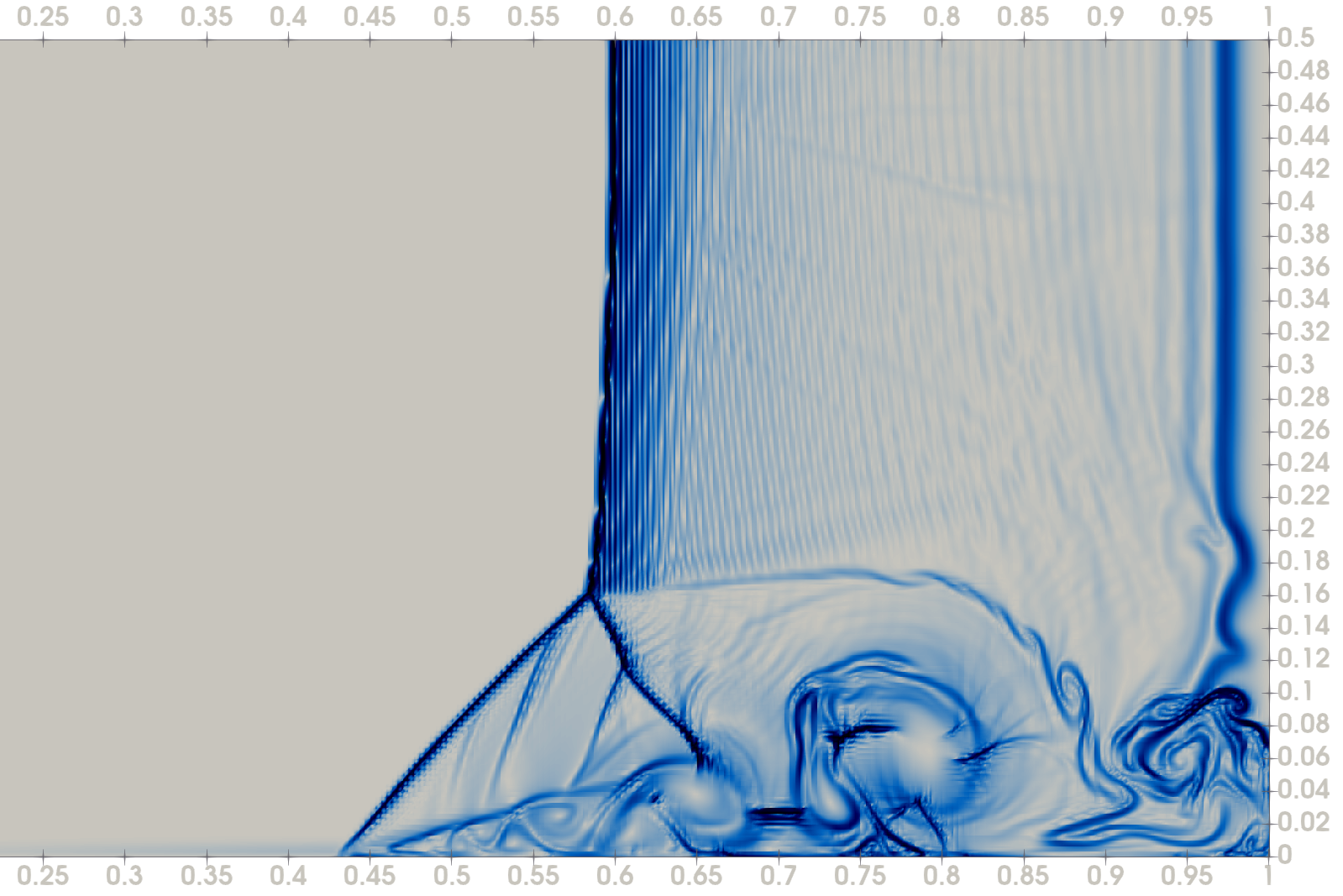}}\hfill
  \subfloat[][]{\includegraphics[width=.3\textwidth]{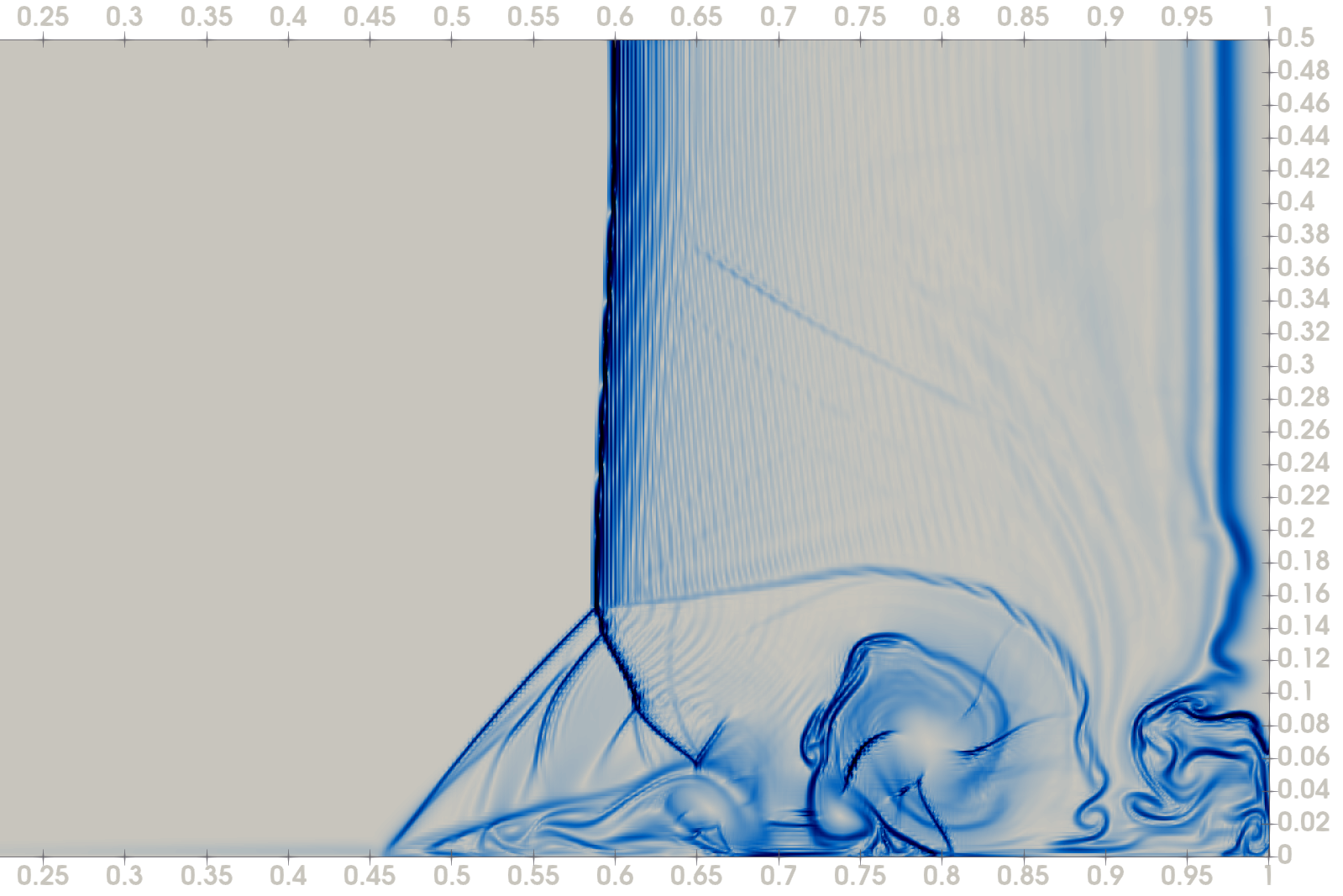}}\hfill
  \subfloat[][]{\includegraphics[width=.3\textwidth]{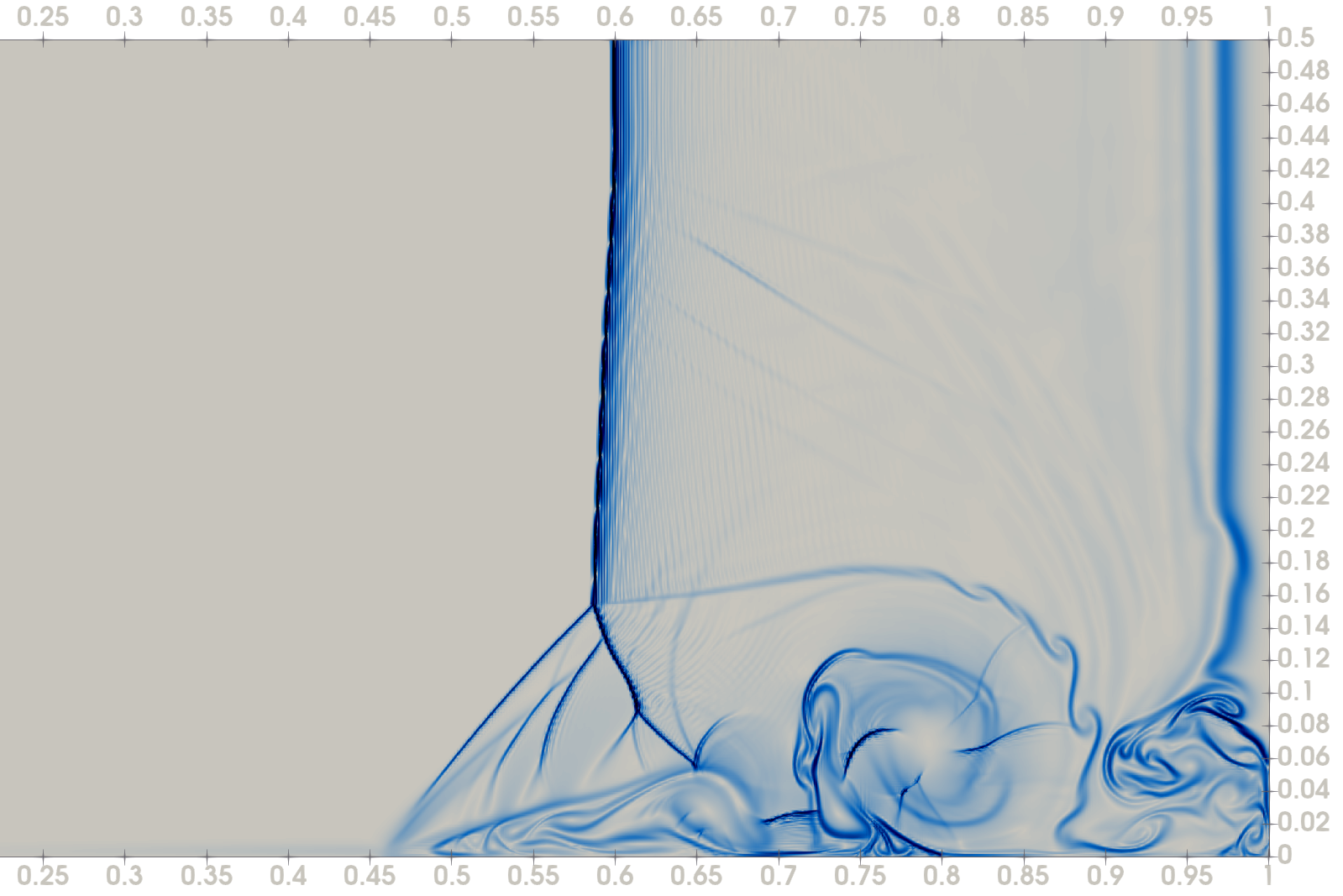}}\par
  \raisebox{60pt}{\parbox[b]{.1\textwidth}{$N = 3$}}%
  \hspace{-0.8cm}\subfloat[][]{\includegraphics[width=.3\textwidth]{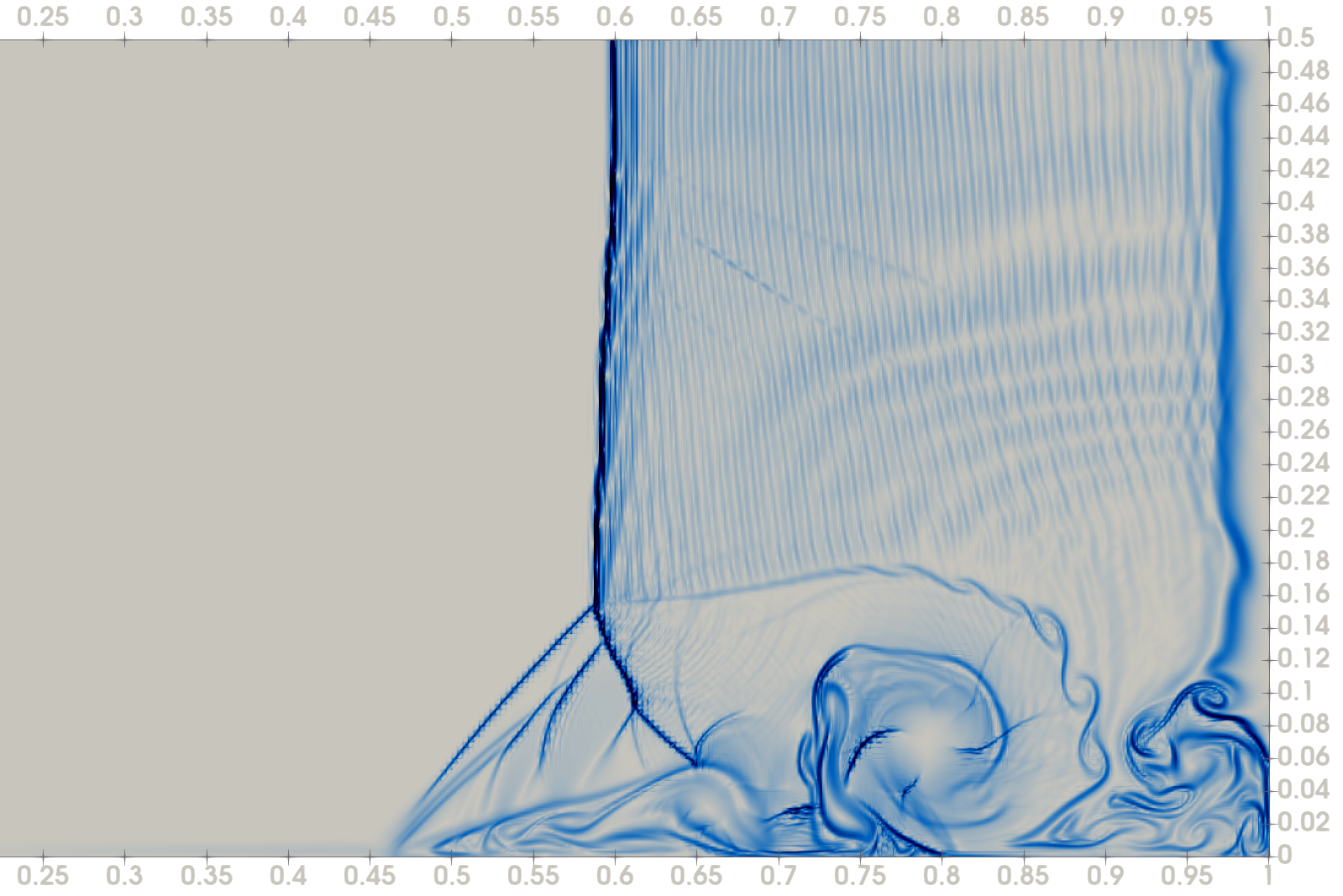}}\hfill
  \subfloat[][]{\includegraphics[width=.3\textwidth]{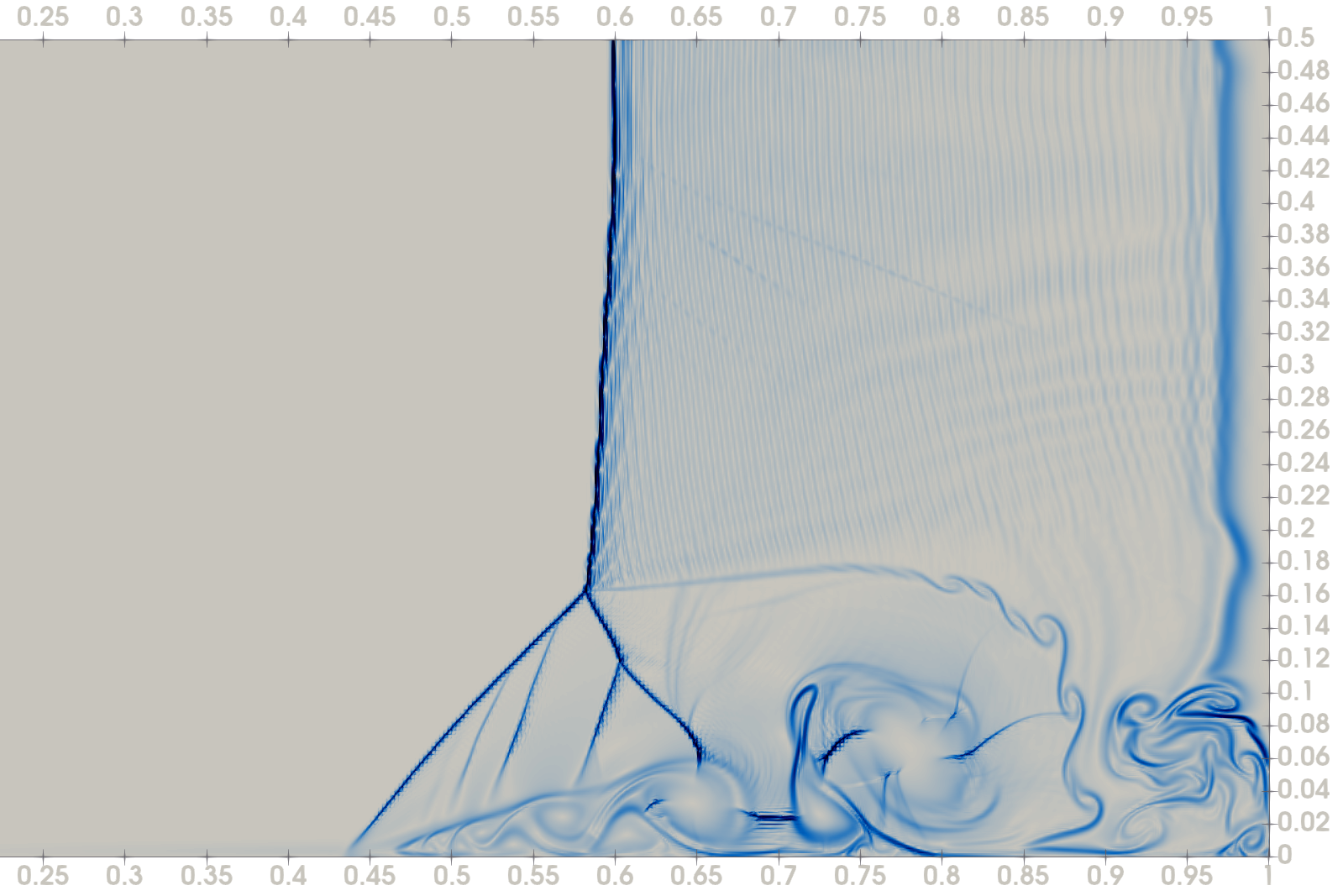}}\hfill
  \subfloat[][]{\includegraphics[width=.3\textwidth]{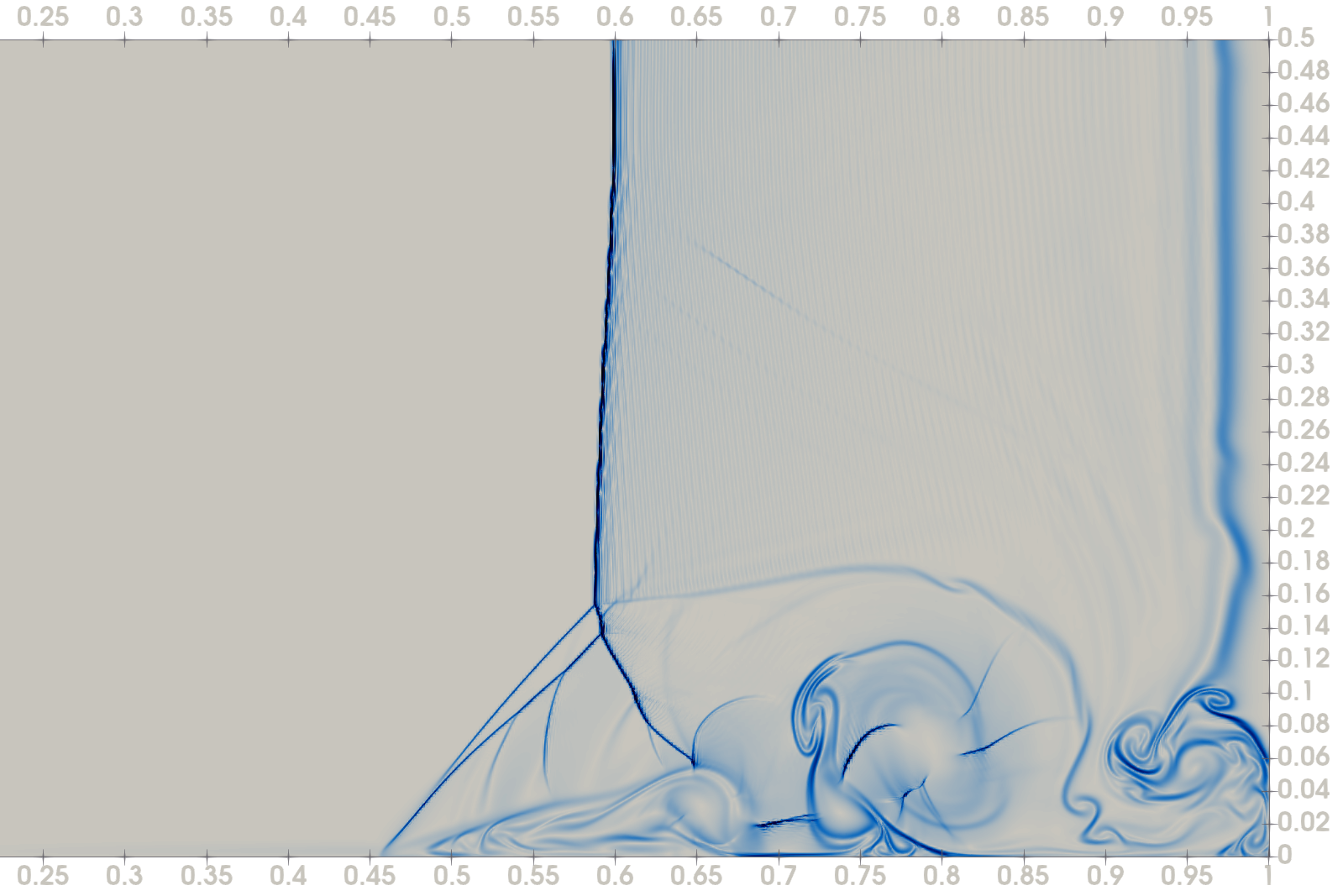}}\par
  \hspace{1.5cm}\raisebox{35pt}{\parbox[b]{.1\textwidth}{$K_{\rm 1D} = 150$}} \hspace{4.5cm} \raisebox{35pt}{\parbox[b]{.1\textwidth}{$K_{\rm 1D} = 200$}} \hspace{4.5cm} \raisebox{35pt}{\parbox[b]{.1\textwidth}{$K_{\rm 1D} = 300$}}
  \caption{Daru-Tenaud shocktube with $Re = 1000$, $\zeta = 0.1$ without shock capturing at $T = 1.0$}
  \label{fig:daru}
\end{figure}
\end{landscape}

Next, we study the sensitivity of Daru-Tenaud shocktube with respect to shock capturing and the strength of the relaxation factor $\zeta$. We run simulations under Reynolds numbers $\text{Re} = 1000$ and $\text{Re}= 10000$ using polynomial order $N = 2$ and $K_{\rm 1D} = 200$. For each configuration, we apply three limiting configurations: $\zeta = 0.1$ without shock capturing, $\zeta = 0.1$ with shock capturing, and $\zeta = 0.5$ without shock capturing. 

For both Reynolds numbers, simulations remain robust, and Figures \ref{fig:daru1000} and \ref{fig:daru10000} suggest that the choice of the relaxation factor $\zeta$ has little impact on the qualitative features. The solution features become more sensitive to the relaxation factor  when the Reynolds number is large, which may be due to the increased sensitivity of the solution as viscosity decreases. In contrast, Figure~\ref{fig:daru1000} demonstrates that even at lower Reynolds numbers, the solution is very sensitive to additional shock capturing. In particular, the qualitative structure of the shocks in the region $[0.45, 0.64] \times [0.04, 0.18]$ and the boundary phenomena in the region $[0.45, 0.64] \times [0.0, 0.04]$ is significantly changed by the addition of shock capturing. Moreover, the results with additional shock capturing appear to be incorrect when compared with fine-grid reference results from \cite{daru2009numerical, guermond2022implementation}.

\begin{figure}[!h]
\centering
\begin{subfigure}{.8\textwidth}
  \centering
  \includegraphics[width=.9\linewidth]{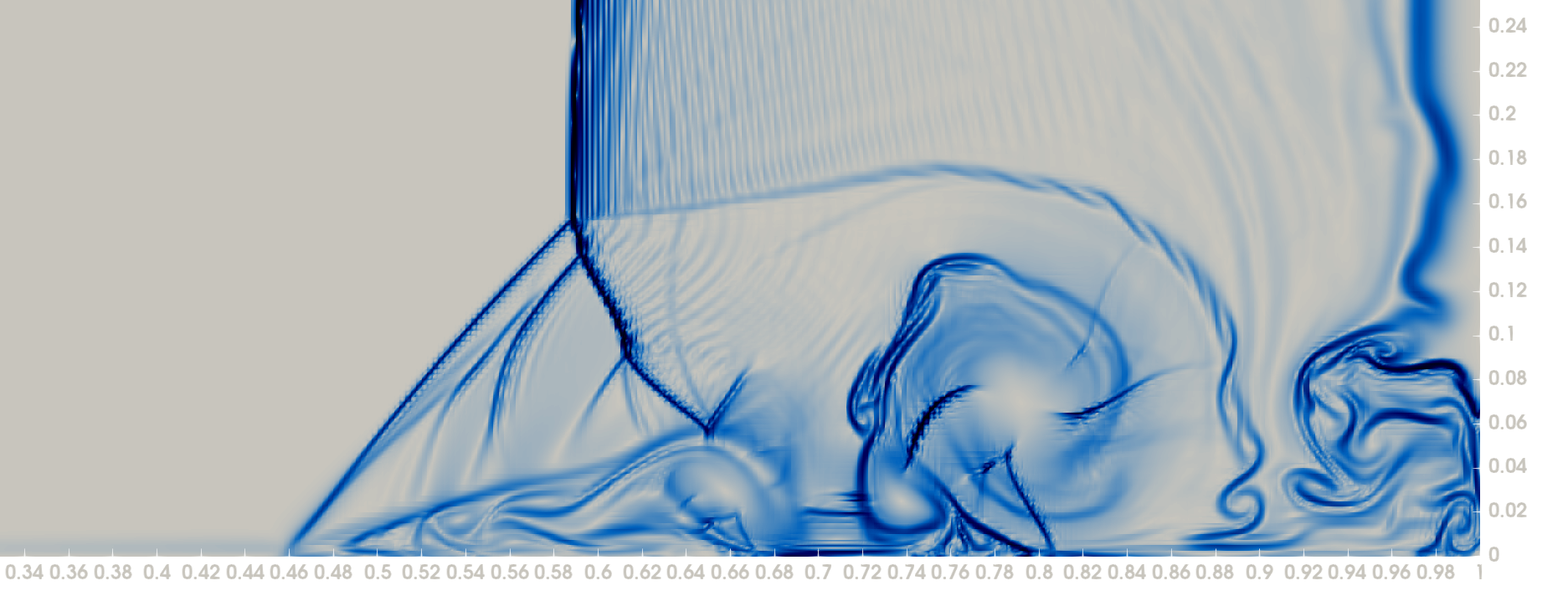}
  \caption{Elementwise (Zhang-Shu type) limiting with $\zeta = 0.1$, without shock capturing}
\end{subfigure}%
\vspace{0.5cm}
\begin{subfigure}{.8\textwidth}
  \centering
  \includegraphics[width=.9\linewidth]{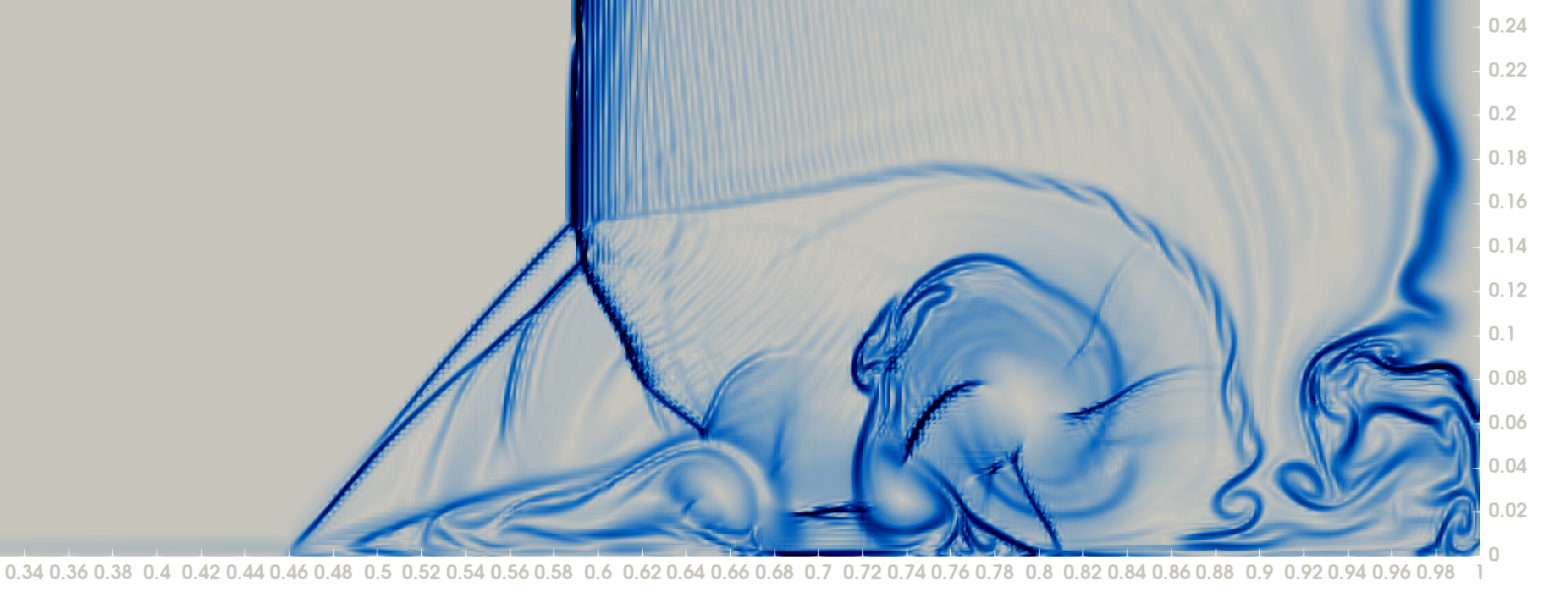}
  \caption{Elementwise (Zhang-Shu type) limiting with $\zeta = 0.5$, without shock capturing}
\end{subfigure}%
\vspace{0.5cm}
\begin{subfigure}{.8\textwidth}
  \centering
  \includegraphics[width=.9\linewidth]{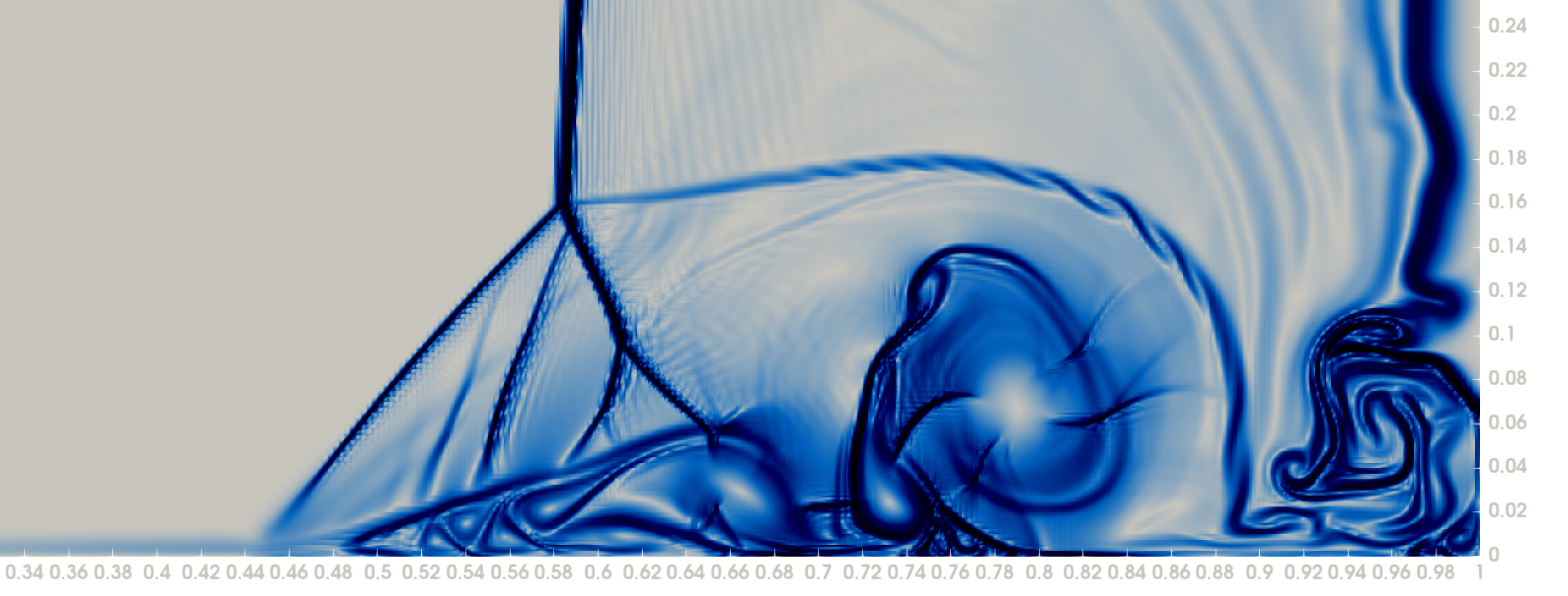}
  \caption{Elementwise (Zhang-Shu type) limiting with $\zeta = 0.1$, with shock capturing}
\end{subfigure}%
\caption{Daru-Tenaud shocktube, $Re = 1000, N = 2, K_{\rm 1D} = 200$}
\label{fig:daru1000}
\end{figure}


\begin{figure}[!htb]
\centering
\begin{subfigure}{.8\textwidth}
  \centering
  \includegraphics[width=.9\linewidth]{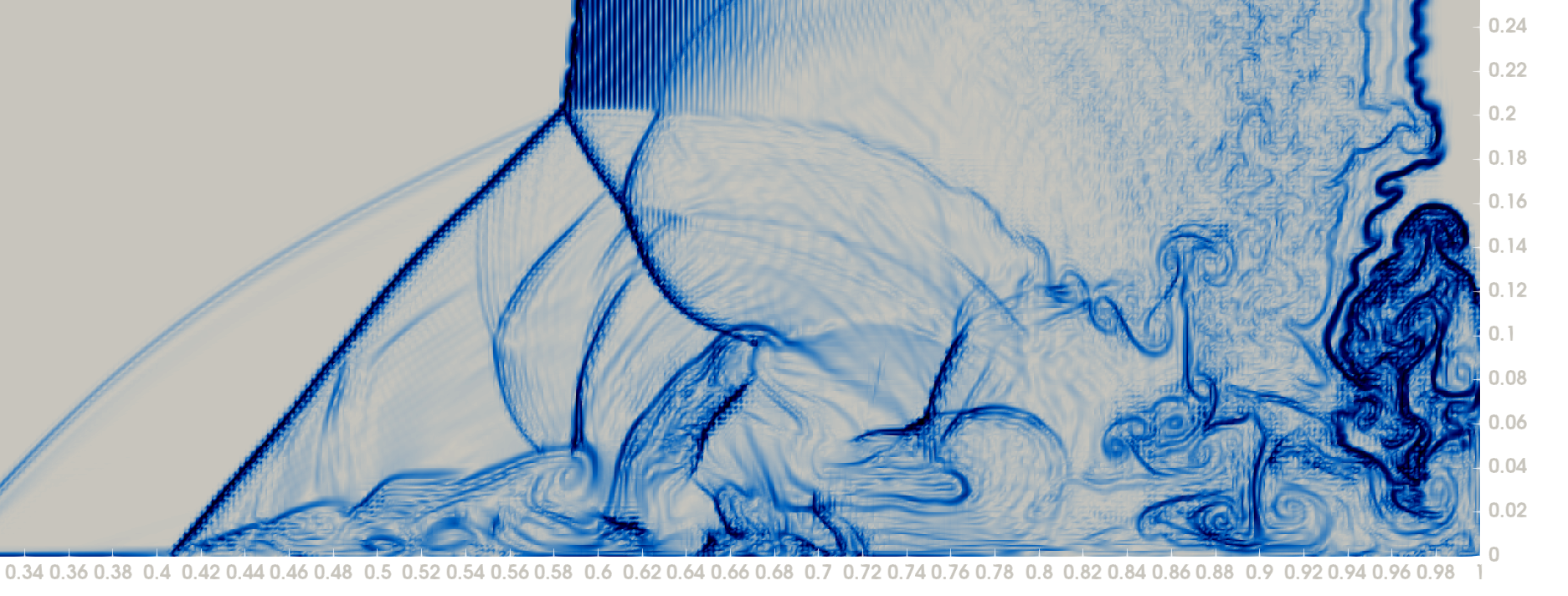}
  \caption{Elementwise (Zhang-Shu type) limiting with $\zeta = 0.1$, without shock capturing}
\end{subfigure}%
\vspace{0.5cm}
\begin{subfigure}{.8\textwidth}
  \centering
  \includegraphics[width=.9\linewidth]{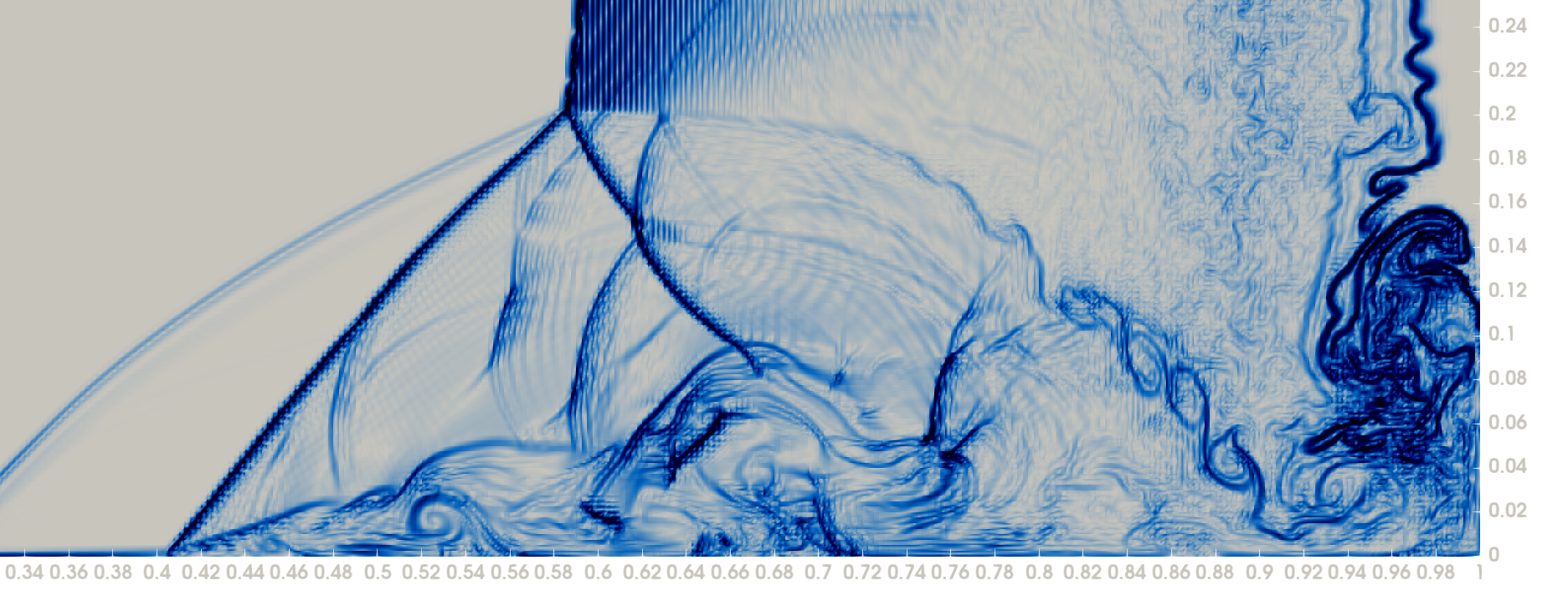}
  \caption{Elementwise (Zhang-Shu type) limiting with $\zeta = 0.5$, without shock capturing}
\end{subfigure}%
\vspace{0.5cm}
\begin{subfigure}{.8\textwidth}
  \centering
  \includegraphics[width=.9\linewidth]{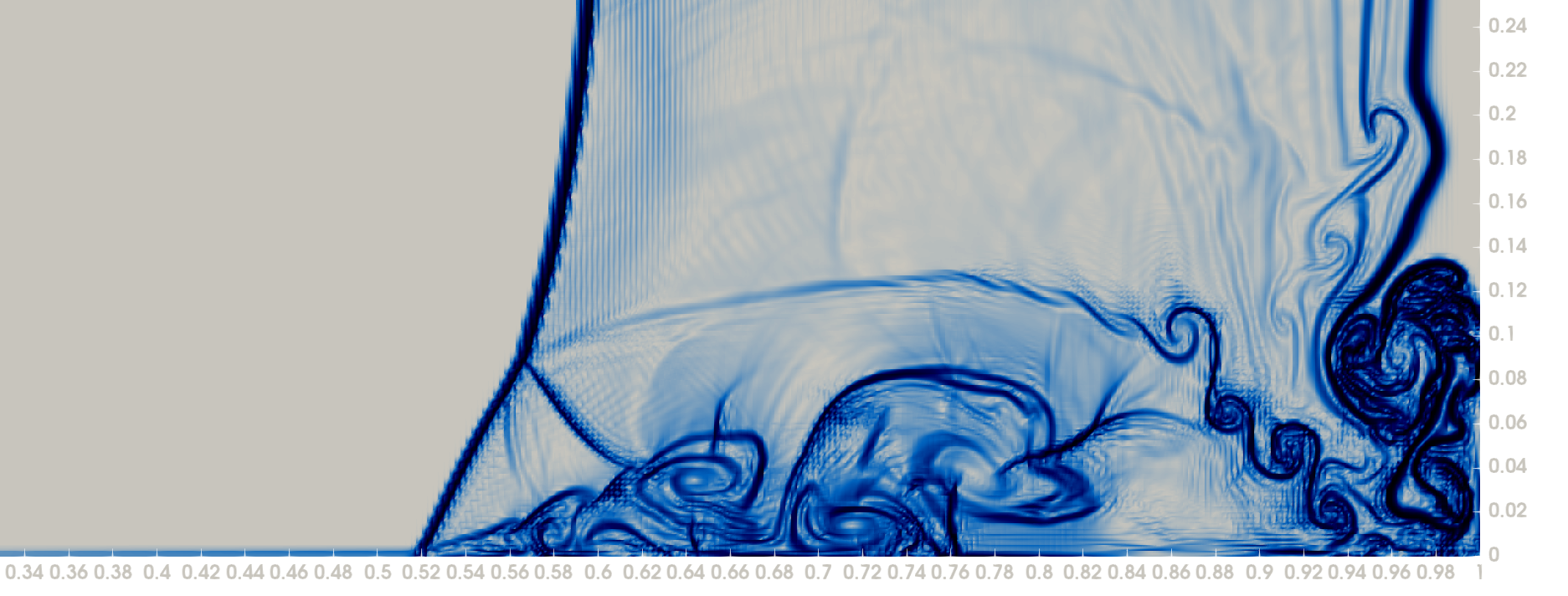}
  \caption{Elementwise (Zhang-Shu type) limiting with $\zeta = 0.1$, with shock capturing}
\end{subfigure}%
\caption{Daru-Tenaud shocktube, $Re = 10000, N = 2, K_{\rm 1D} = 200$}
\label{fig:daru10000}
\end{figure}

}

\section{Conclusion} 
\label{sec:con}

In this paper, we present a positivity-preserving limiting strategy for entropy stable schemes applied to the compressible Navier-Stokes equations. We construct a low order positivity preserving discretization, which is then blended  \rzero{elementwise} with a higher order entropy stable discretization. The proposed limiting scheme preserves positivity of the density and pressure under a CFL condition while retaining conservation and a \rzero{semi-}discrete entropy balance. 

The scheme is purely explicit, and as such is subject to a parabolic time-step restriction. Future work will investigate techniques to ameliorate the parabolic time-step restriction, as well as the extension of this limiting procedure to generalized SBP schemes \cite{chan2019efficient} and modal entropy stable discretizations.

\section*{Acknowledgement}
Yimin Lin and Jesse Chan gratefully acknowledge support from National Science Foundation under awards DMS-1719818 and DMS-CAREER-1943186. This work used the Extreme Science and Engineering Discovery Environment (XSEDE) Expanse at the San Diego Supercomputer Center through allocation TG-MTH200014 \cite{6866038}.
The work of IT was partially supported by the U.S. Department of Energy, Office of Science, Office of Advanced Scientific Computing Research, Applied Mathematics Program and by the U.S. Department of Energy, Office of Science, Office of Advanced Scientific Computing Research and Office of Fusion Energy Sciences, Scientific Discovery through Advanced Computing (SciDAC) program. This manuscript has been authored by National Technology \& Engineering Solutions of Sandia, LLC., under contract DE-NA0003525 with the U.S. Department of Energy/National Nuclear Security Administration. The United States Government retains and the publisher, by accepting the article for publication, acknowledges that the United States Government retains a non-exclusive, paid-up, irrevocable, world-wide license to publish or reproduce the published form of this manuscript, or allow others to do so, for United States Government purposes. 

\appendix

\section{Extension to curved meshes}
\label{app:curved}

While the limiting strategy in this paper has been derived for Cartesian and affine meshes, the algebraic formulation makes it possible to extend the approach to curved meshes. The main steps follow \cite{pazner2021sparse}, which we briefly review here. First, recall that the high order formulation \eqref{eq:ESDGskeweuler} and low order formulations \eqref{eq:lowpp} are posed in terms of  discretization matrices. To extend these formulations to a curved element, it suffices to define appropriate curved discretization matrices. Positivity of the low order scheme then follows under modified conditions on the graph viscosity and time-step size involving these new matrices. 

We construct discretization matrices over each curved ``physical'' element from discretization matrices defined on a reference element. First, we assume that each physical element is the image of the reference element under a differentiable mapping. Let $\bm{x}$ and $\widehat{\bm{x}}$ denote the physical and reference coordinates, respectively. Then, geometric change of variables terms $\pd{{x}_i}{\widehat{{x}}_j}$ can be computed on each element. Outward normal vectors can also be computed from these geometric terms \cite{chan2019discretely}. We note that it is important to compute these geometric terms such that a discrete version of the geometric conservation law 
\[
\pd{}{\widehat{x}_j} \pd{{x}_i}{\widehat{{x}}_j} = 0
\]
is satisfied. For the high order method, this can be done using a curl representation of the geometric terms \cite{chan2019discretely}, while for the low order method this can be done by solving algebraically for a separate set of low order geometric terms \cite{pazner2021sparse, rueda2021subcell}. 

These geometric terms can then be used to construct physical discretization matrices. Let $\fnt{g}_{ij}$ denote the vector containing values of the geometric terms $\pd{{x}_i}{\widehat{{x}}_j}$ at nodal points on a single element, and let $\fnt{J}$ denote the vector of nodal values of the determinant of the Jacobian of the reference-to-physical mapping. Let $\widehat{\fnt{M}}$, $\widehat{\fnt{Q}}_i$, and $\widehat{\fnt{B}}_i$ denote the reference mass matrix, $i$th reference high order differentiation matrix, and $i$th reference boundary matrix. A physical mass matrix can be constructed on each element via
\[
\fnt{M} = \widehat{\fnt{M}} \diag{\fnt{J}}.
\]
Physical high order differentiation matrices can be constructed via
\[
\fnt{Q}_i = \sum_{j=1}^d \frac{1}{2}\LRp{\diag{\fnt{g}_{ij}}\widehat{\fnt{Q}}_j + \widehat{\fnt{Q}}_j \diag{\fnt{g}_{ij}}}.
\]
Physical boundary matrices can be constructed similarly
\[
\fnt{B}_i = \sum_{j=1}^d \widehat{\fnt{B}}_j \diag{\fnt{g}_{ij}}.
\]
It can be shown that these matrices satisfy both summation by parts and conservation conditions if the geometric terms satisfy discrete versions of the GCL \cite{crean2018entropy, chan2019discretely, pazner2021sparse}. The same procedure (with appropriate low order geometric terms) can be used to construct sparse low order physical differentiation matrices. 

\section{Convex limiting strategy}
\label{app:convexlimit}

\rzero{In the appendix, we will explore another limiting technique called convex limiting and provide an alternative approach to limit entropy stable discretizations.} To simplify the notation, we adapt the notation used in \cite{guermond2019invariant}, so we can write the low order and high-order approximations over node $i$ in algebraic forms:
\begin{align}
    \fnt{m}_i \frac{\fnt{u}^{{\low},n+1}_i-\fnt{u}_i}{\tau} + \sum\limits_{j\in \mathcal{I}(i)} \fnt{F}_{ij}^{\low} + \sum\limits_{j \in \mathcal{B}(i)} \fnt{F}_{ij}^{{\rm B},{\low}} &= 0 \label{eq:lppaf}\\
    \fnt{m}_i \frac{\fnt{u}^{{\high},n+1}_i-\fnt{u}_i}{\tau} + \sum\limits_{j\in \mathcal{I}(i)} \fnt{F}_{ij}^{\high} + \sum\limits_{j \in \mathcal{B}(i)} \fnt{F}_{ij}^{{\rm B},{\high}} &= 0 \label{eq:esdgaf}
\end{align}

In particular, for the compressible Navier-Stokes equation, the low and high order algebraic fluxes are
\begin{alignat}{2}
    \fnt{F}^{\low}_{ij} &= \sum\limits_{k=1}^2 \frac{1}{2}\LRp{\fnt{Q}_k^{\low} - \LRp{\fnt{Q}_k^{\low}}^T}_{ij} \LRs{\vec{f}_k\LRp{\fnt{u}_i}+\vec{f}_k\LRp{\fnt{u}_j} - \LRp{\fnt{\sigma}_k}_i - \LRp{\fnt{\sigma}_k}_j} - \lambda_{ij}\LRp{\fnt{u}_j - \fnt{u}_i}, \qquad \label{eq:FijL}\\
    \fnt{F}^{{\rm B}, \low}_{ij} &= \sum\limits_{k=1}^2 \frac{1}{2}\LRp{\fnt{E}^T\fnt{B}_k\fnt{E}}_{ii} \LRs{\vec{f}_k\LRp{\fnt{u}_i} + \vec{f}_k\LRp{\fnt{u}_i^+} - \LRp{\fnt{\sigma}_k}_i - \LRp{\fnt{\sigma}_k}_i^+} - \LRp{\fnt{\lambda}_k}_i \LRp{\fnt{u}_i^+ - \fnt{u}_i} \label{eq:FijBL}\\
    \fnt{F}_{ij}^{\high} &= \sum\limits_{k=1}^2 \rtwo{\LRp{\fnt{Q}_k - \LRp{\fnt{Q}_k}^T}_{ij}} \LRs{\vec{f}_{k,S}\LRp{\fnt{u}_i,\fnt{u}_j}- \frac{\LRp{\fnt{\sigma}_k}_i + \LRp{\fnt{\sigma}_k}_j}{2}}, \qquad \label{eq:FijH}\\ \fnt{F}_{ij}^{{\rm B},\high} &=  \sum\limits_{k=1}^2\LRp{\fnt{E}^T\fnt{B}_k\fnt{E}}_{ii} \LRs{\vec{f}_{k,S}\LRp{\fnt{u}_i, \fnt{u}_i^+} - \frac{\LRp{\fnt{\sigma}_k}_i + \LRp{\fnt{\sigma}_k}_i^+}{2}} - \frac{\vec{w}_i^{ f}\LRp{\fnt{\lambda}_{\max,k}}_i}{2} \LRp{\fnt{u}_i^+ - \fnt{u}_i}, \label{eq:FijBH}
\end{alignat}
and we can derive the low and high order algebraic fluxes for the compressible Euler equation by eliminating the viscous terms. We can establish the relation between the low and high order updates through the low and high order algebraic fluxes:
\begin{equation}
    \fnt{m}_i \fnt{u}_i^{{\high},n+1} = \fnt{m}_i \fnt{u}_i^{{\low},n+1} + \tau \LRp{\sum\limits_{j\in\mathcal{I}(i)} \LRp{\fnt{F}_{ij}^{\low}-\fnt{F}_{ij}^{\high}} + \sum\limits_{j\in\mathcal{B}(i)} \LRp{\fnt{F}_{ij}^{\rm B,L} - \fnt{F}_{ij}^{\rm B,H}}} \label{eq:highinlow}
\end{equation}

\subsection{Convex limiting}

Inspired by flux corrected transport \cite{boris1973flux}, the limited solution can be written as 
\begin{equation}
    \fnt{m}_i \fnt{u}_i^{n+1} = \fnt{m}_i \fnt{u}_i^{{\low},n+1} + \tau \LRp{\sum\limits_{j\in\mathcal{I}(i)} l_{ij}\LRp{\fnt{F}_{ij}^{\low}-\fnt{F}_{ij}^{\high}} + \sum\limits_{j\in\mathcal{B}(i)} l_{ij}\LRp{\fnt{F}_{ij}^{\rm B,L} - \fnt{F}_{ij}^{\rm B,H}}}, \label{eq:limitedrewrite}
\end{equation}
where $l_{ij}$ are limiting parameters in the range $[0,1]$. Following the idea in \cite{guermond2018second}, we can rewrite the limited solution as a convex combination of substates of form $\fnt{u} + l\fnt{P}$:
\begin{align}
    \fnt{u}_i^{n+1} &= \sum\limits_{j\in\mathcal{I}(i)} \lambda_j \LRp{\fnt{u}_i^{\low,n+1}+l_{ij}\fnt{P}_{ij} } + \sum\limits_{j\in\mathcal{B}(i)} \lambda_j \LRp{\fnt{u}_i^{\low,n+1}+l_{ij}\fnt{P}_{ij}^{\rm B} },\label{eq:limitedinconvex}\\
    \fnt{P}_{ij} &= \frac{\tau }{\fnt{m}_i \lambda_j} \LRp{\fnt{F}_{ij}^{\low}-\fnt{F}_{ij}^{\high}},\qquad
    \fnt{P}_{ij}^{\rm B} = \frac{\tau }{\fnt{m}_i \lambda_j} \LRp{\fnt{F}_{ij}^{\rm B,L}-\fnt{F}_{ij}^{\rm B,H}}
    ,\qquad
    \sum\limits_{j \in\mathcal{I}\LRp{i}\bigcup \mathcal{B}\LRp{i}} \lambda_j = 1, \quad \lambda_j > 0, \nonumber
\end{align}
where $\LRc{\lambda_j}_{\mathcal{I}\LRp{i}\bigcup \mathcal{B}\LRp{i}}$ is a set of strictly positive convex coefficients. For example, in the numerical experiments in this work, we will use one of the most obvious choices of convex coefficients: $\lambda_j = \frac{1}{\text{card}\LRp{\mathcal{I}(i)\bigcup \mathcal{B}(i)}}$. In \cite{guermond2019invariant}, other choices of convex coefficients are explored and they do not show significant advantages over the uniform choice. 

Since the admissible set $\mathcal{A}$ is convex, the limited solution lies in the admissible set if every sub-state of the form $\fnt{u}^{\low} + l\fnt{P}$ is admissible. When the limiting parameter $l = 0$, we recover the admissible low order approximation, and when the limiting parameter $l = 1$, we recover the high order entropy-stable approximation, which may not be admissible. In order to stay as close to the high-order scheme as possible, we determine the value of limiting parameters by finding the largest possible $l$ that satisfies the positivity constraints:
\begin{align}
        l_{ij} = 
    \max\LRc{ l \in \LRs{0,1} \ |\ \fnt{u}_i^{{\low},n+1}+l \fnt{P}_{ij} \in \mathcal{A}, \quad \fnt{u}_j^{{\low},n+1}+l \fnt{P}_{ji} \in \mathcal{A}}. \label{eq:lij}
\end{align}

The constraint $\fnt{u}^{\low}_i + l\fnt{P} \in \mathcal{A}$ is a quadratic constraint with respect to $l$. \rzero{As discussed in previous sections, \eqref{eq:lparam} gives the exact solution of the limiting parameters.} The limiting parameters $l_{ij}$ are typically enforced to be symmetric \cite{guermond2019invariant},  \rzero{then the limited solution \eqref{eq:limitedrewrite} is conservative and admissible.}


This limiting strategy is adopted in \cite{guermond2018second, guermond2019invariant, pazner2021sparse}, and can be advantageous because it offers sub-cell ``blending'' of high and low order schemes in addition to any sub-cell resolution provided by the low order positivity-preserving scheme. However, in this work we only enforce minimal positivity conditions (i.e., global positivity of density and internal energy), while previous literature used convex-limiting to enforce stronger constraints (e.g. local bounds-preservation on density and local minimum principle on the specific entropy). 

We note that \rzero{the elementwise (Zhang-Shu type)} limiting strategies considered in this work utilize a single blending parameter on each element, which naturally preserves a semi-discrete cell entropy inequality. In contrast, the convex limiting strategy can not be shown to preserve a semi-discrete cell entropy inequality \rtwo{if the positivity constraint is the only constraint imposed}. The lack of a numerical entropy inequality has been shown to introduce spurious phenomena \cite{harten1976finite}. \rtwo{This can be remedied by also enforcing entropic constraints through the limiting process \cite{rueda2021subcell, kuzmin2022limiter}.}


\subsection{Localization of the limiting parameters}
\label{sec:localization}

Symmetrizing the limiting parameters in \eqref{eq:lij} ensures the conservation property of the limited solution. However a naive implementation of symmetrization requires the exchange of information on neighboring elements. \rzero{We use the local} Lax-Friedrichs type interface fluxes in our high order ESDG formulation, which avoids this exchange of information and localizes the convex limiting procedure. Then, for the compressible Euler and Navier-Stokes equations, the interface numerical fluxes of the high order entropy stable methods are identical to the interface numerical fluxes of the low-order scheme:
\begin{align}
    \fnt{F}^{{\rm B}, {\rm H}}_{ij} = \fnt{F}^{{\rm B}, \low}_{ij} &= \sum\limits_{k=1}^2 \frac{1}{2}\LRp{\fnt{E}^T\fnt{B}_k\fnt{E}}_{ii} \LRs{\vec{f}_k\LRp{\fnt{u}_i} + \vec{f}_k\LRp{\fnt{u}_i^+} - \LRp{\fnt{\sigma}_k}_i - \LRp{\fnt{\sigma}_k}_i^+} - \LRp{\fnt{\lambda}_k}_i \LRp{\fnt{u}_i^+ - \fnt{u}_i}. \label{eq:interfacemod}
\end{align}
Since the low and high order algebraic fluxes are identical on the interface, the limited solution can be written in terms of only low and high order algebraic fluxes in the interior of the element: 
\begin{align}
    \fnt{m}_i \fnt{u}_i^{n+1} = \fnt{m}_i \fnt{u}_i^{{\low},n+1} + \tau \sum\limits_{j\in\mathcal{I}(i)} l_{ij}\LRp{\fnt{F}_{ij}^{\low}-\fnt{F}_{ij}^{\high}}. \label{eq:newlimited}
\end{align}
The high-order method is still entropy stable, as shown in \cite{chen2017entropy,renac2019entropy}.

We note that this localization property is not specific to the Lax-Friedrichs flux, and that convex limiting reduces to a local procedure for any high and low order schemes which share the same interface flux and interface discretization matrices.

\rzero{
\section{2D viscous shockwave}
\label{app:expCNSconv}

We now examine the convergence of the limited solution in 2D for the compressible Navier-Stokes equations using the 1D viscous shockwave in Section \ref{sec:viscshockwave} extruded in y-direction. In other words, the initial condition can be written as 
\begin{align}
    \vec{u} \LRp{x,y,t} &= \begin{bmatrix}
        \rho\LRp{\xi} \\
        \rho\LRp{\xi}\LRp{u_\infty + u \LRp{\xi}}  \\
        0 \\
        \rho\LRp{\xi} \LRp{e \LRp{\xi} + \frac{1}{2} \LRp{u_\infty+u\LRp{\xi}}^2}
    \end{bmatrix},\label{eq:viscshockwave2d}
\end{align}
with variables defined as in \eqref{eq:viscshockwave}, \eqref{eq:viscshockwaveu}. We enforce the boundary condition similarly as in Section \ref{sec:viscshockwave} and set ${\rm CFL} = 0.75$. The parameters we used are $\gamma = 1.4, \mu = 0.01, u_\infty = 0.2, u_L = 1.0, m_0 = 1.0, M_0 = 3$. We present the $L^1$ error and the convergence rate of the elementwise limited solution with $\zeta = 0.1$ on the uniform quadrilateral and triangular meshes defined in Section \ref{sec:vortex}. Table \ref{tab:viscquad1} and \ref{tab:visctri1} show the elementwise limited solutions with $\zeta = 0.1$ have asymptotic convergence rates between $O(h^{N+1/2})$ and $O(h^{N+1})$ on both meshes, which is optimal for smooth solutions. 
}

\begin{table}[!htb]
\centering
\rzero{
\begin{tabular}{|c|c|c |c|c|c|c|c|c|} 
 \hline
 & \multicolumn{2}{|c|}{$N=1$} & \multicolumn{2}{|c|}{$N=2$} & \multicolumn{2}{|c|}{$N=3$} & \multicolumn{2}{|c|}{$N=4$}\\
 \hline
 K & $L^2$ error & Rate & $L^2$ error & Rate & $L^2$ error & Rate & $L^2$ error & Rate \\  
 \hline
 10  & $7.368\times 10^{-2}$ &      & $4.751\times 10^{-2}$ &      & $1.796\times 10^{-2}$ &      & $5.448\times 10^{-3}$ & \\ 
 20  & $3.204\times 10^{-2}$ & 1.20 & $1.007\times 10^{-2}$ & 2.24 & $2.168\times 10^{-3}$ & 3.05 & $1.203\times 10^{-3}$ & 2.18\\
 40  & $1.145\times 10^{-2}$ & 1.48 & $1.349\times 10^{-3}$ & 2.90 & $3.533\times 10^{-4}$ & 2.61 & $1.011\times 10^{-4}$ & 3.57\\
 80  & $2.921\times 10^{-3}$ & 1.97 & $1.976\times 10^{-4}$ & 2.77 & $3.882\times 10^{-5}$ & 3.19 & $3.231\times 10^{-6}$ & 4.97\\
 \hline
\end{tabular}
}
\caption{2D viscous shockwave, quadrilateral mesh - Elementwise (Zhang-Shu type) limiting with $\zeta = 0.1$}
\label{tab:viscquad1}
\end{table}

\begin{table}[!htb]
\centering
\rzero{
\begin{tabular}{|c|c|c |c|c|c|c|c|c|} 
 \hline
 & \multicolumn{2}{|c|}{$N=1$} & \multicolumn{2}{|c|}{$N=2$} & \multicolumn{2}{|c|}{$N=3$} & \multicolumn{2}{|c|}{$N=4$}\\
 \hline
 K & $L^2$ error & Rate & $L^2$ error & Rate & $L^2$ error & Rate & $L^2$ error & Rate \\  
 \hline
 10  & $6.372\times 10^{-2}$ &      & $3.053\times 10^{-2}$ &      & $1.834\times 10^{-2}$ &      & $4.957\times 10^{-3}$ & \\ 
 20  & $2.384\times 10^{-2}$ & 1.42 & $5.663\times 10^{-3}$ & 2.43 & $3.805\times 10^{-3}$ & 2.27 & $9.617\times 10^{-4}$ & 2.37\\
 40  & $7.324\times 10^{-3}$ & 1.70 & $1.308\times 10^{-3}$ & 2.11 & $4.876\times 10^{-4}$ & 2.96 & $7.997\times 10^{-5}$ & 3.59\\
 80  & $2.020\times 10^{-3}$ & 1.86 & $2.163\times 10^{-4}$ & 2.60 & $5.127\times 10^{-5}$ & 3.25 & $2.807\times 10^{-6}$ & 4.74\\
 \hline
\end{tabular}
}
\caption{2D viscous shockwave, simplicial mesh - Elementwise (Zhang-Shu type) limiting with $\zeta = 0.1$}
\label{tab:visctri1}
\end{table}

\newpage
\bibliographystyle{elsarticle-num}
\bibliography{reference.bib}

\end{document}